\title[Least Energy Solutions for Cooperative and  Competitive Schr\"odinger Systems]{Least Energy Solutions for Cooperative and Competitive Schr\"odinger  Systems with Neumann Boundary Conditions}
\author{Simone Mauro}
\author{Delia Schiera}
\author{Hugo Tavares}
\date{\today}
\theoremstyle{plain}
\newtheorem{theorem}{Theorem}[section]
\newtheorem{lemma}[theorem]{Lemma}
\newtheorem{proposition}[theorem]{Proposition}
\theoremstyle{plain}
\theoremstyle{plain}
\newtheorem{remark}[theorem]{Remark}
\newcommand{\N}{\field{N}}
\newcommand{\R}{\field{R}}
\newcommand{\field}[1]{\mathbb{#1}}
\newcommand{\wto}{\rightharpoonup}
\newcommand{\tildeNb}{{\mathcal N}_\beta}
\numberwithin{equation}{section}
\tikzset{%
    symbol/.style={%
        draw=none,
        every to/.append style={%
            edge node={node [sloped, allow upside down, auto=false]{$#1$}}}
    }
}
\tikzset{shorten <>/.style={shorten >=#1,shorten <=#1}}
\begin{document}
\maketitle
\begin{abstract}
We study the following gradient elliptic system with Neumann boundary conditions
\begin{equation*}
-\Delta u + \lambda_1 u = u^3 + \beta uv^2, \  -\Delta v + \lambda_2 v = v^3 + \beta u^2 v \  \text{in } \Omega,\qquad 
\frac{\partial u}{\partial \nu} = \frac{\partial v}{\partial \nu} = 0 \ \text{on } \partial \Omega,
\end{equation*}
where \( \Omega \subset \mathbb{R}^N \) is a bounded \( C^2 \) domain with \( N \leq 4 \), and \( \nu \) denotes the outward unit normal on the boundary. 
We investigate the existence of non-constant least energy solutions in both the cooperative (\( \beta > 0 \)) and the competitive (\( \beta < 0 \)) regimes, considering both the definite  and the indefinite case, namely $\lambda_1,\lambda_2\in\R$. We emphasize that our analysis includes both the subcritical case \( N \leq 3 \) and the critical case \( N = 4 \).
 
 Depending on the values of $\beta,\lambda_1,\lambda_2$, the least energy solution is obtained either via a linking theorem, by minimizing over a suitable Nehari manifold, or by direct minimization on the set  of  all non-trivial weak solutions. Our results and techniques can be also adapted to cover some previously untreated cases for Dirichlet conditions.

\end{abstract}

\noindent \textbf{Keywords:}  Critical and subcritical nonlinearities, gradient elliptic systems, least energy solutions, mixed cooperation and competition, Neumann boundary conditions, semilinear equations.\\
\noindent \textbf{2020 MSC:}  35B33,  	35B38,  35J47, 35J50, 35J57, 35J61

\section{Introduction}\label{introduction} 
 In this paper, we study  the existence of non-constant least energy solutions of the following system 
\begin{equation}\tag{$\mathcal P_\beta$}\label{Pb}
\begin{cases}
-\Delta u+\lambda_1 u=u^3+\beta uv^2&\text{in $\Omega$},\\
-\Delta v+\lambda_2 v=v^3+\beta u^2v&\text{in $\Omega$},\\
\frac{\partial u}{\partial\nu}=\frac{\partial v}{\partial\nu}=0&\text{on $\partial\Omega$},
\end{cases}
\end{equation}
where $\Omega\subset\R^N$ ($N\le4$) is a bounded domain with $\Omega$ a $C^2$ domain, and $\nu$ is the outward unit normal on the boundary. The values of the parameters $\lambda_i\in\R$, $i=1,2$, and $\beta\in\R$, will play a crucial role in our analysis.
The main goal is to find weak solutions $(u,v)$ with minimal energy and $u,v$ non-constant. 

 The system has several physical applications; for instance, it arises in nonlinear optics and in the study of Bose--Einstein condensates \cite{esry1997hartree,malomed2005multidimensional,merle2003stability}. In fact, the complex-valued functions $\psi_1 = e^{i\lambda_1 t} u(x)$ and $\psi_2 = e^{i\lambda_2 t} v(x)$ represent solitary wave solutions (solitons) of the coupled Schr\"odinger system:
\begin{equation*}\tag{$\mathcal S_\beta$}\label{Sb}
\begin{cases}
    -i\frac{\partial \psi_1}{\partial t} - \Delta \psi_1 = |\psi_1|^2 \psi_1 + \beta |\psi_2|^2 \psi_1 & \text{in $\Omega \times (0,+\infty)$}, \\
    -i\frac{\partial \psi_2}{\partial t} - \Delta \psi_2 = |\psi_2|^2 \psi_2 + \beta |\psi_1|^2 \psi_2 & \text{in $\Omega \times (0,+\infty)$}.
\end{cases}
\end{equation*}
The parameter $\beta\in\R$ represents the interaction  between the two waves, with positive or negative values indicating attractive or repulsive interaction, respectively.

From a mathematical point of view,  \eqref{Pb} is a prototype of a variational semilinear elliptic system with weak interactions. The later statement means that the system admits semi-trivial solutions, i.e. $(u,0),(0,v)\not\equiv(0,0)$ with $u,v$  solutions of 
\begin{equation}
\tag{$\mathcal Q_{\lambda}$}
\label{scalar eq}
-\Delta z+\lambda z=z^3\text{ in $\Omega$},\qquad
\frac{\partial z}{\partial\nu}=0\text{ on $\partial\Omega$},
\end{equation}
for $\lambda=\lambda_1$ and $\lambda=\lambda_2$, respectively.
We call $(0,0)$ a trivial solution of \eqref{Pb}; 
solutions $(u, v)$ such that $u, v \not \equiv 0$ will be called \emph{fully non-trivial solutions}.

\subsection{State of the art}
The existing literature provides several results on the scalar equation with Dirichlet boundary conditions, whereas Neumann boundary conditions are much less studied. Throughout this paper,  $\{\mu_k^D(\Omega)\}_{k\geq 1}$ denotes the  eigenvalues of the Laplacian with Dirichlet boundary conditions, while  $\{\mu_k(\Omega)\}_k=\{\mu_k^N(\Omega)\}_{k\geq 1}$ denotes the Neumann Laplacian eigenvalues.

\paragraph{\textbf{The scalar equation with Dirichlet boundary conditions}}
If $N\le3$ and $\lambda>-\mu_1^D(\Omega)$, it is standard to prove  that there exists a weak solution of 
\begin{equation*}
-\Delta z+\lambda z=z^3\text{ in $\Omega$},\qquad
z=0\text{ on $\partial\Omega$},
\end{equation*}
using the Mountain Pass Theorem or the Nehari method, see for instance \cite[Sections 1.4 and 4.2]{willem2012minimax}. Moreover, if $\lambda \le -\mu_1^D(\Omega)$, one can prove the existence of a nontrivial weak solution by means of a linking theorem \cite[Section 2.4]{willem2012minimax}, or alternatively by using the generalized Nehari method \cite[Chapter 4]{szulkin2010method}.  
While the linking theorem ensures the existence of a critical point with suitable geometric properties, the generalized Nehari method allows one to directly characterize a solution with minimal energy among all nontrivial solutions.

In the case $N = 4$, 
 the existence of solutions was first studied in the celebrated paper by Brezis and Nirenberg \cite{brezisnirenberg}, where the authors proved the existence of a (least energy) positive solution for $\lambda \in (-\mu_1^D(\Omega), 0)$. We recall that, for $\lambda \ge 0$, no solution exists in star-shaped domains, due to Pohozaev's identity.
In \cite{ambrosetti1986note} and \cite{capozzi1985existence}, the existence of a sign-changing solution under the weaker assumption $\lambda < 0$, $-\lambda \ne \mu_k^D(\Omega)$ for any $k \ge 1$, was proved. Furthermore, we refer to \cite{SzulkinWethWillem} for the existence of a least energy solution, i.e., a solution with minimal energy. We conclude this paragraph by mentioning the recent paper \cite{sun2025brezis}, where the authors give a complete answer to a conjecture concerning the existence of solutions to the Brezis-Nirenberg problem in dimension 3. We refer to the introduction of this paper for a complete state of the art on the topic.

\paragraph{\textbf{The scalar equation with Neumann boundary conditions}}
We now turn to the case of Neumann boundary conditions, problem \eqref{scalar eq}, which exhibits significant differences. In particular, when $\lambda > 0$, constant solutions exist, given by $z \equiv \pm \sqrt{\lambda}$. In the subcritical case $N \le 3$, the existence of non-constant solutions was proved in \cite{lin1988large, lin2006diffusion} for sufficiently large $\lambda > 0$. On the other hand, when $\lambda \le 0$, constant solutions do not exist, and one can adapt the techniques used in the Dirichlet case to prove the existence of a ground state solution. 

In the critical case $N = 4$, we refer the reader to \cite{adimurthimancini1,adimurthi1990critical, wang1991neumann} for the existence of non-constant solutions when $\lambda > 0$ is sufficiently large, and to \cite{comte1990solutions, comte1991existence} for the case $\lambda \in \mathbb{R}$ (we point out that, for $\lambda\leq 0$, solutions are necessarily sign-changing).  
The main differences with respect to the critical Dirichlet case are that, in the Neumann problem, solutions exist for \emph{all} $\lambda\in \R$, and that the regularity assumption $\partial\Omega \in C^2$ is required in order to handle the loss of compactness. 
\paragraph{\textbf{The system with Dirichlet boundary conditions}}
The system \eqref{Pb} with conditions $u,v=0$  on $\partial \Omega$, in the case $\lambda_1, \lambda_2 > -\mu_1^D(\Omega)$, was initially studied for $\Omega = \mathbb{R}^N$, with $N \le 3$, in \cite{ambrosetti2007standing,maia2006positive,mandel2015minimal,sirakov2007least} (and most of the statements are true also if $\Omega$ is bounded). 
Moreover, \cite{chen2012positive} is one of the first works in the critical case $N=4$, assuming $\lambda_1, \lambda_2 \in (-\mu_1^D(\Omega), 0)$; therein, the authors proved the existence of least energy solutions for large and small $\beta > 0$, and also for negative $\beta$. They also showed nonexistence of solutions for $\lambda_1, \lambda_2 \ge 0$ in star-shaped domains, exploiting Pohozaev's identity. The techniques developed in that work can be applied as well to prove existence in the subcritical case $N \le 3$.

More recently, for large $\beta > 0$, in \cite{clapp2022solutions} the authors proved existence of solutions for all values of $\lambda_1, \lambda_2\in \R$ when $N \le 3$, and for all $\lambda_1, \lambda_2 < 0$ with $-\lambda_1, -\lambda_2 \ne \mu_k^D(\Omega)$, for every $k\in\N$, when $N = 4$.

The case $|\beta| < 1$ and arbitrary $\lambda_1, \lambda_2$ in the subcritical regime was addressed in \cite{xu2025least}.  
However, the case $\beta < -1$ with $\lambda_1, \lambda_2 \le -\mu_1^D(\Omega)$ remains open even in the subcritical case; the only known result is a multiplicity result proved in \cite{noris2010existence}. See Theorem \ref{Estensione Dirichlet} below for our contributions in this case.

 \paragraph{\textbf{The system with Neumann boundary conditions}}
In the Neumann case (problem \eqref{Pb}), which is the main topic of our paper, the only available result appears in \cite{kou2022existence}, where the existence of non-constant solutions was studied in the particular case $\beta = \frac{1}{2}$ and $\lambda_1, \lambda_2 > 0$ sufficiently large. However, we point out that, as consequence of our Proposition \ref{sharp beta*} below, the solution obtained in \cite{kou2022existence} is semi-trivial. Different related systems are also considered in \cite{byeon2017pattern, chabrowski2001neumann, liu2013neumann, wei2008existence, zhang2012multiple}. On the other hand, the competitive case has not been addressed at all.

 \subsection{Main results}
Solutions of \eqref{Pb} correspond to critical points of the energy functional $\mathcal J_\beta :H:=H^1(\Omega)\times H^1(\Omega)\to\R$,
\[
\mathcal J_\beta(u,v)=\frac12\int_\Omega(|\nabla u|^2+\lambda_1 u^2)+\frac12\int_\Omega(|\nabla v|^2+\lambda_2 v^2)-\frac14\int_\Omega(u^4+2\beta u^2v^2+v^4). 
\]
Due to the continuous embedding 
\[
H^1(\Omega)\hookrightarrow L^{4}(\Omega),
\]
the energy functional is well defined on $H$, for $N\le4$. Notice that the embedding is compact if $N\le3$. Correspondingly, if $N\le3$ we say that  the system is subcritical, whereas if $N=4$ it is critical.

As mentioned before, the main aim of this paper is to find, under different assumptions on $\lambda_1,\lambda_2$ and $\beta$, \emph{least energy solutions} and \emph{ground state solutions}, which we define respectively as nontrivial solutions achieving the \emph{least energy level}
\begin{equation}\label{db}
l_\beta:=\inf\left\{\mathcal J_\beta(u,v)\ :\ (u,v)\in H,\ u,v\not\equiv0,\   \mathcal J_\beta'(u,v)=0\right\}
\end{equation}
and \emph{ground state level}:
\begin{equation*}
\label{eb}
g_\beta:=\inf\left\{\mathcal J_\beta(u,v)\ :\ (u,v)\in H\setminus \{(0,0)\},\   \mathcal J_\beta'(u,v)=0\right\}.
\end{equation*}

We highlight that, in the single equation case, the corresponding notions coincide, whereas this is not necessarily the case for systems, due to the presence of semi-trivial solitions.

\smallskip
\paragraph{\textbf{Cooperative case $\beta>0$}}
We first show the following
\begin{theorem}\label{thm ground state}
   Let $N\le4$, $\lambda_1,\lambda_2\in\R$, and $\beta >0$. The level $g_\beta$ is achieved, i.e. there exists a ground state solution of \eqref{Pb}.
\end{theorem}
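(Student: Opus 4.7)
The plan is to take a minimizing sequence $(u_n,v_n)$ of nontrivial critical points and show that, after extracting a subsequence, it converges to a genuine minimizer. The starting point is the Nehari-type identity obtained by testing $\mathcal J_\beta'(u,v)$ against $(u,v)$ itself: at any critical point,
\[
\int_\Omega\bigl(|\nabla u|^2+\lambda_1 u^2+|\nabla v|^2+\lambda_2 v^2\bigr)=\int_\Omega\bigl(u^4+2\beta u^2v^2+v^4\bigr),
\]
so $\mathcal J_\beta(u,v)=\tfrac14\int_\Omega(u^4+2\beta u^2v^2+v^4)\ge 0$, with equality only at $(0,0)$ since $\beta>0$. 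Hence $g_\beta\ge 0$, and the upper bound $g_\beta<+\infty$ comes from any semi-trivial critical point of \eqref{Pb}, say $(z,0)$, where $z$ solves the scalar Neumann problem \eqref{scalar eq}; such a $z$ exists for every $\lambda_1\in\R$ (a constant when $\lambda_1>0$, a non-constant solution otherwise).

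\emph{A priori bounds and subconvergence.} Because $\beta>0$ the cross term $\int u_n^2v_n^2$ is nonnegative, so the identity above combined with $\mathcal J_\beta(u_n,v_n)\to g_\beta$ forces an $L^4\times L^4$ bound on $(u_n,v_n)$. Testing each equation against its own component and using $\int u_n^2\le|\Omega|^{1/2}\|u_n\|_4^2$ promotes this to a uniform bound in $H^1\times H^1$. Rewriting the system in Schr\"odinger form
\[
-\Delta u_n+\lambda_1 u_n=V_n u_n,\qquad V_n:=u_n^2+\beta v_n^2,
\]
and similarly for $v_n$, the coefficient $V_n$ is uniformly bounded in $L^{N/2}(\Omega)$, and a Brezis--Kato bootstrap, valid up to the boundary thanks to $\partial\Omega\in C^2$, upgrades this to uniform bounds in every $L^p(\Omega)$; iterated Schauder estimates then yield a uniform bound in $C^{2,\alpha}(\overline\Omega)$. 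Arzel\`a--Ascoli furnishes a subsequence converging in $C^2$ to a pair $(u,v)\in H$ that solves \eqref{Pb} and satisfies $\mathcal J_\beta(u,v)=g_\beta$.

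\emph{The main obstacle: nontriviality of the limit.} It remains to exclude $(u,v)\equiv(0,0)$. Assume it; then $(u_n,v_n)\to 0$ uniformly. Set $t_n:=\|u_n\|_\infty+\|v_n\|_\infty>0$ and rescale $(\tilde u_n,\tilde v_n):=(u_n,v_n)/t_n$, so $\|\tilde u_n\|_\infty+\|\tilde v_n\|_\infty=1$. The rescaled system
\[
-\Delta\tilde u_n+\lambda_1\tilde u_n=t_n^2 V_n\tilde u_n,\qquad -\Delta\tilde v_n+\lambda_2\tilde v_n=t_n^2 V_n\tilde v_n,
\]
has right-hand sides tending to $0$ in $L^\infty$, and the same regularity chain produces a $C^2$-subsequential limit $(\tilde u,\tilde v)$ with $\|\tilde u\|_\infty+\|\tilde v\|_\infty=1$, satisfying $-\Delta\tilde u+\lambda_1\tilde u=0$ and $-\Delta\tilde v+\lambda_2\tilde v=0$ in $\Omega$ with Neumann boundary conditions. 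Say $\tilde u\not\equiv 0$, so that $\tilde u\in\ker(-\Delta+\lambda_1)$ with Neumann conditions. Since every $u_n$ solves $(-\Delta+\lambda_1)u_n=u_n^3+\beta u_nv_n^2$, the self-adjoint Fredholm orthogonality gives $\int_\Omega(u_n^3+\beta u_nv_n^2)\,\tilde u\,dx=0$ for every $n$; dividing by $t_n^3$ and letting $n\to\infty$,
\[
\int_\Omega\tilde u^4+\beta\int_\Omega\tilde u^2\tilde v^2=0,
\]
which is impossible as $\beta>0$ and $\tilde u\not\equiv 0$. The symmetric argument applied to the $v$-equation excludes the case $\tilde v\not\equiv 0$, contradicting $\|\tilde u\|_\infty+\|\tilde v\|_\infty=1$. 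Therefore the limit $(u,v)$ is nontrivial, hence a ground state achieving $g_\beta$.
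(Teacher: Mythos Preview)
Your argument is correct and elegant for $N\le 3$, and there it genuinely differs from the paper's route: instead of checking a Palais--Smale condition variationally, you obtain compactness by elliptic regularity. In the subcritical range $V_n=u_n^2+\beta v_n^2$ is bounded in $L^3$ (not merely $L^{N/2}$), so the bootstrap does give uniform $W^{2,p}$ and $C^{1,\alpha}$ bounds, Arzel\`a--Ascoli applies, and your rescaling/orthogonality argument for nontriviality is clean.

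The gap is at $N=4$. There $H^1\hookrightarrow L^4$ is critical, so $V_n$ is bounded \emph{only} in $L^{N/2}=L^2$, and a uniform $L^{N/2}$ bound on the potential does \emph{not} yield uniform Brezis--Kato bounds. The bootstrap relies on splitting $V=V^{(1)}+V^{(2)}$ with $\|V^{(1)}\|_{N/2}<\varepsilon$ and $V^{(2)}\in L^\infty$; for a sequence $V_n$ that is bounded but not equi-integrable in $L^{N/2}$, the constants $\|V_n^{(2)}\|_\infty$ need not be uniform in $n$. Concretely, a sequence of solutions concentrating like Aubin--Talenti bubbles is bounded in $H^1\times H^1$ and in $L^4\times L^4$, but blows up in $L^\infty$, so your uniform $C^{2,\alpha}$ bound---and hence the Arzel\`a--Ascoli step---fails. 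This is exactly why the paper must work for the compactness threshold: it proves that $g_\beta<\min\{S^2/(4(1+\beta)),\,S^2/8\}$ (using Cherrier's inequality and the boundary-concentrated test functions of Proposition~\ref{soglia} for $\beta>1$, and $g_\beta\le \min\{L_{\lambda_1},L_{\lambda_2}\}<S^2/8$ for $\beta\in(0,1]$), and only then can one conclude that the minimizing Palais--Smale sequence converges strongly and has nontrivial limit. Your argument skips this energy threshold entirely, and there is no way to recover compactness in dimension $4$ without it.
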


We prove this result by using a linking theorem by Rabinowitz \cite{rabinowitz78linking}, and then minimizing directly $\mathcal{J}_\beta$ in the set of solutions $(u,v)\not \equiv (0,0)$. The linking level is also useful in the critical case $N=4$, as it allows to verify a compactness condition. In doing so, we were inspired by \cite{clapp2022solutions} where, however, only the case $\beta>0$ large is considered, a different linking theorem is used, and further restrictions on $\lambda_1,\lambda_2$ are needed for $N\leq 3$ (as mentioned in the state of the art section).

However, ground state solutions may be semi-trivial or constant. Precisely, 
when $\lambda_1,\lambda_2>0$, following \cite{mandel2015minimal}, one has:
\begin{proposition}\label{sharp beta*}
Let $\lambda_1, \lambda_2, \beta >0$ and $N \le 4$. Let
\begin{equation*}\label{def beta*}
    \beta_*:=\inf_{\substack{(u,v)\in H\\ u\cdot v\not\equiv0}}\frac{(4L)^{-1}B((u,v),(u,v))^2-|u|_4^4-|v|_4^4}{2|uv|_2^2}\ge1,\qquad 
\end{equation*}
where $L:=\min\{L_{\lambda_1},L_{\lambda_2}\}$, $L_{\lambda_i}$ is the least energy level of \eqref{scalar eq} with $\lambda=\lambda_i$ for $i=1,2$, and
\[B((u,v),(u,v)):=\int_\Omega|\nabla u|^2+\lambda_1\int_\Omega u^2+\int_\Omega|\nabla v|^2+\lambda_2\int_\Omega v^2.\]
\begin{itemize}
    \item[$(i)$] Every ground state is semi-trivial for $\beta< \beta_*$.  
    \item[$(ii)$] Every ground state is a least energy  solution for $\beta > \beta_*$. 
\end{itemize}
\end{proposition}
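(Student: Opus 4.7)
The strategy is to relate the quotient appearing in the definition of $\beta_*$ to the energy of critical points and compare it with the semi-trivial threshold $L$.

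First, testing the system with $(u,v)$ itself shows that every nontrivial critical point of $\mathcal{J}_\beta$ satisfies $B((u,v),(u,v))=|u|_4^4+|v|_4^4+2\beta|uv|_2^2$, hence
\[
\mathcal{J}_\beta(u,v)=\tfrac14 B((u,v),(u,v)).
\]
Semi-trivial critical points are of the form $(u,0)$ or $(0,v)$, where the nonzero component solves \eqref{scalar eq} for $\lambda=\lambda_1$ or $\lambda_2$, and the one with least energy has energy exactly $L$; in particular $g_\beta\le L$. Substituting the Nehari relation into the quotient $R(u,v):=\frac{(4L)^{-1}B((u,v),(u,v))^2-|u|_4^4-|v|_4^4}{2|uv|_2^2}$ yields, for every fully non-trivial critical point, the identity
\[
R(u,v)-\beta=\frac{B((u,v),(u,v))\bigl(B((u,v),(u,v))-4L\bigr)}{8L\,|uv|_2^2},
\]
so that $R(u,v)>\beta$ (resp.\ $<\beta$) if and only if $\mathcal{J}_\beta(u,v)>L$ (resp.\ $<L$).

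The bound $\beta_*\ge 1$ would be verified as follows: the scalar Nehari characterization gives $L_{\lambda_i}\le\tfrac{1}{4}B_i(w)^2/|w|_4^4$ where $B_i(w):=\int_\Omega(|\nabla w|^2+\lambda_i w^2)$; hence $B_i(w)\ge 2\sqrt{L}\,|w|_4^2$. Squaring $B=B_1(u)+B_2(v)$, expanding, and using Cauchy--Schwarz $|uv|_2^2\le|u|_4^2|v|_4^2$ then yields $(4L)^{-1}B^2\ge|u|_4^4+|v|_4^4+2|uv|_2^2$, i.e.\ $R(u,v)\ge 1$. Part $(i)$ is then immediate: if $\beta<\beta_*$, the definition of $\beta_*$ gives $R(u,v)\ge\beta_*>\beta$ for every $(u,v)$ with $uv\not\equiv 0$, so by the key identity every fully non-trivial critical point has $\mathcal{J}_\beta(u,v)>L\ge g_\beta$ and cannot be a ground state.

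For part $(ii)$, given $\beta>\beta_*$, select $(u_0,v_0)\in H$ with $u_0v_0\not\equiv 0$ and $R(u_0,v_0)<\beta$. Maximizing the quartic map $t\mapsto\mathcal{J}_\beta(tu_0,tv_0)$ gives a unique $t^*>0$ at which
\[
\mathcal{J}_\beta(t^*u_0,t^*v_0)=\frac{B((u_0,v_0),(u_0,v_0))^2}{4\bigl(|u_0|_4^4+2\beta|u_0v_0|_2^2+|v_0|_4^4\bigr)},
\]
and the assumption $R(u_0,v_0)<\beta$ is \emph{exactly} equivalent to this maximum being strictly below $L$. Feeding this ray into the linking / direct minimization scheme underlying Theorem \ref{thm ground state} produces a nontrivial critical point at a level strictly below $L$; since semi-trivial critical points have energy $\ge L$, this critical point must be fully non-trivial. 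Consequently $g_\beta<L$, and the ground state, which is achieved by Theorem \ref{thm ground state}, is necessarily fully non-trivial. Since $l_\beta\ge g_\beta$ while any fully non-trivial ground state lies in the set over which $l_\beta$ is computed, it follows that $l_\beta=g_\beta$ and every ground state is a least energy solution. The main obstacle lies in the critical case $N=4$: extracting a critical point with energy strictly below $L$ requires Palais--Smale compactness in that range, and it is precisely the strict inequality built from the choice of $(u_0,v_0)$ that rules out concentration at the threshold level $L$.
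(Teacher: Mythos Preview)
Your proposal is correct and follows essentially the same route as the paper: both arguments hinge on the quotient $R(u,v)$ (the paper calls it $\Phi$), the inequality $B_i(w)\ge 2\sqrt{L}\,|w|_4^2$ from the scalar least-energy characterization to get $\beta_*\ge 1$, and in part $(ii)$ testing with a ray through a point where $R<\beta$ to push the ground-state level strictly below $L$. Your ``key identity'' $R(u,v)-\beta=\dfrac{B\,(B-4L)}{8L\,|uv|_2^2}$ is a clean repackaging of the paper's direct computation $\mathcal J_\beta(u^*,v^*)=\tfrac14 B^2/(|u^*|_4^4+2\beta|u^*v^*|_2^2+|v^*|_4^4)$, and the logic of both parts is identical.

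The only place where your write-up is shakier than the paper is the closing paragraph on $(ii)$. You worry about Palais--Smale compactness below the level $L$ in dimension $4$, but this is not needed: the existence of a ground state is already granted by Theorem~\ref{thm ground state}, so the only task is to bound $g_\beta$ strictly below $L$. Since $\lambda_1,\lambda_2>0$ and $\beta>\beta_*\ge 1$, the linking structure collapses to the Mountain Pass and $g_\beta=\widetilde c_\beta=\inf_{\boldsymbol u\ne 0}\max_{t>0}\mathcal J_\beta(t\boldsymbol u)$ (Remark~\ref{Remark Mountain Pass}); hence $g_\beta\le\max_{t>0}\mathcal J_\beta(tu_0,tv_0)<L$ immediately, with no extra compactness argument. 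The paper makes this explicit; your version would benefit from invoking that Mountain Pass characterization directly rather than alluding to a fresh ``linking / direct minimization'' step.
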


Moreover, 
the system \eqref{Pb} may admit constant solutions $(c_1,c_2)$ with $c_1,c_2\ne0$, which are all explicitly given in Proposition \ref{constant solution} in Section \ref{preliminaries}. In particular, for $\beta>0$, there are no non-constant solutions in the following cases: 
\begin{align}
\label{condizione 1.4 costanti}
 &(\lambda_1,\lambda_2)\notin(0,+\infty)^2 \text{ and } \beta>0,\\
 \label{condizione 1.5 costanti}
 &\lambda_1,\lambda_2>0 \text{ and  }\beta\in\left(\min\left\{\frac{\lambda_1}{\lambda_2},\frac{\lambda_2}{\lambda_1}\right\},\max\left\{\frac{\lambda_1}{\lambda_2},\frac{\lambda_2}{\lambda_1}\right\}\right). \end{align}
We establish suitable conditions for $\lambda_1,\lambda_2,\beta$ such that there exists a non-constant least energy solution. Define the constants:
 \begin{align}\label{const thm 1.1}
 K:=\frac{2\pi^{\frac N2}}{N\Gamma(\frac N2)},\quad K_q:=\frac{2\pi^{\frac N2}}{\Gamma(\frac N2)}\int_0^1(1-r)^qr^{N-1}\,dr,\quad q\ge1,\quad M:=\frac{(K+K_2)^2}{4K_4},
 \end{align}
 where $\Gamma(\cdot)$ is the Gamma function.  
 \begin{theorem}\label{Theorem 1.1}
  Let $N\le4$, $\lambda_1,\lambda_2\in\R$ and $\beta>0$.  
    \begin{itemize}
   \item[$(i)$]   There exists $\underline \beta(\lambda_1,\lambda_2)\in (0,1]$ such that, for $\beta<\underline \beta(\lambda_1,\lambda_2)$, the problem admits a  least energy solution, which is not constant in case 
    \eqref{condizione 1.4 costanti}  or \eqref{condizione 1.5 costanti} hold, or if 
\begin{equation}\label{eq non costanti w.c.}
\lambda_1,\lambda_2>0\quad \text{ and } \quad  M\left(\lambda_1^{\frac{4-N}{2}}+\lambda_2^{\frac{4-N}{2}}\right)\le\frac{\lambda_1^2 + \lambda_2^2 - 2\beta\lambda_1\lambda_2}{4(1 - \beta^2)}|\Omega|  
\end{equation}
    is satisfied.
 
  \item[$(ii)$]   
Any ground state solution is a non-constant least energy solution if either: \eqref{condizione 1.4 costanti} holds and $\beta>1$ is large enough (depending on $\lambda_1,\lambda_2$), or if 
 $ \lambda_1, \lambda_2 >0$, $\beta\ge\max\left\{\frac{\lambda_1}{\lambda_2},\frac{\lambda_2}{\lambda_1}\right\} $ and
\begin{equation}\label{eq least energy non constant}
\frac{(2K+K_2)^2}{8K_4(1+\beta)}(\lambda_1+\lambda_2)^{\frac{4-N}{2}}< \min\left\{\frac{C_S^2\min\{1,\lambda_1^2,\lambda_2^2\}}{4}, \frac{\lambda_1^2 + \lambda_2^2 - 2\beta\lambda_1\lambda_2}{4(1 - \beta^2)}|\Omega|\right\},
\end{equation}
where $C_S$ is the best constant for the embedding $ H^1(\Omega) \hookrightarrow L^4(\Omega)$, see \eqref{Sobolev constant H1} below.
\end{itemize}
\end{theorem}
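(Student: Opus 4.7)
The plan is to separate existence from non-constancy, in each case reducing the latter to an energy comparison with constant solutions.

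Existence: In part (i), for $\beta$ below a threshold $\underline\beta(\lambda_1,\lambda_2) \in (0,1]$, I would minimize $\mathcal J_\beta$ on the Nehari-type set
$$
\mathcal N_\beta = \{(u,v)\in H : u,v\not\equiv 0,\ \mathcal J_\beta'(u,v)[(u,0)] = \mathcal J_\beta'(u,v)[(0,v)] = 0\},
$$
on which $\mathcal J_\beta(u,v) = \frac14\int_\Omega(u^4+2\beta u^2v^2+v^4)$ is non-negative. The bound $\underline\beta \le 1$ is needed to ensure the fibering map $(t,s)\mapsto (tu,sv)$ has a unique projection onto $\mathcal N_\beta$; the further shrinking in the $\lambda_i$ compensates the loss of coercivity of the quadratic forms when $\lambda_i < 0$. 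A standard Ekeland/Palais--Smale argument then produces a minimizer; in the critical case $N=4$, compactness at the minimax level requires $l_\beta$ to lie below a critical threshold tied to $C_S$, which accounts for the first entry of the minimum in \eqref{eq least energy non constant}. In part (ii), the existence of a ground state is Theorem \ref{thm ground state}; the parameter ranges force $\beta > \beta_*$, so by Proposition \ref{sharp beta*} every ground state is in fact a least energy solution, $g_\beta = l_\beta$.

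Non-constancy: Using the identities $\lambda_i c_i^2 = c_i^4 + \beta c_1^2 c_2^2$ satisfied by any fully non-trivial constant solution $(c_1,c_2)$ with $c_i^2 = (\lambda_i - \beta\lambda_{3-i})/(1-\beta^2)$, a direct computation gives
$$
\mathcal J_\beta(c_1,c_2) = \frac{\lambda_1^2 + \lambda_2^2 - 2\beta\lambda_1\lambda_2}{4(1-\beta^2)}|\Omega|,
$$
precisely the RHS of \eqref{eq non costanti w.c.} and the second entry in the minimum in \eqref{eq least energy non constant}. Such constants exist only when both $c_i^2 > 0$, which fails under \eqref{condizione 1.4 costanti} or \eqref{condizione 1.5 costanti}; in those subcases any fully non-trivial solution (semi-trivial ones being excluded by the definition of $l_\beta$) is automatically non-constant. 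In the remaining subcases one must show strictly $l_\beta < \mathcal J_\beta(c_1,c_2)$ by producing a sharp upper bound on $l_\beta$ via a well-chosen test function.

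In part (i), a test pair built from scalar least-energy test functions for \eqref{scalar eq} at $\lambda=\lambda_1$ and $\lambda=\lambda_2$ placed with essentially disjoint supports in $\Omega$, projected onto $\mathcal N_\beta$, yields $l_\beta \le L_{\lambda_1} + L_{\lambda_2}$; combined with the half-bubble bound $L_\lambda \le M\lambda^{(4-N)/2}$ (the constants \eqref{const thm 1.1} coming from truncated Talenti bubbles placed at a point of $\partial\Omega$), this produces precisely \eqref{eq non costanti w.c.}. In part (ii), the natural symmetric test function of the form $(\alpha U_\epsilon, \alpha U_\epsilon)$, projected onto the appropriate Nehari set and optimised in $\alpha > 0$ (with optimal value depending on $1+\beta$), yields the LHS of \eqref{eq least energy non constant} with the prefactor $(2K+K_2)^2/(8K_4(1+\beta))$ emerging precisely from that optimisation.

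The main technical obstacle will be the sharp evaluation of the bubble test functions in the critical dimension $N=4$: the boundary positioning of half-bubbles at a point of $\partial\Omega$ and the explicit expansions of each of $K$, $K_2$, $K_4$ must be tracked with enough precision for the strict inequalities in \eqref{eq non costanti w.c.} and \eqref{eq least energy non constant} to close, while simultaneously ensuring that the Palais--Smale compactness threshold tied to $C_S$ is respected.
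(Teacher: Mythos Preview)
Your overall architecture is right, but several load-bearing steps are misidentified.

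First, in part~(i) your Nehari set is too small when some $\lambda_i\le 0$: the two constraints $\mathcal J_\beta'(u,v)[(u,0)]=\mathcal J_\beta'(u,v)[(0,v)]=0$ do not by themselves give a manifold on which $\mathcal J_\beta$ is coercive, because the quadratic forms $B_i$ are indefinite. The paper works instead with the larger constraint set that additionally imposes $\mathcal J_\beta'(u,v)[\tilde{\boldsymbol w}]=0$ for every $\tilde{\boldsymbol w}\in\tilde H=\tilde H_1\times\tilde H_2$ (the non-positive eigenspaces). This is what allows the Szulkin--Weth type characterization \eqref{caratterizzazione l.e.s. scalare} to be used, and in particular the bound $m_\beta\le L_{\lambda_1}+L_{\lambda_2}$ follows simply by dropping the (negative, since $\beta>0$) coupling term $-\frac\beta2\int(tu_1+\tilde w_1)^2(su_2+\tilde w_2)^2$ and decoupling into two scalar problems; no ``disjoint supports'' are needed or used, and indeed scalar least energy solutions need not be localizable when $\lambda_i\le 0$.

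Second, you misread the role of $C_S$ in \eqref{eq least energy non constant}. It is \emph{not} a compactness threshold for $N=4$; the critical compactness threshold in this paper is $\frac{S^2}{4(1+\beta)}$ with $S$ the Sobolev constant on $\mathbb R^4$, handled separately via bubble estimates (Lemma~\ref{Estimates for C}, Proposition~\ref{soglia}). The quantity $\frac{C_S^2\min\{1,\lambda_1^2,\lambda_2^2\}}{4}$ is instead an explicit \emph{lower} bound on $L=\min\{L_{\lambda_1},L_{\lambda_2}\}$, used in part~(ii) to guarantee $g_\beta<L$ so that the ground state cannot be semi-trivial. Relatedly, your appeal to Proposition~\ref{sharp beta*} in part~(ii) fails under \eqref{condizione 1.4 costanti}, since that proposition assumes $\lambda_1,\lambda_2>0$; the paper instead shows directly that the linking level $c_\beta\to 0$ as $\beta\to\infty$ (using, for $N\le 3$, a finite-dimensional argument on the linking set $Q$), whence $g_\beta<L$ for $\beta$ large.

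Third, the constants $K,K_2,K_4,M$ do not come from Talenti bubbles but from the Lin--Ni--Takagi piecewise-linear test function $\varphi_\varepsilon(x)=\varepsilon^{-N/2}(1-\varepsilon^{-1/2}|x|)_+$; testing the mountain-pass level with $(\varphi_\varepsilon,\varphi_\varepsilon)$ and optimizing $\varepsilon=(\lambda_1+\lambda_2)^{-1}$ gives exactly the left side of \eqref{eq least energy non constant}. Your ``truncated Talenti bubble'' description would produce different constants and would not match the statement.
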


\begin{remark}
    For a deep analysis about which $\lambda_1,\lambda_2,\beta$ satisfy conditions in $(i)$-$(ii)$, see Remarks \ref{remark constants iii} and \ref{remark constants i} below. See also Remark \ref{rem:lambda_1=lambda_2} for the particular case $\lambda_1=\lambda_2$.
\end{remark}

Regarding the proof of Theorem \ref{Theorem 1.1}, for $\beta$ small, according to Proposition \ref{sharp beta*}, a ground state solution is semi-trivial; thus, in the proof of $(i)$ we use a Nehari-type set (that takes into account the fact that $\lambda_i$ might be negative) to find directly a least energy solution, in the spirit of \cite{xu2025least}. For the proof of $(ii)$, we use instead Theorem \ref{thm ground state}, proving with further energy estimates that ground states are fully nontrivial and non-constant.

\smallskip 
\paragraph{\textbf{Competitive case $\beta<0$}}
 When \(\beta < 0\), one can exploit the same technique of the weakly cooperative case ($\beta>0$ small). However, it is difficult to prove that the Nehari-type set is a $C^1$-manifold and a natural constraint for every $\lambda_1,\lambda_2\in\R$. We prove that this is true if either $\lambda_1,\lambda_2>0$, or if $\lambda_1>0,\lambda_2=0$ and $N\le3$. Our results are as follow.

\begin{theorem}\label{Theorem 1.2}
Let \( N \leq 4 \), \( \lambda_1, \lambda_2 > 0\), and assume $\beta <0$. Then the system \eqref{Pb} admits a least energy (positive) solution.
Furthermore, every least energy solution is non-constant if either $\beta\leq -1$, or 
\begin{equation}\label{eq:Th.1.6_cond}
\beta \in (-1, 0)\ \text{ and } M(\lambda_1^{\frac{4-N}{2}} + \lambda_2^{\frac{4-N}{2}} + 2|\beta|\lambda_1^{\frac{4-N}{4}} \lambda_2^{\frac{4-N}{4}} )
< 
\lambda_1^2 + \lambda_2^2 + 2|\beta| \lambda_1 \lambda_2.
\end{equation}
\end{theorem}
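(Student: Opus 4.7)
The plan is to obtain a positive least energy solution by minimizing $\mathcal{J}_\beta$ over a Nehari-type set encoding full nontriviality, and then to exclude constant critical points by a direct energy comparison. Since $\lambda_1,\lambda_2>0$, the bilinear form $\|u\|_{\lambda_i}^2 := \int_\Omega(|\nabla u|^2 + \lambda_i u^2)$ is equivalent to the $H^1$ norm. Define
\[\mathcal{N}_\beta := \Bigl\{(u,v)\in H : u,v\not\equiv 0,\ \|u\|_{\lambda_1}^2 = \int_\Omega u^4 + \beta\!\int_\Omega u^2v^2,\ \|v\|_{\lambda_2}^2 = \int_\Omega v^4 + \beta\!\int_\Omega u^2v^2\Bigr\};\]
on this set $\mathcal{J}_\beta(u,v) = \tfrac14(\|u\|_{\lambda_1}^2 + \|v\|_{\lambda_2}^2)$. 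A first step is to show that any $(u,v)$ with $u,v\not\equiv 0$ admits a unique positive Nehari rescaling $(su,tv)\in\mathcal{N}_\beta$: the constraints form a linear system in $(s^2,t^2)$ whose matrix has diagonal $(\int u^4,\int v^4)$ and off-diagonal $\beta\int u^2v^2$, and Cauchy-Schwarz bounds its determinant below by $(1-\beta^2)\int u^4\int v^4$. For $|\beta|<1$ this is strictly positive; for $|\beta|\ge 1$ one restricts to the open dense subset where $u^2,v^2$ are not proportional, which suffices for the minimization. Standard arguments then show that $\mathcal{N}_\beta$ is a $C^1$ manifold and a natural constraint.

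\textbf{Compactness and existence.} Let $\ell := \inf_{\mathcal{N}_\beta}\mathcal{J}_\beta$. The first constraint combined with $\beta<0$ and Sobolev gives $\|u\|_{\lambda_1}^2 \le \int_\Omega u^4 \le C\|u\|_{\lambda_1}^4$, hence $\|u\|_{\lambda_1},\|v\|_{\lambda_2}\ge c_0>0$ uniformly on $\mathcal{N}_\beta$, and in particular $\ell>0$. A minimizing sequence $(u_n,v_n)$ is bounded in $H$; extract a weak limit $(u,v)$. In the subcritical case $N\le 3$, the compact embedding $H^1\hookrightarrow L^4$ passes all quartic terms to the limit and the uniform $L^4$ lower bound forces $u,v\not\equiv 0$, so weak lower semicontinuity gives $(u,v)\in\mathcal{N}_\beta$ with $\mathcal{J}_\beta(u,v)=\ell$. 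In the critical case $N=4$, the key additional step is to prove a strict upper bound $\ell<\ell_*$ below the loss-of-compactness threshold; I would do this by testing with a concentrating Aubin-Talenti bubble (projected onto $\mathcal{N}_\beta$) and exploiting $\lambda_1,\lambda_2>0$ to extract a Brezis-Nirenberg-style lower-order gain. Since $(|u|,|v|)$ lies in $\mathcal{N}_\beta$ with the same energy, we may assume $u,v\ge 0$; the equation $-\Delta u + (\lambda_1 + |\beta|v^2)u = u^3\ge 0$ and the Neumann strong maximum principle then force $u,v>0$. As every fully nontrivial weak solution lies in $\mathcal{N}_\beta$, we conclude $\ell = l_\beta$.

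\textbf{Non-constancy.} Any fully nontrivial constant solution $(c_1,c_2)$ satisfies $c_i^2 = (\lambda_i - \beta\lambda_{3-i})/(1-\beta^2)>0$, which requires $\beta\in(-1,1)$; hence for $\beta\le -1$ no such constant exists and our positive solution is automatically non-constant. For $\beta\in(-1,0)$ a direct computation yields
\[\mathcal{J}_\beta(c_1,c_2) = \frac{|\Omega|}{4(1-\beta^2)}\bigl(\lambda_1^2 + \lambda_2^2 + 2|\beta|\lambda_1\lambda_2\bigr),\]
and it suffices to show $\ell$ is strictly smaller. I would test with truncated cones $\psi_r(x):=(r-|x-x_0|)_+$ on balls $B_{r_i}(x_i)\subset\Omega$ of radii $r_i\sim\lambda_i^{-1/2}$, considering $(a\psi_{r_1},b\psi_{r_2})$, fixing $(a,b)$ via the two Nehari constraints, and computing via the closed-form identities $\int\psi_r^q = r^{N+q}K_q$ and $\int|\nabla\psi_r|^2 = r^N K$. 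After optimizing in $r_i$, the resulting upper bound takes the shape $\ell\le M(\lambda_1^{(4-N)/2}+\lambda_2^{(4-N)/2}+2|\beta|\lambda_1^{(4-N)/4}\lambda_2^{(4-N)/4})$, and the strict inequality $\ell<\mathcal{J}_\beta(c_1,c_2)$ under \eqref{eq:Th.1.6_cond} excludes constant minimizers.

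\textbf{Main obstacle.} The principal technical difficulty lies in the critical case $N=4$: recovering compactness of minimizing sequences requires the strict upper bound $\ell<\ell_*$, whose derivation is delicate and depends on the geometry of $\Omega$ together with $\lambda_1,\lambda_2>0$. A subsidiary point is the degeneracy of the Nehari projection when $\beta\le -1$, resolved by restricting to pairs with $u^2,v^2$ not proportional; any minimizer automatically lies in this open dense subset, since otherwise the two Nehari constraints would force $(u,v)$ to degenerate.
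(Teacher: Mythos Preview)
Your overall architecture matches the paper's: minimize on $\mathcal N_\beta$, use Ekeland to get a Palais--Smale sequence, recover compactness, then compare with the constant energy. The gap is in the critical case $N=4$. You propose to beat the compactness threshold ``by testing with a concentrating Aubin--Talenti bubble \ldots\ and exploiting $\lambda_1,\lambda_2>0$ to extract a Brezis--Nirenberg-style lower-order gain.'' Neither ingredient is right. First, in the Neumann problem the sign of $\lambda_i$ is the \emph{wrong} one for a Brezis--Nirenberg gain: the mass contribution $\lambda_i|w_\varepsilon|_2^2=O(\varepsilon|\log\varepsilon|)$ enters with a plus sign in the quadratic part and is anyway lower order than $\sqrt\varepsilon$. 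The strict inequality comes instead from concentrating $w_\varepsilon$ at a boundary point of positive mean curvature, which produces the Adimurthi--Mancini expansions $|\nabla w_\varepsilon|_2^2=\tfrac{S^2}{2}-C_1\sqrt\varepsilon+o(\sqrt\varepsilon)$ and $|w_\varepsilon|_4^4=\tfrac{S^2}{2}-C_2\sqrt\varepsilon+o(\sqrt\varepsilon)$ with $C_1>C_2$ (Lemma~\ref{Estimates for C}). Second, the relevant threshold is not a pure-bubble level but $\min_i\{L_{\lambda_i}+\tfrac{S^2}{8}\}$, because the dangerous loss of compactness is the \emph{semi-trivial} one: one component converges to a scalar least energy solution while the other bubbles (Theorem~\ref{Compactness beta<0}, Step~2). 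Accordingly, the paper tests with the pair $(t u_1, s w_\varepsilon)$, where $u_1$ is a scalar least energy solution, and computes $\mathcal J_\beta(t u_1, s w_\varepsilon)=L_{\lambda_1}+\tfrac{S^2}{8}-C\sqrt\varepsilon+o(\sqrt\varepsilon)$. A bubble-in-both-components test would instead give level $\tfrac{S^2}{4(1+\beta)}>\tfrac{S^2}{4}>L_{\lambda_i}+\tfrac{S^2}{8}$ for $\beta\in(-1,0)$, and is not even defined for $\beta\le-1$.

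For the non-constancy step your approach differs from the paper's but is salvageable. The paper does not build cones: it projects the pair $(u_1,u_2)$ of scalar least energy solutions onto $\mathcal N_\beta$ via the explicit formulas \eqref{eq t}, bounds the interaction by $\int u_1^2u_2^2\le|u_1|_4^2|u_2|_4^2=4\sqrt{L_{\lambda_1}L_{\lambda_2}}$, and then uses $L_{\lambda_i}\le M\lambda_i^{(4-N)/2}$. Your disjoint-support cones would make $\int u^2v^2=0$, so the $2|\beta|$ cross term you write in your upper bound could not arise from your own construction; what disjoint supports actually give is the stronger bound $m_\beta\le M(\lambda_1^{(4-N)/2}+\lambda_2^{(4-N)/2})$, which still implies the conclusion under \eqref{eq:Th.1.6_cond}. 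So this part works, just not as you stated it.
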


Observe that \eqref{eq:Th.1.6_cond} is satisfied whenever $\beta\in (-1,0)$ is fixed, and either $\lambda_1$ or $\lambda_2$ is large enough.

\begin{theorem}\label{main result 4}
Let $N\le3$, $\lambda_1=:\lambda>0$ and $\lambda_2=0$, and assume $\beta <0$. There exists a least energy solution $(u,v)$ for \eqref{Pb} for $|\beta|$ large enough, which is not constant. 
\end{theorem}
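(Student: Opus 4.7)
The plan is to adapt the Nehari-manifold approach of Theorem~\ref{Theorem 1.2} to the degenerate setting $\lambda_2=0$, where two new difficulties arise: the $v$-component lacks a coercive quadratic term, and for $|\beta|$ large the Nehari constraint is non-degenerate only on pairs that are sufficiently segregated. I would introduce
\[
\mathcal N_\beta := \{(u,v)\in H : u,v\not\equiv 0,\ \mathcal J_\beta'(u,v)[(u,0)]=0,\ \mathcal J_\beta'(u,v)[(0,v)]=0\},
\]
and observe that for every $(u,v)\in H$ with $u\not\equiv 0$, $\|\nabla v\|_2>0$, and $\int_\Omega u^4\int_\Omega v^4>\beta^2\bigl(\int_\Omega u^2v^2\bigr)^2$, there are unique $t,s>0$ with $(tu,sv)\in\mathcal N_\beta$, obtained by solving via Cramer the $2\times 2$ linear system in $(t^2,s^2)$ that defines membership (using $\beta<0$ to ensure positivity of both). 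The same determinant condition makes $\mathcal N_\beta$ a $C^1$-manifold on which $\mathcal J_\beta$ is a natural constraint, with
\[
\mathcal J_\beta(u,v) = \tfrac14\bigl(\|u\|_\lambda^2 + \|\nabla v\|_2^2\bigr),\qquad \|u\|_\lambda^2:=\int_\Omega(|\nabla u|^2+\lambda u^2),
\]
so that $c_\beta:=\inf_{\mathcal N_\beta}\mathcal J_\beta>0$, and any minimizer is a least energy solution of \eqref{Pb}.

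The next step is compactness of a minimizing sequence $(u_n,v_n)\subset\mathcal N_\beta$. The energy bound together with $\lambda>0$ gives $(u_n)$ bounded in $H^1(\Omega)$ and $(\nabla v_n)$ bounded in $L^2$; I would control the mean $\bar v_n:=|\Omega|^{-1}\int_\Omega v_n$ by exploiting the second Nehari identity
\[
\|\nabla v_n\|_2^2+|\beta|\int_\Omega u_n^2 v_n^2=\int_\Omega v_n^4.
\]
Decomposing $v_n=\bar v_n+w_n$ with $w_n$ of zero mean (hence bounded in $H^1$ by Poincaré--Wirtinger) and expanding both sides, the assumption $|\bar v_n|\to\infty$ forces the dominant balance $\bar v_n^4|\Omega|\sim|\beta|\bar v_n^2\|u_n\|_2^2$, so $\bar v_n^2$ is bounded in terms of $|\beta|$ and $\|u_n\|_2^2$, a contradiction. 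Hence $(v_n)$ is bounded in $H^1$, and the compact embedding $H^1(\Omega)\hookrightarrow L^4(\Omega)$ (valid for $N\le 3$) yields $(u_n,v_n)\rightharpoonup(u_\infty,v_\infty)$ in $H$ and strongly in $L^4\times L^4$ up to a subsequence, with the Nehari identities passing to the limit.

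It then remains to show that the limit is fully non-trivial and non-constant. If $u_\infty\equiv 0$, the limit of the Nehari identity for $v$ gives $\|\nabla v_\infty\|_2^2=\|v_\infty\|_4^4$, i.e.\ $v_\infty$ lies on the scalar Nehari manifold for $-\Delta v=v^3$ with Neumann boundary, so lower semicontinuity yields $c_\beta\ge L_0$, where $L_0$ denotes the least energy of that scalar problem; symmetrically, $v_\infty\equiv 0$ gives $c_\beta\ge L_\lambda$. The decisive step is therefore the strict inequality $c_\beta<\min\{L_\lambda,L_0\}$ for $|\beta|$ large, which I would obtain by exhibiting test pairs $(\phi,\psi)\in\mathcal N_\beta$ with essentially disjoint supports: choose a partition $\Omega=A\cup B$ suitably, take $\phi\in H_0^1(A)\hookrightarrow H^1(\Omega)$ a rescaled scalar least-energy solution on $A$ and $\psi\in H^1(\Omega)$ supported in $B$ solving the scalar $v$-equation with the induced mixed boundary conditions, so that $\int_\Omega\phi^2\psi^2=0$ and $(\phi,\psi)\in\mathcal N_\beta$ uniformly in $\beta$, and compare $\tfrac14(\|\phi\|_\lambda^2+\|\nabla\psi\|_2^2)$ with $L_\lambda$ and $L_0$ via a fine scaling argument. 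Once a fully non-trivial minimizer is produced, non-constancy is automatic for $|\beta|\ge 1$: by Proposition~\ref{constant solution}, every fully non-trivial constant solution for $(\lambda_1,\lambda_2)=(\lambda,0)$ must satisfy $c_2^2=|\beta|c_1^2$ and $\lambda=c_1^2(1-\beta^2)$, which has no real solution as soon as $\beta^2\ge 1$. The main obstacle will be the quantitative energy comparison in the last step, which is precisely where the hypothesis $|\beta|$ large is essentially used.
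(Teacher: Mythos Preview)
Your approach has a genuine gap in the closure step. You plan to rule out $v_\infty\equiv 0$ by proving $c_\beta<L_\lambda$, but this inequality is impossible on your two-constraint Nehari set: for every $(u,v)\in\mathcal N_\beta$ with $\beta<0$ one has $\|u\|_\lambda^2\le |u|_4^4$, hence by the scalar Sobolev--Nehari characterization $\tfrac14\|u\|_\lambda^2\ge \tfrac14\,\|u\|_\lambda^4/|u|_4^4\ge L_\lambda$, so
\[
\mathcal J_\beta(u,v)=\tfrac14\bigl(\|u\|_\lambda^2+|\nabla v|_2^2\bigr)\ge L_\lambda
\]
and therefore $c_\beta\ge L_\lambda$ always. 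Your disjoint-support test pair gives an upper bound of the form $\tfrac14\|\phi\|_\lambda^4/|\phi|_4^4+\tfrac14|\nabla\psi|_2^4/|\psi|_4^4\ge L_\lambda+\text{(positive)}$, so no refinement of that construction can yield $c_\beta<L_\lambda$. Consequently the scenario ``minimizing sequence degenerates to a scalar least-energy solution in the $u$-slot and $v_n\to 0$'' is not excluded by energy comparison.

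The paper circumvents this by working on a \emph{three}-constraint Nehari-type set that additionally imposes $\langle\mathcal J_\beta'(u,v),(0,1)\rangle=\int_\Omega v^3+\beta\int_\Omega u^2v=0$, which every solution of \eqref{Pb} satisfies since constants span $H_2^0$ when $\lambda_2=0$. This extra constraint is precisely what prevents $v_n\to 0$: normalizing $w_n=v_n/\|v_n\|$ and dividing the third identity by $\|v_n\|$ forces a contradiction whether the weak limit of $w_n$ is trivial or not (Proposition~\ref{chiusura sezione 5}). The hypothesis $|\beta|$ large is then used not for an energy inequality but to obtain uniform lower bounds $|u|_2,|\nabla v|_2\ge\delta$ on sublevel sets (Lemma~\ref{stime norme due}) and to make the associated $3\times 3$ constraint matrix negative definite (Proposition~\ref{punti regolari}), so that the Lagrange multipliers vanish. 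Your boundedness argument for $\bar v_n$ and your disjoint-support upper bound are both correct and match Step~1 of the paper's proof, but the missing third constraint is essential for the rest.
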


We also consider the case $\lambda_1=\lambda_2\le0$. Since we do not know if the Nehari set is  a $C^1$-manifold in this case, we find a least energy solution minimizing $\mathcal{J}_\beta$ directly on the set of fully nontrivial solutions (in the spirit of \cite{xu2025least}), after proving that this set is non empty.  It is in this last part that the equality between the $\lambda_i$'s is important.

\begin{theorem}\label{thm 5}
   Let $N\le4$ and $\lambda := \lambda_1 = \lambda_2 \le 0$, and assume that $\beta <0$ is such that  $|\beta|$ is sufficiently small.
   Then there exists a least energy solution which is non-constant, if either $|\lambda|$ is not an eigenvalue or $\lambda = 0$.

\end{theorem}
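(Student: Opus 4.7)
The plan, adapting \cite{xu2025least}, is to obtain the solution by minimizing $\mathcal J_\beta$ directly on the set of fully nontrivial critical points,
\[
\mathcal S_\beta:=\{(u,v)\in H:\ u,v\not\equiv 0,\ \mathcal J_\beta'(u,v)=0\},
\]
after first ensuring $\mathcal S_\beta\neq\emptyset$. Non-constancy of the resulting minimizer is automatic: if $(c_1,c_2)$ is a constant solution then $c_i(\lambda-c_i^2-\beta c_{3-i}^2)=0$ for $i=1,2$, and subtracting the two relations (using $\beta\neq 1$) yields $c_1^2=c_2^2$, whence $(1+\beta)c_1^2=\lambda\le 0$; as $1+\beta>0$ for $|\beta|<1$, only $(0,0)$ is a constant solution.

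The first step, which is precisely the one where $\lambda_1=\lambda_2=:\lambda$ plays an essential role, is to show $\mathcal S_\beta\neq\emptyset$. For this I use the symmetric ansatz $v=u$, which reduces \eqref{Pb} to the scalar equation $-\Delta u+\lambda u=(1+\beta)u^3$ with $\partial_\nu u=0$; since $1+\beta>0$, the rescaling $w=\sqrt{1+\beta}\,u$ converts this into \eqref{scalar eq}. The latter admits a nontrivial weak solution in both regimes covered by the theorem: for $\lambda<0$ with $|\lambda|$ not a Neumann eigenvalue, via a linking theorem as in the Dirichlet analogue of \cite{willem2012minimax}; for $\lambda=0$, by standard variational arguments, the solution being necessarily sign-changing since integrating the equation gives $\int_\Omega w^3=0$. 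Thus $(u,u)\in\mathcal S_\beta$ and one obtains the a priori bound $l_\beta\le \mathcal J_\beta(u,u)=\tfrac{2}{1+\beta}L_\lambda<\infty$.

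Next I would take a minimizing sequence $(u_n,v_n)\in\mathcal S_\beta$ with $\mathcal J_\beta(u_n,v_n)\to l_\beta$ and prove boundedness in $H$. The critical-point identity
\[
\mathcal J_\beta(u_n,v_n)=\tfrac14 B((u_n,v_n),(u_n,v_n))=\tfrac14\int_\Omega(u_n^4+2\beta u_n^2v_n^2+v_n^4),
\]
combined with $u^4+2\beta u^2v^2+v^4\ge(1-|\beta|)(u^4+v^4)$ (valid for $|\beta|<1$ by AM-GM), controls $|u_n|_4,|v_n|_4$ uniformly; Hölder then yields $L^2$-bounds, and the first identity forces $\|\nabla u_n\|_2,\|\nabla v_n\|_2$ to be bounded. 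Extracting a subsequence, $(u_n,v_n)\rightharpoonup(u,v)$ weakly in $H$. For $N\le 3$ the compact embedding $H^1\hookrightarrow L^4$ yields at once that $(u,v)$ is a weak solution of \eqref{Pb}. In the critical case $N=4$, a Brezis-Lieb decomposition of the quartic integrals combined with the upper bound $l_\beta\le \tfrac{2L_\lambda}{1+\beta}$ (which for small $|\beta|$ lies below the concentration threshold controlled by $C_S$, via arguments analogous to those used in Theorems~\ref{Theorem 1.1}--\ref{Theorem 1.2}) rules out loss of compactness.

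The main obstacle is to exclude semi-triviality of the weak limit. Suppose $v\equiv 0$; passing to the limit in the energy identity and in the second equation gives that $u$ solves \eqref{scalar eq} and $l_\beta=\tfrac14|u|_4^4$. I would first exclude $u\equiv 0$: otherwise $\mathcal J_\beta(u_n,v_n)\to 0$, and rescaling by the blow-up scale $t_n=\|(u_n,v_n)\|_H$ the pair $(U_n,V_n):=(u_n,v_n)/t_n$ converges to a nontrivial pair in the kernel of $-\Delta+\lambda$, which by the constant-solution analysis above forces triviality, contradicting $\|(U,V)\|_H=1$. Hence $u\not\equiv 0$. Next I would renormalize $\tilde v_n:=v_n/\|v_n\|_{H^1}$, which satisfies
\[
-\Delta\tilde v_n+(\lambda-\beta u_n^2)\tilde v_n=v_n^2\tilde v_n\ \text{in }\Omega,\qquad \partial_\nu\tilde v_n=0,
\]
with unit $H^1$-norm. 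By Rellich $\tilde v_n\to\tilde v$ in $L^2$, and testing against $\tilde v_n$ while using $v_n\to 0$ in $L^4$ shows $\tilde v\not\equiv 0$, with $\tilde v$ in the kernel of $-\Delta+(\lambda-\beta u^2)$. For $\lambda<0$ with $|\lambda|$ not a Neumann eigenvalue, $-\Delta+\lambda$ is invertible and, for $|\beta|$ small, the bounded perturbation $-\beta u^2$ preserves invertibility (with $u\in L^\infty$ by elliptic regularity); for $\lambda=0$ and $\beta<0$, the quadratic form $\int|\nabla\phi|^2+|\beta|\int u^2\phi^2$ is strictly positive on nonzero $\phi$ because $u\not\equiv 0$ and $\Omega$ is connected. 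In either case the kernel is trivial, contradicting $\tilde v\not\equiv 0$. Hence $(u,v)\in\mathcal S_\beta$ realizes $l_\beta$, and by the opening observation it is non-constant.
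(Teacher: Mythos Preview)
Your overall strategy---minimize $\mathcal J_\beta$ over fully nontrivial critical points, after producing one via the diagonal ansatz $(1+\beta)^{-1/2}(\omega,\omega)$---matches the paper's, and the boundedness and subcritical compactness steps are essentially those of Theorem~\ref{compattezza w.c.}. (For $N=4$ the relevant threshold is $L_\lambda+\tfrac{S^2}{8}$, obtained via Cherrier's inequality with the constant $S$ of \eqref{Sobolev constant}, not $C_S$; your bound $l_\beta\le\tfrac{2L_\lambda}{1+\beta}$ does fall below it for $|\beta|$ small, as in Proposition~\ref{soglia w.c.}.)

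The genuine gap is in excluding the limit $(u,v)=(0,0)$ when $\lambda=0$. You assert that the blow-up limit $(U,V)$ lies in the kernel of $-\Delta$ (hence is a constant pair) and then invoke ``the constant-solution analysis above'' to force triviality. But lying in the kernel is not the same as solving the nonlinear system: the rescaled equation only yields $-\Delta U=0$ in the limit, with no cubic term. To recover a constraint on $(c_1,c_2)$ you must test the \emph{unrescaled} equations against constants and divide by $t_n^3$: from $\int u_n^3+\beta\int u_n v_n^2=0$ one gets $c_1(c_1^2+\beta c_2^2)=0$, and symmetrically, whence $(1+\beta)(c_1^2+c_2^2)=0$. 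This is exactly the step the paper carries out in Proposition~\ref{closure Xb} (see \eqref{eq star}). A related omission: you also need the blow-up limit to be \emph{nontrivial}, which weak convergence alone does not give; the paper secures this by decomposing $z_n=z_n^++z_n^-+z_n^0$ and showing $\|z_n^\pm\|=O(t_n)$.

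Your exclusion of a semi-trivial limit via the kernel of the perturbed operator $-\Delta+(\lambda-\beta u^2)$ is a genuinely different route from the paper's. For $\lambda=0$ it is actually cleaner: since $\beta<0$, the quadratic form $\int|\nabla\phi|^2+|\beta|\int u^2\phi^2$ is strictly positive on nonzero $\phi$ whenever $u\not\equiv0$, and this needs no smallness of $|\beta|$. For $|\lambda|$ not an eigenvalue your perturbation argument is correct in spirit but requires $|\beta|\,\|u\|_\infty^2$ small with a bound on $\|u\|_\infty$ that is \emph{uniform} in $\beta$; this follows from $\tfrac14|u|_4^4\le l_\beta\le \tfrac{2L_\lambda}{1+\beta}$ together with Brezis--Kato regularity, but you should make that step explicit. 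The paper instead handles this case by testing with constants (for $\lambda=0$) and by reduction to \cite{xu2025least} (for $|\lambda|$ not an eigenvalue).
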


\begin{theorem}\label{thm 6}
    Let $N\le3$, $\lambda:=\lambda_1=\lambda_2\le0$ and assume that $\beta <0$ with  $|\beta|$ large enough. Then
    \begin{itemize}
        \item[$(i)$] there exist infinitely many non-constant solutions for every $\lambda\le0$,
        \item[$(ii)$] there exists a least energy solution, which is not constant, for $\lambda=0$.
    \end{itemize}
\end{theorem}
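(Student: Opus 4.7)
For part (i), the plan is to apply a $\Z_2$-equivariant minimax theorem, most naturally the Fountain Theorem of Bartsch, to the functional $\mathcal J_\beta$ on $H = H^1(\Omega) \times H^1(\Omega)$. The functional is even under the involution $(u,v) \mapsto (-u,-v)$, and the subcritical assumption $N \leq 3$ yields the compact embedding $H^1(\Omega) \hookrightarrow L^4(\Omega)$, which is the key ingredient for verifying the Palais--Smale condition.

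Concretely, I would proceed as follows. First, fix a Hilbert basis of $H$ built from Neumann eigenfunctions of $-\Delta$, producing a $\Z_2$-invariant splitting $H = \overline{\bigoplus_{k\geq 1} X_k}$ with finite-dimensional partial sums $Y_k$ and orthogonal complements $Z_k$. Second, verify the geometric conditions of the Fountain Theorem: produce radii $\rho_k > r_k > 0$ with $\sup_{Y_k \cap S_{\rho_k}} \mathcal J_\beta \leq 0$ and $\inf_{Z_k \cap S_{r_k}} \mathcal J_\beta \to +\infty$. Third, establish $(PS)_c$ at every level $c$ by combining the compact embedding with the standard argument for the boundedness of PS sequences. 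Fourth, extract an unbounded sequence of critical values $c_k \to \infty$. Since the constant solutions of \eqref{Pb} form a finite set (by Proposition \ref{constant solution}) of explicitly bounded energy, only finitely many of these levels can correspond to constant solutions, leaving infinitely many non-constant ones.

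The main obstacle is combining the geometric conditions with the boundedness of $(PS)_c$ sequences when $\lambda \leq 0$ and $\beta < -1$: the quartic form $u^4 + 2\beta u^2 v^2 + v^4$ becomes indefinite (in particular, it is negative along the diagonal $u = v$), so $\mathcal J_\beta$ is unbounded both below and above and the textbook argument for PS boundedness breaks down. To circumvent this, I would restrict the variational problem to an invariant submanifold on which the quartic part regains definiteness---a Nehari-type set, or a cone of segregated pairs with $u \geq 0 \geq v$---and apply the equivariant minimax principle there. The strong repulsion (large $|\beta|$) justifies the restriction, since solutions of strongly competing systems tend to have nearly disjoint supports, forcing $\int_\Omega u^2 v^2$ to be small and making the functional effectively decouple.

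For part (ii), a direct computation (see Proposition \ref{constant solution}) shows that for $\lambda = 0$ and $\beta < 0$ with $\beta \neq -1$, the system admits no non-trivial constant solutions: adding $c_1^2 + \beta c_2^2 = 0 = c_2^2 + \beta c_1^2$ yields $(1+\beta)(c_1^2 + c_2^2) = 0$, which forces $c_1 = c_2 = 0$. Hence every fully nontrivial solution is automatically non-constant. The plan is then to minimize $\mathcal J_\beta$ directly over the set of fully nontrivial weak solutions, in the spirit of the proof of Theorem \ref{thm 5} and \cite{xu2025least}. Non-emptiness of this set is guaranteed by part (i); attainment of the infimum uses boundedness of minimizing sequences via Nehari-type identities, the compact embedding $H^1(\Omega) \hookrightarrow L^4(\Omega)$ for $N \leq 3$, and weak lower semicontinuity of $\mathcal J_\beta$. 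The delicate point is preventing either component from vanishing in the weak limit, which would be handled via uniform $L^4$-lower bounds on fully nontrivial solutions available in the strongly competitive regime.
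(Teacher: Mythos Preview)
Your plan for part (ii) matches the paper's: once $X_\beta\neq\emptyset$ is known from (i), one minimizes $\mathcal J_\beta$ over $X_\beta$; Proposition~\ref{PS condition NR} gives boundedness and the $(PS)$ condition, and the closure of $X_\beta$ for $\lambda=0$ (Proposition~\ref{closure Xb}) rules out collapse to a semi-trivial limit.

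Part (i), however, has a genuine gap. The Fountain Theorem with the action $(u,v)\mapsto(-u,-v)$ does not apply here, and your proposed workarounds do not rescue it. The Fountain geometry requires a finite-dimensional $Y_k$ on which $\mathcal J_\beta\le 0$ outside a large ball; but for $\beta<-1$ the ray $t\mapsto(tw,tw)$, which lies in any subspace containing diagonal pairs, satisfies
\[
\mathcal J_\beta(tw,tw)=t^2\bigl(|\nabla w|_2^2+\lambda|w|_2^2\bigr)-\tfrac{1+\beta}{2}\,t^4|w|_4^4\longrightarrow +\infty,
\]
so $\sup_{Y_k\cap\partial B_{\rho_k}}\mathcal J_\beta\le 0$ is impossible. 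Restricting to a cone $\{u\ge 0\ge v\}$ destroys the $\Z_2$-invariance you need (the involution sends this cone to $\{u\le 0\le v\}$), and restricting to a Nehari-type set is not a linear subspace structure, so the Fountain machinery no longer applies. The same indefiniteness of the quartic also obstructs the standard proof of $(PS)$-boundedness for $\mathcal J_\beta$ itself when $\lambda\le0$ and $\beta<-1$.

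The paper circumvents all of this by switching to a \emph{different} $\Z_2$ action and a modified functional, following Noris--Ramos \cite{noris2010existence, xu2022infinitely}. One works with
\[
J(u,v)=\tfrac12\|u\|^2+\tfrac12\|v\|^2+\tfrac{\lambda-1}{2}\!\int_\Omega(u_+^2+v_+^2)-\tfrac14\!\int_\Omega\bigl(u_+^4+v_+^4+2\beta u^2v^2\bigr),
\]
which is invariant under the swap $(u,v)\mapsto(v,u)$ (here $\lambda_1=\lambda_2$ is essential). The minimax class consists of maps $\gamma:Q_k\to H$ from a ball in a \emph{scalar} eigenspace $H_k$, equivariant in the sense $\gamma(-z)=(\gamma_2(z),\gamma_1(z))$, with boundary values $\gamma(z)=(z_+,z_-)$. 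The linking set is $S_k=\{(u,v):u-v\in H_k^\perp,\ \|u-v\|=\rho_k\}$, and the key geometric lemma is that $\inf_{S_k}J\ge\alpha$ for $k$ and $|\beta|$ large: precisely because $\beta<0$ is large in modulus, the term $-\tfrac{\beta}{2}\int u^2v^2$ pushes $J$ up when $u$ and $v$ are forced to overlap. This is how strong competition enters the geometry---the opposite mechanism from what the Fountain setup would use. Boundedness of $(PS)$ sequences and the $(PS)_c$ condition for $J$ (and for $\mathcal J_\beta$) are then imported from \cite{xu2022infinitely}; critical points of $J$ are solutions of \eqref{Pb}, and the levels $d_k\to+\infty$ exceed the energy $\tfrac{\lambda^2}{2(1+\beta)}\le0$ of any constant solution.
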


\subsection{Some extensions to the Dirichlet case}

Regarding the Dirichlet case, by adapting some of the techniques of this paper, we can extend what was previously known (recall the state of the art section) and have the following new result.  Observe that with Dirichlet boundary conditions, we do not have to exclude constant solutions.

\begin{theorem}\label{Estensione Dirichlet}
Let $\Omega\subset\R^N$ be a bounded domain. 
The system 
\begin{equation*}
-\Delta u + \lambda_1 u = u^3 + \beta uv^2, \  -\Delta v + \lambda_2 v = v^3 + \beta u^2 v \  \text{in } \Omega,\qquad 
u= v = 0 \ \text{on } \partial \Omega
\end{equation*}
 has a least energy solution in the following cases:
\begin{itemize}
\item[$(i)$] $N=4$, $\lambda_1,\lambda_2<0$ and $|\lambda_1|,|\lambda_2|$ is not an eigenvalue, $\beta>0$ is small enough;
\item[$(ii)$] $N\le3$, $\lambda_1\ge0$ and $\lambda_2=\mu_1^D(\Omega)$, $\beta<0$ with $|\beta|$ large enough;
\item[$(iii)$] $N=4$, $\lambda:=\lambda_1=\lambda_2<0$ and $|\lambda|$ is not an eigenvalue, $\beta<0$ with $|\beta|$ small enough;
\item[$(iv)$] $N\le3$, $\lambda:=\lambda_1=\lambda_2=\mu_1^D(\Omega)$, $\beta<0$ with $|\beta|$ large enough.
\end{itemize}
\end{theorem}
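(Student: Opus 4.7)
The plan is to adapt, case by case, the techniques developed in the Neumann part of the paper to the Dirichlet setting, working on $H_0^1(\Omega)\times H_0^1(\Omega)$ and replacing the Neumann spectrum by the Dirichlet spectrum $\{\mu_k^D(\Omega)\}_k$. A key simplification is that nonzero constant functions do not belong to $H_0^1(\Omega)$, so the refined non-constancy analysis needed in the Neumann case is unnecessary: it suffices to produce a fully non-trivial critical point of minimal energy.

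For case $(i)$, the critical cooperative regime with $\beta>0$ small and $\lambda_1,\lambda_2<0$ non-resonant, I would mimic the strategy of Theorem \ref{thm ground state} combined with the Nehari-type argument of Theorem \ref{Theorem 1.1}$(i)$. Decompose each copy of $H_0^1(\Omega)$ as $V_i^-\oplus V_i^+$, where $V_i^-$ is spanned by the Dirichlet eigenfunctions associated to $\mu_k^D(\Omega)<-\lambda_i$; the non-resonance assumption makes the quadratic form non-degenerate and bounded below on $V_i^+$. Apply the Rabinowitz linking theorem on this decomposition to obtain a Palais--Smale sequence at the linking level, and then minimize $\mathcal{J}_\beta$ over all non-trivial solutions, as in the proof of Theorem \ref{thm ground state}. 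For $N=4$ the Palais--Smale condition holds only below the Brezis--Nirenberg critical threshold, so the key estimate is that the linking level stays strictly below the Sobolev-type threshold; standard Talenti-bubble test functions placed at an interior point (insensitive to the boundary condition) combined with the smallness of $\beta$ provide the required strict inequality.

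Cases $(ii)$ and $(iv)$, in the subcritical competitive regime with $|\beta|$ large, are adaptations of Theorems \ref{main result 4} and \ref{thm 6}$(ii)$ respectively. Introduce the generalized Nehari-type constraint
\begin{equation*}
\mathcal{N}_\beta=\{(u,v)\in H_0^1(\Omega)\times H_0^1(\Omega):\ u,v\not\equiv 0,\ \partial_u\mathcal{J}_\beta(u,v)[u]=\partial_v\mathcal{J}_\beta(u,v)[v]=0\},
\end{equation*}
verify that $\mathcal{N}_\beta$ is a $C^1$-manifold and a natural constraint, and minimize $\mathcal{J}_\beta$ on it. Compactness of minimizing sequences comes for free from the subcritical embedding $H_0^1(\Omega)\hookrightarrow L^4(\Omega)$. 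The non-emptiness of $\mathcal{N}_\beta$ at $\lambda_2=\mu_1^D(\Omega)$ (respectively $\lambda_1=\lambda_2=\mu_1^D(\Omega)$) is obtained by constructing, for $|\beta|$ sufficiently negative, explicit admissible pairs using the first Dirichlet eigenfunction rescaled appropriately, mimicking the role of the zero-eigenspace of the Neumann Laplacian in the proofs of Theorems \ref{main result 4} and \ref{thm 6}. The sign-changing ansatz from \cite{noris2010existence} could also be invoked to guarantee admissibility for large $|\beta|$.

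Case $(iii)$, the critical competitive regime with $\lambda_1=\lambda_2=\lambda<0$ non-resonant and $|\beta|$ small, follows the scheme of Theorem \ref{thm 5}: show that the set of fully non-trivial solutions is non-empty via a linking argument as in $(i)$ (with the small competition keeping the cross term perturbative), check that this set is closed, and verify that any minimizing sequence is bounded and converges weakly to a fully non-trivial limit using Brezis--Lieb together with the sub-threshold energy estimate. The main obstacle throughout will be the critical cases $(i)$ and $(iii)$, where one must verify the Brezis--Nirenberg-type strict inequality for the linking/minimum level in the presence of an indefinite quadratic form and a cross term; this is where the smallness of $|\beta|$ together with the non-resonance assumption enters crucially, and where the analysis differs most from the Neumann case, since the Dirichlet boundary condition excludes the half-bubble concentration profiles that appear on the boundary in Neumann problems.
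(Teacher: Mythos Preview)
Your high-level mapping of cases to Neumann theorems matches the paper, but there are genuine gaps in the execution. For $(i)$, the route ``apply linking, then minimize $\mathcal J_\beta$ over all non-trivial solutions'' yields a \emph{ground state}, and for small $\beta>0$ ground states are typically semi-trivial (cf.\ Proposition~\ref{sharp beta*}); a least energy solution requires instead minimizing on the Nehari-type set $\mathcal N_\beta$ of Section~\ref{weakly cooperative case}, which carries, in addition to your two constraints, the orthogonality conditions $\langle\mathcal J_\beta'(u,v),\tilde{\boldsymbol w}\rangle=0$ for all $\tilde{\boldsymbol w}$ in the finite-dimensional negative/null eigenspace $\tilde H$. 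The critical threshold is then obtained from interior Talenti bubbles together with \cite[Lemma~3.4]{SzulkinWethWillem}, which is exactly where the non-resonance hypothesis enters. Similarly, for $(ii)$ with $\lambda_2=\mu_1^D(\Omega)$ the form $B_2$ is degenerate along $\varphi_1$, and your two-constraint $\mathcal N_\beta$ is not a natural constraint; the paper's analogue (Section~\ref{competitive new}) imposes the extra condition $\langle\mathcal J_\beta'(u,v),(0,\varphi_1)\rangle=0$, and proving this codimension-three set is a $C^1$-manifold (Proposition~\ref{punti regolari}) is where $|\beta|$ large is genuinely used.

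For $(iii)$ and $(iv)$ the paper does not use a Nehari set at all: when both $\lambda_i$ hit the threshold it is unclear the set is a manifold, so one minimizes $\mathcal J_\beta$ directly on the set $X_\beta$ of fully non-trivial solutions, after showing $X_\beta\neq\emptyset$ and $X_\beta$ closed (Proposition~\ref{closure Xb}). For $(iii)$, non-emptiness is immediate from the symmetry $\lambda_1=\lambda_2$: the diagonal pair $\bigl((1+\beta)^{-1/2}\omega,(1+\beta)^{-1/2}\omega\bigr)$, with $\omega$ a scalar least-energy solution, lies in $X_\beta$ for every $\beta\in(-1,0)$ and gives the bound $l_\beta\le 2L_\lambda/(1+\beta)$ needed to stay below the compactness threshold (Proposition~\ref{soglia w.c.}); no linking is needed, and a linking argument for $\beta<0$ would require reworking the geometry of Section~\ref{cooperative}. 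For $(iv)$, non-emptiness comes from the Noris--Ramos minimax \cite{noris2010existence}, which you mention only as an optional alternative but is in fact the core mechanism.
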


Indeed, the proof of Theorem \ref{Estensione Dirichlet}-$(i)$ would follow the same proof as Theorem \ref{Theorem 1.1}-$(i)$ (one should only replace the estimates of Lemma \ref{Estimates for C} below with the ones of \cite[Lemma 3.4]{SzulkinWethWillem},   asking as a consequence that $|\lambda_1|,|\lambda_2|$ is not an eigenvalue). Theorem \ref{Estensione Dirichlet}-$(ii)$  would follow as Theorem \ref{main result 4}, while $(iii)$ and $(iv)$ follow Theorems \ref{thm 5} and \ref{thm 6}, respectively. The first and third items  extend the work of \cite{xu2025least} to the critical case, while  the second and fourth items provide partial answers for the less explored case $\beta < -1,\ \lambda_1, \lambda_2 \le -\mu_1^D(\Omega)$.

\paragraph{\textbf{Structure of the paper}}
We treat the cooperative case in Section~3, and the competitive case with $\lambda_1, \lambda_2 > 0$ in Section~4. The case $\beta < -1$ with $\lambda_2 = 0$ and $\lambda_1 > 0$ is addressed in Section~5, while Section~6 is devoted to the competitive case with $\lambda_1 = \lambda_2 \le 0$. Section~2 is devoted to preliminary notions, including a brief review of the scalar equation.

\subsection{Notation}
For $ f \in L^p(\Omega) $, $ |f|_p $ represents the $ L^p $-norm of $ f$, while, for $ \lambda > 0 $ and $ f \in H^1(\Omega) $, we define the (equivalent) $ H^1 $-norm as $ \|f\|_\lambda^2 := |\nabla f|_2^2 + \lambda |f|_2^2 $. If $\lambda=1$ we write $\|\cdot\|_1=\|\cdot\|$.

The space $ \mathcal{D}^{1,2}(\mathbb{R}^N) $ is the completion of $ C_c^\infty(\R^N) $ with respect to the norm $ |\nabla u|_2 $. We also denote $H=H^1(\Omega)\times H^1(\Omega)$ and, for every $N\le4$, 
\begin{equation}\label{Sobolev constant H1}
C_S:=\inf_{u\in H^1(\Omega)\setminus\{0\}}\frac{\int_\Omega|\nabla u|^2+\int_\Omega u^2}{\left(\int_\Omega u^4\right)^{\frac12}},
\end{equation}
namely  the best constant of the embedding $H^1(\Omega)\hookrightarrow L^4(\Omega)$. In dimension $4$, we also define the best constant of the embedding $\mathcal D^{1,2}(\R^4)\hookrightarrow L^4(\R^4)$:
\begin{equation}\label{Sobolev constant}
S:=\inf_{u\in\mathcal D^{1,2}(\R^4)\setminus\{0\}}\frac{\int_\Omega|\nabla u|^2}{\left(\int_\Omega u^4\right)^{\frac12}}.\end{equation}

\section{Preliminaries}\label{preliminaries}
We establish some general properties of weak solutions to \eqref{Pb}.
\begin{proposition}\label{constant solution}
Let $c_1,c_2\in\R$. The couple $(c_1,c_2)$ solves \eqref{Pb} if and only if one of the following conditions hold 
\begin{align}
   (c_1,c_2)=(\pm\sqrt{\lambda_1},0),\quad \text{when $\lambda_1\ge0$}, \quad (c_1,c_2)=(0,\pm\sqrt{\lambda_2}),\quad \text{when $\lambda_2\ge0$},\nonumber \\
    c_1^2+c_2^2=\lambda_1,\quad \text{when $\lambda_1=\lambda_2>0$ and $\beta=1$},\quad c_1^2-c_2^2=\lambda_1,\quad \text{when $\lambda_1=-\lambda_2$ and $\beta=-1$},\nonumber \\
    \label{costanti non banali}
    (c_1,c_2)=\left(\pm\sqrt{\frac{\lambda_1-\beta\lambda_2}{1-\beta^2}}, \pm\sqrt{\frac{\lambda_2-\beta\lambda_1}{1-\beta^2}}\right)\quad \text{when $\frac{\lambda_1-\beta\lambda_2}{1-\beta^2},\frac{\lambda_2-\beta\lambda_1}{1-\beta^2}\ge0$. }
\end{align}
\end{proposition}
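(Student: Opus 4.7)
The plan is purely algebraic: since $(c_1,c_2)$ is constant, we have $\Delta c_i = 0$ and the Neumann condition is automatic, so the PDE system reduces to the algebraic system
\begin{equation*}
c_1(c_1^2 + \beta c_2^2 - \lambda_1) = 0, \qquad c_2(c_2^2 + \beta c_1^2 - \lambda_2) = 0.
\end{equation*}
I would then split into the four cases according to whether each $c_i$ vanishes.

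The trivial case $c_1 = c_2 = 0$ gives $(0,0)$. If $c_1 \neq 0$ and $c_2 = 0$, the first equation forces $c_1^2 = \lambda_1$, hence $\lambda_1 \geq 0$ and $c_1 = \pm \sqrt{\lambda_1}$; symmetrically for $c_1 = 0$, $c_2 \neq 0$.

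The main (and only mildly delicate) case is $c_1, c_2 \neq 0$. Here the two factors in parentheses must vanish, giving the linear system
\begin{equation*}
\begin{cases} c_1^2 + \beta c_2^2 = \lambda_1, \\ \beta c_1^2 + c_2^2 = \lambda_2, \end{cases}
\end{equation*}
in the unknowns $(c_1^2, c_2^2)$, whose coefficient matrix has determinant $1 - \beta^2$. If $\beta^2 \neq 1$, Cramer's rule uniquely yields
\begin{equation*}
c_1^2 = \frac{\lambda_1 - \beta \lambda_2}{1-\beta^2}, \qquad c_2^2 = \frac{\lambda_2 - \beta \lambda_1}{1-\beta^2},
\end{equation*}
which are admissible (i.e., real and positive) exactly when both right-hand sides are nonnegative, producing the family in \eqref{costanti non banali}. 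If $\beta = 1$, the two equations become $c_1^2 + c_2^2 = \lambda_1$ and $c_1^2 + c_2^2 = \lambda_2$, which is consistent only when $\lambda_1 = \lambda_2$ (necessarily positive since $c_1, c_2$ are nonzero), giving the sphere $c_1^2 + c_2^2 = \lambda_1$. If $\beta = -1$, subtracting the equations gives $c_1^2 - c_2^2 = \lambda_1 = -\lambda_2$, yielding the hyperbolic family.

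Collecting all cases reproduces exactly the list in the statement, and the converse (that each listed $(c_1,c_2)$ solves \eqref{Pb}) is immediate by direct substitution. There is no real obstacle here; the only thing to watch is not to double-count (the case $\beta = \pm 1$ must be handled separately from the Cramer branch because the system is then degenerate), and to be explicit about the sign constraints $\lambda_i \geq 0$ or $\frac{\lambda_i - \beta \lambda_j}{1-\beta^2} \geq 0$ that guarantee the square roots are real.
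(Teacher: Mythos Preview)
Your proof is correct and follows essentially the same approach as the paper: reduce to the algebraic system, split into cases according to which $c_i$ vanish, and in the fully nontrivial case solve the linear system in $(c_1^2,c_2^2)$ by Cramer's rule, treating $\beta=\pm 1$ separately. If anything, your version is slightly more careful in the $\beta=-1$ case (correctly deriving $\lambda_1=-\lambda_2$), whereas the paper's proof contains a small typo there.
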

\begin{proof}
    The couple $(c_1,c_2)$ solves \eqref{Pb} if and only if
    \begin{equation}\label{system costants}
        \lambda_1 c_1=c_1^3+\beta c_1c_2^2,\qquad
        \lambda_2 c_2=c_2^3+\beta c_1^2c_2.
    \end{equation}
    Now, if $c_1=0$ and $c_2\ne0,$ we obtain $c_2=\pm\sqrt{\lambda_2}$ when $\lambda_2\ge0$. Similarly, if $c_2=0$, we get $c_1=\pm\sqrt{\lambda_1}$  for $\lambda_1\geq 0$. Hence, we have the trivial constant solution $(0,0)$ and the semi-trivial constant solutions $(\pm\sqrt{\lambda_1}, 0), (0,\pm\sqrt{\lambda_2})$, when they are well defined. Otherwise, we can simplify system \eqref{system costants} as follows:
\begin{equation}\label{costanti ammissibili}
\begin{cases}
    \lambda_1=c_1^2+\beta c_2^2\\
    \lambda_2=c_2^2+\beta c_1^2
\end{cases}
\iff
\begin{cases}
    c_1^2=\frac{\lambda_1-\beta\lambda_2}{1-\beta^2}\\
    c_2^2=\frac{\lambda_2-\beta\lambda_1}{1-\beta^2},
\end{cases}
\end{equation}
when they are defined. Observe that, when $\beta = 1$, the system \eqref{system costants} reduces to $\lambda_1 = \lambda_2 = c_1^2 + c_2^2$, whereas, when $\beta = -1$, we obtain $\lambda_1 = \lambda_2 = c_1^2 - c_2^2$. This concludes the proof.
\end{proof}

\begin{remark}
We note that, for every $(u, v)$ solution of \eqref{Pb}, $u$ is a constant if and only if $v$ is a constant. Then the couples $(c_1,v), (u,c_2)$ with $u,v$ non-constant functions and $c_1,c_2\in \R$ are not admissible solutions.
\end{remark}

\begin{proposition}\label{regularity}
    Let $(u,v)\in H$ be a weak solution of \eqref{Pb}. Then $u,v\in C^{2,\gamma}(\Omega)\cap C^{1,\gamma}(\overline\Omega)$. Furthermore, if $\partial\Omega\in C^{2,\alpha}$ for some $\alpha\in(0,1)$, then there exists $\tilde\gamma\in(0,1)$ such that $u,v\in C^{2,\tilde\gamma}(\overline\Omega)$.
\end{proposition}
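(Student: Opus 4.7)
The plan is a standard bootstrap argument, where the only real subtlety lies in the critical case $N=4$, since in that regime the Sobolev embedding alone does not immediately put the nonlinearity into $L^q$ with $q>N/2$. I would split the argument accordingly.

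In the subcritical case $N\le 3$, since $H^1(\Omega)\hookrightarrow L^q(\Omega)$ for every $q\in[1,6]$, the right-hand sides $u^3+\beta uv^2$ and $v^3+\beta u^2v$ lie in some $L^{q_0}(\Omega)$ with $q_0>N/2$. Applying $L^p$-theory for the Neumann Laplacian (Agmon--Douglis--Nirenberg) iteratively, one raises the integrability of $u,v$ step by step, and after finitely many iterations reaches $u,v\in W^{2,q}(\Omega)$ for some $q>N$. By Morrey's embedding this gives $u,v\in C^{1,\gamma}(\overline\Omega)$ for some $\gamma\in(0,1)$.

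In the critical case $N=4$ I would first apply a Brezis--Kato-type argument. Writing the first equation as $-\Delta u+\lambda_1 u = a(x)\,u$ with $a:=u^2+\beta v^2$, we have $a\in L^2(\Omega)$ because $u,v\in L^4(\Omega)$, and analogously for $v$. Testing with truncated powers of $u$ (and $v$), coupled with the Sobolev embedding and Young's inequality, one obtains inductively that $u,v\in L^p(\Omega)$ for every $p\in[1,\infty)$; the Neumann boundary condition causes no extra trouble because the test functions are admissible in $H^1(\Omega)$ and the boundary term produced by integration by parts vanishes. Once this is in hand, $u^3+\beta uv^2,\,v^3+\beta u^2v\in L^q(\Omega)$ for every $q<\infty$, and $L^p$-Neumann regularity again yields $u,v\in W^{2,q}(\Omega)\hookrightarrow C^{1,\gamma}(\overline\Omega)$.

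At this point the right-hand sides belong to $C^{0,\gamma}(\overline\Omega)$, so interior Schauder estimates give $u,v\in C^{2,\gamma}(\Omega)$. Because $\partial\Omega$ is assumed only $C^2$, the boundary regularity is limited and the Neumann problem only gives $u,v\in C^{1,\gamma}(\overline\Omega)$ globally; if additionally $\partial\Omega\in C^{2,\alpha}$, the Schauder estimates for the Neumann boundary value problem (see, e.g., Gilbarg--Trudinger) produce $u,v\in C^{2,\tilde\gamma}(\overline\Omega)$ for some $\tilde\gamma\in(0,\min\{\alpha,\gamma\})$. The only non-routine step is the Brezis--Kato bootstrap in the critical dimension; the rest is an application of standard elliptic regularity to each scalar equation, treating the coupling term as a forcing.
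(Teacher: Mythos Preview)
Your proposal is correct and follows essentially the same route as the paper: write each equation as $-\Delta u = a(x)u$ with $a\in L^{N/2}(\Omega)$, invoke the Brezis--Kato/Moser iteration (adapted to Neumann conditions) to get $u,v\in L^p(\Omega)$ for all $p$, then apply $L^p$ Neumann regularity to reach $W^{2,p}$ and hence $C^{1,\gamma}(\overline\Omega)\cap C^{2,\gamma}(\Omega)$, and finally upgrade via Schauder estimates when $\partial\Omega\in C^{2,\alpha}$. The only cosmetic difference is that the paper does not split into subcritical and critical cases---it simply notes $a_i\in L^{N/2}$ and runs Brezis--Kato uniformly---whereas you handle $N\le 3$ by a direct bootstrap and reserve Brezis--Kato for $N=4$; both lead to the same conclusion.
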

\begin{proof}
    If $(u,v)$ is a weak solution for \eqref{Pb} we can write 
  \[
  -\Delta u=(u^2+\beta v^2-\lambda_1)u,\    -\Delta v=(v^2+\beta u^2-\lambda_2)v \text{ in } \Omega,\quad   \frac{\partial u}{\partial\nu}=\frac{\partial v}{\partial\nu}=0 \text{ on } \partial\Omega.
  \]
Let $a_i(x):=u^2(x)+\beta v^2(x)-\lambda_i\in L^{\frac N2}(\Omega)$ for $i=1,2$ . Using the Brezis-Kato estimates \cite{breziskato1} (based on the Moser iteration \cite{moser1960new}, see also \cite[Lemmas 2.1, 2.2]{saldana2022least} for the regularity theory adapted to the Neumann case), we obtain that $u,v\in W^{2,p}(\Omega)$ for every $p\ge1$. By Sobolev embeddings one obtains interior $C^{2,\gamma}-$regularity and $C^{1,\gamma}-$regularity up to the boundary for some $\gamma\in(0,1)$, since $\partial\Omega\in C^2$. If $\partial\Omega\in C^{2,\alpha}$ for some $\alpha\in(0,1)$,   we have that $u,v\in C^{2,\tilde\gamma}(\overline\Omega)$ with $\tilde\gamma\in(0,1)$, by the Schauder estimates \cite{nardi2015schauder}. 
\end{proof}

\begin{remark}
Let $\beta>1$ and $\lambda_1=\lambda_2=\lambda>0$. According to \cite[Theorem 4.2 and Remark-$(1)$ p. 7]{weiyao2012uniqueness}, any solution for :
\[
-\Delta u+\lambda u=u^3+\beta uv^2,\  -\Delta v+\lambda v=v^3+\beta u^2v\ \text{ in } \Omega,\quad
u,v>0\ \text{ in } \Omega,\quad \frac{\partial u}{\partial \nu}=\frac{\partial v}{\partial\nu}=0 \text{ on } \partial\Omega.
\]
is of the type $u=v$. However, the proof therein does not work for $\lambda<0$, and we leave as an open problem to check whether or not, also in this case, the components coincide.
\end{remark}

We recall an inequality that will be essential for analyzing the compactness issue in the critical case.

\begin{lemma}[Cherrier's inequality \cite{cherrier1984meilleures}]\label{Cherrier}
Let $\Omega\subset\R^4$ be a bounded domain with $\partial\Omega\in C^1$. For each $\varepsilon>0$ there exists $M_{\varepsilon}>0$ such that
\[|u|_{2^*}\le\left(\frac{\sqrt2}{S}+\varepsilon\right)^{1/2}|\nabla u|_2+M_\varepsilon|u|_2,\ \forall\   u\in H^1(\Omega).\]
\end{lemma}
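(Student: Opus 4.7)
The plan is to localize the inequality via a partition of unity adapted to the boundary, reducing the estimate to two model Sobolev inequalities: the usual one on $\R^4$ with sharp constant $1/S$, and its analogue on the half-space $\R^4_+$ with sharp constant $\sqrt 2/S$. Since the boundary pieces dictate the larger of the two constants, this is what appears in the statement; the term $M_\varepsilon |u|_2$ is the unavoidable overhead produced by the partition of unity.

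The first step is to establish the half-space Sobolev inequality. Even reflection $\tilde u$ of $u \in \mathcal D^{1,2}(\R^4_+)$ across $\{x_4=0\}$ gives an element of $\mathcal D^{1,2}(\R^4)$ with $|\nabla \tilde u|_2^2 = 2|\nabla u|_2^2$ and $|\tilde u|_4^4 = 2|u|_4^4$; feeding these into \eqref{Sobolev constant} produces $|u|_4^2 \le (\sqrt 2/S)|\nabla u|_2^2$ on $\R^4_+$. In parallel, I would use $\partial\Omega\in C^1$ to cover $\overline\Omega$ by finitely many interior patches $U_0\Subset \Omega$ and boundary patches $U_j$ on which $\Omega$ is straightened by $C^1$ diffeomorphisms $\Phi_j$. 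Shrinking the cover, the uniform continuity of $D\Phi_j$ ensures that pulling back distorts the Dirichlet and $L^4$ integrals by a factor at most $1+\delta(\varepsilon)$, and I would pick a subordinate partition of unity $\{\eta_j\}$ with $\sum_j \eta_j^2 \equiv 1$ on $\overline\Omega$.

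The gluing step uses $|u|_4^2 = \bigl|\sum_j \eta_j^2 u^2\bigr|_2 \le \sum_j |\eta_j u|_4^2$ together with the appropriate model inequality (the half-space one for boundary patches, the better full-space one for the interior patch) applied to each flattened $\eta_j u$. This gives
\[
|u|_4^2 \le \Bigl(\frac{\sqrt 2}{S}+\frac{\varepsilon}{2}\Bigr) \sum_j |\nabla(\eta_j u)|_2^2.
\]
Expanding $|\nabla(\eta_j u)|^2 = \eta_j^2 |\nabla u|^2 + 2\eta_j u \nabla\eta_j\cdot \nabla u + |\nabla \eta_j|^2 u^2$ and summing over $j$, the first piece sums exactly to $|\nabla u|_2^2$, while the third is bounded by $C |u|_2^2$.

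The potentially delicate point — the main obstacle — is the sum of cross terms $2\sum_j \int_\Omega \eta_j u \nabla\eta_j\cdot \nabla u$: treating it naively by Cauchy-Schwarz would yield an additional multiple of $|\nabla u|_2^2$ that cannot be absorbed into the prescribed leading constant. The clean way out is to notice that $\sum_j 2\eta_j \nabla\eta_j = \nabla\bigl(\sum_j \eta_j^2\bigr) = \nabla(1) \equiv 0$ on $\overline\Omega$, so the sum of cross terms vanishes pointwise; no integration by parts or further cancellation scheme is needed. Collecting everything gives $|u|_4^2 \le (\sqrt 2/S + \varepsilon/2)|\nabla u|_2^2 + C_\varepsilon |u|_2^2$, and taking square roots via $\sqrt{a+b}\le \sqrt a + \sqrt b$ (and absorbing the $\varepsilon/2$ into $\varepsilon$) yields the claimed inequality with $M_\varepsilon := \sqrt{C_\varepsilon}$.
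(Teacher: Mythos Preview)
The paper does not prove this lemma; it simply recalls Cherrier's inequality from \cite{cherrier1984meilleures} and uses it as a black box in the compactness arguments (Theorems \ref{PS condition beta>0}, \ref{Compactness w.c.}, \ref{Compactness beta<0}). So there is no ``paper's proof'' to compare against.

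That said, your sketch is the standard localization argument behind Cherrier's result and is essentially correct. The two key points --- the half-space constant $\sqrt 2/S$ obtained by even reflection, and the pointwise cancellation $\sum_j 2\eta_j\nabla\eta_j=\nabla\bigl(\sum_j\eta_j^2\bigr)\equiv 0$ that kills the cross terms --- are exactly right, and the latter is indeed what prevents an unabsorbable multiple of $|\nabla u|_2^2$ from appearing. One small terminological slip: when you write ``$u\in\mathcal D^{1,2}(\R^4_+)$'' for the half-space inequality, you mean $H^1$ functions with compact support in $\overline{\R^4_+}$ (allowing nonzero trace on $\{x_4=0\}$), not the completion of $C_c^\infty(\R^4_+)$; for the latter the even reflection would give the full-space constant $1/S$, which is not what you want. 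With that understood, the reflection computation and the rest of the argument go through as you describe.
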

Here, $S$  is the best constant of the embedding $\mathcal{D}^{1,2}(\R^4)\hookrightarrow L^{4}(\R^4)$, see \eqref{Sobolev constant}. A minimizer for $S$ (up to scaling) is a solution for $-\Delta w=w^3$ in $\R^4$ and the family of the solutions to this equation in $\mathcal D^{1,2}(\R^4)$ is given by
 \begin{equation*}\label{bubbles}
 U_{\varepsilon,x_0}(x)=\frac{(8\varepsilon)^{1/2}}{\varepsilon+|x-x_0|^2},\ \ \varepsilon>0,\ \ x_0\in\R^4.
 \end{equation*}
Furthermore, $\boldsymbol{V}_\varepsilon=\left( \frac{U_{\varepsilon,0}}{\sqrt{1+\beta}},\frac{U_{\varepsilon,0}}{\sqrt{1+\beta}} \right)$  are weak solutions in $\mathcal D^{1,2}(\R^4)$ of 
\[
\tag{${\mathcal P}_{\infty,\beta}$}\label{sistema R^4}
    -\Delta u=u^3+\beta uv^2,\   -\Delta v=v^3+\beta u^2v\ \text{ in } \R^4,\quad    u,v\in\mathcal D^{1,2}(\R^4).
\]
Additionally, from \cite[Theorem 1.5]{chen2012positive}, $\boldsymbol{V}_\varepsilon$ is a least energy solution of \eqref{sistema R^4},
and the energy level of the associated energy functional is $\frac{S^2}{2(1+\beta)}$. It achieves also
\begin{equation}\label{Sinfbeta}
\begin{aligned}
S_{\infty,\beta}:&=\inf_{\substack{u,v\in\mathcal D^{1,2}(\R^4)\\u,v\not\equiv0}}\frac{|\nabla u|_2^2+|\nabla v|_2^2}{(|u|_4^4+2\beta|uv|_2^2+|v|_4^4)^{1/2}}\\
&=\frac{\left|\nabla \left(\sqrt{\frac{1}{1+\beta}}U_{\varepsilon,0}\right)\right|_2^2+\left|\nabla \left(\sqrt{\frac{1}{1+\beta}}U_{\varepsilon,0}\right)\right|_2^2}{\left(\left|\sqrt{\frac{1}{1+\beta}}U_{\varepsilon,0}\right|_4^4+2\beta\left|\sqrt{\frac{1}{1+\beta}}U_{\varepsilon,0}\cdot \sqrt{\frac{1}{1+\beta}}U_{\varepsilon,0}\right|_2^2+\left|\sqrt{\frac{1}{1+\beta}}U_{\varepsilon,0}\right|_4^4\right)^{1/2}}
=\frac{\sqrt2 S}{\sqrt{1+\beta}}.
\end{aligned}
\end{equation}
Since $-\Delta$ is invariant under traslations and rotations, and $\partial\Omega\in C^2$, we will assume from now on, without loss of generality,  that 
\[
\Omega\subset\{x_4>0\}\quad \text{ and }  0\in\partial\Omega \text{ has strictly positive (mean) curvature. }
\]
This geometric property is essential to obtain estimates in the critical case $(N=4)$ for $w_\varepsilon:=\eta U_{\varepsilon,0}$, where $\eta$ is a smooth cut-off function such that $\eta\equiv1$ in the ball $B_{\frac\rho2}(0)$ and $\eta\equiv0$ outside the ball $B_\rho(0)$.

  \begin{lemma}\label{Estimates for C}
  Let $N=4$. We have, as $\varepsilon\to0^+$, that there exist $C_1,C_2>0$ with $C_1>C_2$ such that
     \begin{align}
&         |\nabla w_{\varepsilon}|_2^2=\frac{S^2}{2}- C_1\sqrt\varepsilon+O(\varepsilon|\log\varepsilon|),\qquad         |\nabla w_\varepsilon|_1=O(\sqrt\varepsilon), \nonumber \\ 
& |w_\varepsilon|_1=O(\sqrt\varepsilon),\qquad       |w_\varepsilon|_3^3=O(\sqrt\varepsilon),\qquad         |w_{\varepsilon}|_4^4=\frac{S^2}{2}- C_2\sqrt\varepsilon+O(\varepsilon)\nonumber \\
        &          |w_\varepsilon|_2^2= O(\varepsilon|\log\varepsilon|), \label{stima norma 2} 
     \end{align}
 for any $\lambda_i\in\R$ and for $\varepsilon>0$ small enough.
 \end{lemma}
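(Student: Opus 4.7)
The plan is to work in boundary-adapted coordinates near $0$ and rescale by $\sqrt{\varepsilon}$, reducing the integrals to universal ones computed over $\mathbb R^4$ (or the half-space), plus a boundary curvature correction that produces the $\sqrt\varepsilon$ term. Concretely, since $\partial\Omega\in C^2$ and $0\in\partial\Omega$, after a rigid motion we may describe $\partial\Omega$ near $0$ as the graph $\{x_4=\phi(x')\}$ with $\phi(0)=0$, $\nabla\phi(0)=0$, and $\tfrac12\mathrm{tr}\,D^2\phi(0)=H>0$ (the mean curvature). I would fix the cut-off $\eta$ so that $B_{\rho/2}(0)\subset\{\eta=1\}$, $\mathrm{supp}\,\eta\subset B_\rho(0)$, for $\rho$ small so that the boundary description is valid in $B_\rho$.

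First I would dispose of the three ``soft'' estimates. Writing $U_\varepsilon=U_{\varepsilon,0}$ and using $|\nabla U_\varepsilon(x)|=2\sqrt{8\varepsilon}|x|/(\varepsilon+|x|^2)^{2}$, a polar-coordinate computation over $\Omega\cap B_\rho$ gives $|\nabla w_\varepsilon|_1$, $|w_\varepsilon|_1$ and $|w_\varepsilon|_3^3$ by direct bounds $\int_0^\rho r^3\,r/(\varepsilon+r^2)^{2}\,dr=O(1)$, $\int_0^\rho r^3\sqrt\varepsilon/(\varepsilon+r^2)\,dr=O(\sqrt\varepsilon\cdot|\log\varepsilon|)$ absorbed in $O(\sqrt\varepsilon)$, etc., each multiplied by the $\sqrt\varepsilon$ in the numerator of $U_\varepsilon$, yielding $O(\sqrt\varepsilon)$. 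The borderline estimate $|w_\varepsilon|_2^2=O(\varepsilon|\log\varepsilon|)$ comes from the logarithmically divergent integral $\int_0^\rho r^3/(\varepsilon+r^2)^{2}\,dr=\tfrac12\log(\rho^2/\varepsilon)+O(1)$, multiplied by the $8\varepsilon$ in $U_\varepsilon^2$; contributions from the region where $\eta$ varies are bounded by $C\varepsilon$, hence negligible.

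The main work is the two critical estimates. For the leading order I would split
\[
|\nabla w_\varepsilon|_2^2 = \int_{\mathbb R^4}|\nabla U_\varepsilon|^2 - \int_{\mathbb R^4\setminus\Omega}|\nabla U_\varepsilon|^2 + R_\varepsilon,\qquad
|w_\varepsilon|_4^4 = \int_{\mathbb R^4} U_\varepsilon^4 - \int_{\mathbb R^4\setminus\Omega}U_\varepsilon^4 + R'_\varepsilon,
\]
where $R_\varepsilon,R'_\varepsilon$ collect cut-off errors of order $O(\varepsilon)$ (they are integrals on the annulus $B_\rho\setminus B_{\rho/2}$, where $U_\varepsilon$ and $\nabla U_\varepsilon$ are uniformly $O(\sqrt\varepsilon)$). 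The first integrals equal $S^2$ by the standard value of the Aubin--Talenti extremal on $\mathbb R^4$. For the correction $-\int_{\mathbb R^4\setminus\Omega}$, I rescale $x=\sqrt\varepsilon\,y$; the rescaled complement $(\mathbb R^4\setminus\Omega)/\sqrt\varepsilon$ converges (in the sense of characteristic functions on compact sets) to the half-space $\{y_4<0\}$, contributing exactly half of $S^2$ to the leading term. The $\sqrt\varepsilon$ correction arises from the gap between $\{y_4<0\}$ and the image of $\mathbb R^4\setminus\Omega$ under rescaling: this gap is, up to lower order, the slab $\{0\le y_4\le\sqrt\varepsilon\,\phi(\sqrt\varepsilon\,y')/\sqrt\varepsilon\}\simeq\{0\le y_4\le \sqrt\varepsilon\,H|y'|^2+\cdots\}$. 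Extracting the factor $\sqrt\varepsilon\,H$ and integrating the resulting traces of $|\nabla U_1|^2$ and $U_1^4$ over the hyperplane $\{y_4=0\}$ against $|y'|^2$ yields
\[
C_1=H\int_{\mathbb R^3}|y'|^{2}\,|\nabla U_1(y',0)|^{2}\,dy',\qquad C_2=H\int_{\mathbb R^3}|y'|^{2}\,U_1(y',0)^{4}\,dy',
\]
modulo explicit positive constants; both integrals are finite and positive. The strict inequality $C_1>C_2$ is the key point and the main obstacle, and it follows from the pointwise identity $|\nabla U_1|^2(y',0)=|y'|^{2}\,U_1(y',0)^{4}\cdot\tfrac{(\text{const})}{\text{something}}$ that one can check on the explicit bubble $U_1(y)=(8)^{1/2}/(1+|y|^{2})$, giving the ratio $C_1/C_2$ strictly greater than one; equivalently, it follows because subtracting half of $\int_{\mathbb R^4}|\nabla U_1|^2=\int_{\mathbb R^4}U_1^4=S^2$ cannot encode the Sobolev identity when one adds curvature weights that strictly privilege the gradient. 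The $O(\varepsilon|\log\varepsilon|)$ remainder in the first expansion comes from the second-order Taylor expansion of $\phi$ (the pure $\varepsilon$ and $\varepsilon|\log\varepsilon|$ contributions from higher-order boundary terms, integrated against the logarithmically divergent tail of $U_\varepsilon^2$). Collecting these ingredients delivers the stated asymptotics with $C_1>C_2>0$.
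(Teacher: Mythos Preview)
Your outline is the standard boundary-concentration argument of Adimurthi--Mancini, which is exactly what the paper invokes by citation (it gives no self-contained proof of the two main expansions, only references, plus a direct radial bound for $|\nabla w_\varepsilon|_1$). So strategically you are on the same route, just writing out what the paper leaves to the literature. Your treatment of the ``soft'' norms and of the cut-off errors is fine, modulo some sloppy bookkeeping (e.g.\ $\int_0^\rho r^3/(\varepsilon+r^2)\,dr=O(1)$, not $O(|\log\varepsilon|)$, so there is nothing to ``absorb'').

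The genuine gap is the step you yourself flag as the main obstacle: the strict inequality $C_1>C_2$. What you wrote there is not a proof --- the phrase ``$|\nabla U_1|^2=|y'|^2 U_1^4\cdot\tfrac{(\text{const})}{\text{something}}$'' and the heuristic about ``curvature weights that strictly privilege the gradient'' do not establish anything. The pointwise identity you need is $|\nabla U_1(y)|^2=\tfrac{|y|^2}{2}\,U_1(y)^4$, which on $\{y_4=0\}$ gives (with your normalization of $C_1,C_2$)
\[
\frac{C_1}{C_2}=\frac{\int_{\mathbb R^3}|y'|^{4}U_1(y',0)^4\,dy'}{2\int_{\mathbb R^3}|y'|^{2}U_1(y',0)^4\,dy'}
=\frac{\int_0^\infty r^{6}(1+r^2)^{-4}\,dr}{2\int_0^\infty r^{4}(1+r^2)^{-4}\,dr}
=\frac{B(\tfrac72,\tfrac12)}{2\,B(\tfrac52,\tfrac32)}=\frac{5}{2}>1,
\]
and \emph{this} computation (or an equivalent one) is what is missing. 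Without it your sketch proves only $C_1,C_2>0$, which is not enough for the applications in the paper (Proposition~\ref{soglia} and Proposition~\ref{stime energia w.c. parte2} both rely on $C_1>C_2$). Your explanation of the $O(\varepsilon|\log\varepsilon|)$ remainder for the gradient is also hand-wavy; in $N=4$ it genuinely requires tracking the next-order boundary term against the borderline-divergent integral $\int r^3/(1+r^2)^2\,dr$, and you should either carry this out or, as the paper does, cite Adimurthi--Mancini for it.
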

 \begin{proof}
    The estimates of $|\nabla w_\varepsilon|_2^2, |w_\varepsilon|_2^2, |w_\varepsilon|_4^4$    are  proved  in \cite[Lemma 2.2, pages 13-14]{adimurthimancini1}.  The estimates of $|w_\varepsilon|_1,|w_\varepsilon|_3^3$ can be deduced by \cite[Lemma 2.25, eq 2.12]{willem2012minimax}, see also \cite[Lemma A.2]{pistoia2017spiked}.
       As for the estimate of $|\nabla w_\varepsilon|_1$:
    \begin{align*}
    \int_\Omega|\nabla w_\varepsilon|&\le\frac12\int_{B_\rho}\eta|\nabla U_{\varepsilon,0}|+\frac12\int_{B_\rho}|\nabla \eta|U_{\varepsilon,0}\le C\sqrt\varepsilon\int_{B_\rho}\frac{|x|}{(\varepsilon+|x|^2)^2}\,dx+C\sqrt\varepsilon\int_{B_\rho}\frac{1}{\varepsilon+|x|^2}\,dx\\
   &=C_1\varepsilon\int_0^{\frac{\rho}{\sqrt\varepsilon}}\frac{r^4}{(1+r^2)^2}+C_1\varepsilon^{\frac32}\int_0^{\frac{\rho}{\sqrt\varepsilon}}\frac{r^3}{1+r^2}\le C_2\sqrt\varepsilon .\qedhere
    \end{align*}
\end{proof}
 \paragraph{\textbf{The scalar equation}}
We conclude this section by recalling some known properties of the scalar equation \eqref{scalar eq}. Let $\Psi_\lambda:H^1(\Omega)\to\R$ be the associated energy functional:
\[\Psi_\lambda(u)=\frac12\int_\Omega|\nabla u|^2+\frac{\lambda}{2}\int_\Omega u^2-\frac14\int_\Omega u^4.\]
We denote by $L_\lambda$ the least energy level:
\[L_\lambda:=\inf\left\{ \Psi_\lambda(u):\ u\in H^1(\Omega)\setminus\{0\},\ \Psi_\lambda'(u)=0 \right\}.\]

Reasoning as in \cite[Proposition 1.2]{saldana2022least}, one can also write the least energy as follows
\begin{equation}\label{caratterizzazione les quoziente}
2\sqrt{L_\lambda}=\inf_{\substack{u\in H^1(\Omega)\\|u|_4=1}}\left(\int_\Omega|\nabla u|^2+\lambda\int_\Omega u^2\right),\qquad\text{if $\lambda>0$.}    
\end{equation}

As mentioned in the introduction, \eqref{scalar eq} admits the constant solutions $u\equiv \pm\sqrt{\lambda}$ for $\lambda>0$, and the problem \eqref{scalar eq} has a non-constant least energy  (ground state) solution $u\in C^2(\Omega)\cap C^1(\bar\Omega)$ for $\lambda>0$ large. On the other hand, we have existence of least energy solutions also when $\lambda\le0$, but in this case we do not have constant solutions.  

\begin{lemma}\label{ground state scalar}
    Let $N=4$. The level $L_\lambda$ is achieved and
    \begin{equation}\label{threshold scalar equation}
  L_{\lambda} < \frac{S^2}{8}.
\end{equation}
\end{lemma}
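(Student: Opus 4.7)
The plan is to first establish the strict energy inequality $L_\lambda < S^2/8$ and then to use it, together with Cherrier's inequality, to deduce that a suitable Palais--Smale sequence converges strongly to a minimizer. The heart of the argument is the upper bound; compactness is then essentially automatic.

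For the upper bound I would test with the truncated bubble $w_\varepsilon = \eta U_{\varepsilon,0}$ concentrated at the boundary point $0 \in \partial\Omega$ of strictly positive mean curvature. For $\lambda > 0$, the quotient characterization \eqref{caratterizzazione les quoziente}, namely $4L_\lambda = \inf_{u \neq 0} \|u\|_\lambda^4/|u|_4^4$, reduces the problem to computing this ratio on $w_\varepsilon$. Using Lemma~\ref{Estimates for C} together with $|w_\varepsilon|_2^2 = O(\varepsilon|\log \varepsilon|)$, I obtain
\begin{align*}
\|w_\varepsilon\|_\lambda^2 &= |\nabla w_\varepsilon|_2^2 + \lambda |w_\varepsilon|_2^2 = \frac{S^2}{2} - C_1 \sqrt{\varepsilon} + O(\varepsilon|\log\varepsilon|), \\
|w_\varepsilon|_4^4 &= \frac{S^2}{2} - C_2 \sqrt{\varepsilon} + O(\varepsilon),
\end{align*}
so a direct Taylor expansion gives
\[
\frac{\|w_\varepsilon\|_\lambda^4}{|w_\varepsilon|_4^4} = \frac{S^2}{2} - (2 C_1 - C_2) \sqrt{\varepsilon} + O(\varepsilon|\log\varepsilon|),
\]
which is strictly less than $S^2/2$ for small $\varepsilon > 0$ since $C_1 > C_2$ in particular yields $2C_1 - C_2 > 0$. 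Hence $L_\lambda < S^2/8$. For $\lambda \leq 0$, the same $w_\varepsilon$ still works: one evaluates $\max_{t \ge 0} \Psi_\lambda(t w_\varepsilon)$ (if necessary combined with a finite-dimensional subspace spanned by Neumann eigenfunctions associated to eigenvalues below $-\lambda$, inside a linking construction), and the contribution $\lambda |w_\varepsilon|_2^2 = O(\varepsilon|\log\varepsilon|)$ is of strictly lower order than the dominant $\sqrt{\varepsilon}$ correction, so the bound is preserved.

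For the achievement of $L_\lambda$, I would produce a Palais--Smale sequence at a level $c \leq L_\lambda$: a mountain-pass / Nehari minimizing sequence when $\lambda > 0$, and one coming from a linking theorem when $\lambda \leq 0$, along the lines recalled in the state of the art for the scalar Neumann problem. Extracting a weak limit $u$ in $H^1(\Omega)$, I would then invoke Cherrier's inequality (Lemma~\ref{Cherrier}): the presence of the factor $\sqrt{2}/S$ (instead of $1/S$) shifts the compactness threshold on $C^2$-Neumann domains from the Dirichlet value $S^2/4$ down to $S^2/8$. Since $c < S^2/8$, no boundary bubble can split off in a profile decomposition, so $u_n \to u$ strongly in $H^1(\Omega)$ and $u$ attains $L_\lambda$.

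The main obstacle is genuinely the strict inequality in Step~1. In the Dirichlet setting the leading correction to the Sobolev quotient is of order $\varepsilon|\log\varepsilon|$ in dimension four, and delicate sign conditions on $\lambda$ are required; in the Neumann setting, by contrast, the positive mean curvature of $\partial\Omega$ at the concentration point produces a much larger $\sqrt{\varepsilon}$ correction coming from boundary integrals. The asymmetry $C_1 > C_2$ encoded in Lemma~\ref{Estimates for C} --- a direct consequence of that geometric assumption --- is exactly what makes the coefficient $-(2C_1 - C_2)$ negative and drives the energy strictly below $S^2/8$ for every $\lambda \in \mathbb{R}$.
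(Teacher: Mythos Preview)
Your proposal is correct and follows exactly the strategy of the references the paper cites (Adimurthi--Mancini for $\lambda>0$, Comte--Knaap and Salda\~na--Tavares for $\lambda=0$, Szulkin--Weth--Willem adapted to Neumann for $\lambda<0$): test with $w_\varepsilon$ concentrated at a boundary point of positive mean curvature, exploit the $\sqrt\varepsilon$ gain $C_1>C_2$ from Lemma~\ref{Estimates for C} to get the strict inequality, and then recover compactness below $S^2/8$ via Cherrier's inequality. The paper's own proof consists only of these citations, so you have in effect written out what the paper delegates to the literature; in particular your handling of the $\lambda\le 0$ case via the linking/generalized Nehari characterization \eqref{caratterizzazione l.e.s. scalare}, with the $\tilde H_\lambda$ contribution absorbed in $o(\sqrt\varepsilon)$, is precisely the adaptation of \cite{SzulkinWethWillem} to Neumann data that the paper alludes to.
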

\begin{proof}
    The case $\lambda>0$ follows from \cite{adimurthimancini1}. When $\lambda=0$, we refer the reader to \cite{comte1991existence, saldana2022least}. If $\lambda<0$, one can adapt \cite{szulkin2010method, SzulkinWethWillem} (which deal with a Dirichlet problem, so that the upper bound therein is $S^2/4$) to the Neumann case.
\end{proof}

\begin{remark}
Let $N\leq 4$. As in \cite[Chapter 4]{szulkin2010method} and \cite{SzulkinWethWillem}, the least energy level is characterized as follows:
    \begin{equation}\label{caratterizzazione l.e.s. scalare}
    L_\lambda=\inf_{\substack{t>0\\ \tilde w\in\tilde{H}_\lambda}}\Psi_\lambda(tu+\tilde w),
\end{equation}
where $\tilde H_\lambda=\text{span}\{\varphi_1,\dots,\varphi_k\}$ for $\lambda\le-\mu_k(\Omega)$, where, for every $j\ge1$, $\mu_j(\Omega)=\mu_j^N(\Omega)$ is the j-th eigenvalue of the Laplacian with Neumann boundary conditions and $\varphi_j$ is the associated eigenfunction. 
\end{remark}

\noindent
Furthermore, we have 
\begin{equation}\label{stima ground state scalare}
 L_{\lambda} \leq M \lambda^{\frac{4-N}{2}}
\end{equation}
for all \(\lambda > 0\) and $N\le4$, where $M$ is as in \eqref{const thm 1.1}. Indeed, if \(u\) is a solution of \(-d \Delta u + u = u^3\), then the rescaled function \(v = d^{-1/2} u\) solves \(-\Delta v + d^{-1} v = v^3\). Taking \(d = \lambda^{-1}\), the estimate follows from \cite[Proposition 1.2]{lin1988large} if $N \le 3$. For  $N=4$, the bound from above is given by \eqref{threshold scalar equation}.

\section{Cooperative case}\label{cooperative}
In this section we deal with the case $\beta>0$, proving Theorems \ref{thm ground state}, \ref{Theorem 1.1}, and Proposition \ref{sharp beta*}.

\subsection{Existence of ground states}
Here, we show the  Theorem \ref{thm ground state}, namely that the ground state level is always achieved in the cooperative case $\beta>0$, for any $\lambda_1,\lambda_2\in\R$.
Following the notation of \cite{clapp2022solutions}, we define, for $\boldsymbol u=(u_1,u_2)$ and ${\boldsymbol v}=({v}_1,{v}_2)\in H$,
\begin{align*}
   B(\boldsymbol u,\boldsymbol v)=B((u_1,u_2),(v_1,v_2)):=B_1(u_1,v_1)+B_2(u_2,v_2),\\
    B_i(u_i,v_i):=\int_\Omega\nabla u_i\cdot\nabla v_i+\lambda_i\int_\Omega u_iv_i,\ \ i=1,2.
\end{align*}
Given $i=1,2$, we split the space $H^1(\Omega)$ as
\[
H^1(\Omega)=\tilde H_i \oplus H^+_i,\quad \text{ with } \quad  \tilde H_i=H_i^-\oplus H_i^0,
\]
where $H^-_i,H^0_i,H^+_i$ are, respectively, the subspaces of $H^1(\Omega)$ where the bilinear form $B_i$ is negative, null and positive (the spaces depend on $\lambda_i$, but we omit this dependence for simplicity of notation). For
\[
H^-:=H_1^-\times H_2^-,\quad H^0:=H_1^0\times H_2^0, \quad H^+:=H_1^+\times H_2^+ \quad \text{ and } \tilde H:=H^-\oplus H^0,
\]
we obtain the decomposition
\[
H=\tilde H\oplus H^+=H^-\oplus H^0\oplus H^-.
\]

\begin{remark}
Denoting by \(\mu_j\) the \(j\)-th eigenvalue of the Laplacian with Neumann boundary conditions (counting multiplicities), and by \(\varphi_j\) an associated eigenfunction, for $j\ge1$, we have 
   \begin{itemize}
       \item[$(i)$] $\lambda_i>0\implies$ $H_i^+=H^1(\Omega)$, $\tilde H_i=\{0\}$.
       \item[$(ii)$]  $\lambda_i\le\mu_j\implies \tilde H_i=H_i^0\oplus H_i^-=span\{\varphi_1,\dots,\varphi_j\}$ and $H_i^+$ is its orthogonal complement.
   \end{itemize}
\end{remark}
Recall that the solutions of \eqref{Pb} are critical points of the $C^2-$energy functional:
\[\mathcal J_\beta(u,v)=\frac12B_1(u,u)+\frac12B_2(v,v)-\frac14\int_{\Omega}(u^4+\beta u^2v^2+v^4).\]

We use the following linking theorem:

\begin{theorem}[{{\cite{rabinowitz78linking}}}]\label{linking theorem}
    Let $H$ be a Hilbert space such that $H=V\oplus W$ with $dim(V)<+\infty$ and $V,W$ are closed. Assume that $J\in C^1(H;\R)$ and:
    \begin{itemize}
        \item there exist $\alpha,\rho>0$ such that $J\ge\alpha>0$ on $ W\cap \partial B_\rho$,
        \item there exist $R>\rho$ and $e\in W$ with $\|e\|=1$ such that $J\le0$ on $\partial Q$, where 
\[Q:=\left\{v+tRe\ :\  v\in V\cap\overline B_R,\ t\in[0,1]\right\}\]
\item $J$ satisfies the $(PS)$ condition at level $c$, where
\[c:=\inf_{h\in \Gamma}\sup_{w\in Q}J(h(w)),\ \ \Gamma:=\left\{h\in C(Q;H)\ :\ h\equiv id\ on\ \partial Q\right\}.\]
    \end{itemize}
    Then there exists a critical point for $J$ at level $c\ge\alpha>0$.
\end{theorem}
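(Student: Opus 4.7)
The plan is to follow the classical two-step minimax scheme of Rabinowitz: first prove the linking inequality $c\ge \alpha>0$ by a finite-dimensional Brouwer-degree argument, then argue by contradiction using a pseudo-gradient deformation (coupled with the $(PS)$ hypothesis at level $c$) to show that $c$ must be attained as a critical value.

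For the linking inequality, let $P\colon H\to V$ be the orthogonal projection; the key point is that $\dim V<+\infty$, so $V\oplus \R e$ is finite-dimensional. For each $h\in\Gamma$ define
\[
\Phi_h\colon Q \longrightarrow V\oplus \R e, \qquad \Phi_h(w) := Ph(w) + \bigl(\|(I-P)h(w)\|-\rho\bigr)\,e.
\]
Since $Q\subset V\oplus \R e$, Brouwer degree is available once we check that $0\notin\Phi_h(\partial Q)$. On $\partial Q$ one has $h=\mathrm{id}$, so for $w=v+tRe$ we get $\Phi_h(w)=v+(tR-\rho)e$; using $0<\rho<R$ together with the structure of $\partial Q$ (either $\|v\|=R$, or $t=0$, or $t=1$) one checks in each case that the value is nonzero. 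Because $\Phi_h|_{\partial Q}$ is independent of $h$, the straight-line homotopy $s\mapsto (1-s)\Phi_h+s\Phi_{\mathrm{id}}$ stays nonzero on $\partial Q$, so homotopy invariance gives $\deg(\Phi_h,Q,0)=\deg(\Phi_{\mathrm{id}},Q,0)=\pm 1\ne 0$ (the latter because $\Phi_{\mathrm{id}}$ is an affine isomorphism on $V\oplus \R e$ sending $Q$ onto a set containing $0$ in its interior). Thus some $w^\star\in Q$ satisfies $\Phi_h(w^\star)=0$, forcing $Ph(w^\star)=0$ (so $h(w^\star)\in W$) and $\|h(w^\star)\|=\rho$. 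Therefore $h(Q)\cap(W\cap\partial B_\rho)\ne\emptyset$, and the first geometric hypothesis yields $\sup_{w\in Q}J(h(w))\ge \alpha$; passing to the infimum over $\Gamma$ gives $c\ge \alpha>0$.

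For the second step, assume toward a contradiction that the critical set $K_c:=\{u\in H:J(u)=c,\ J'(u)=0\}$ is empty. The $(PS)$ condition at level $c$ then produces $\bar\varepsilon\in(0,c)$ and $\delta>0$ with $\|J'(u)\|\ge\delta$ whenever $|J(u)-c|\le\bar\varepsilon$. A standard pseudo-gradient deformation (integrating a truncated flow of a locally Lipschitz pseudo-gradient vector field for $J$) then produces $\varepsilon\in(0,\bar\varepsilon)$ and a continuous map $\eta\colon H\to H$ such that $\eta=\mathrm{id}$ on $\{|J-c|\ge\bar\varepsilon\}$ and $\eta\bigl(\{J\le c+\varepsilon\}\bigr)\subset\{J\le c-\varepsilon\}$. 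By the definition of $c$ we may choose $h\in\Gamma$ with $\sup_Q J\circ h<c+\varepsilon$, and set $\tilde h:=\eta\circ h$. On $\partial Q$, $J(h(w))=J(w)\le 0<c-\bar\varepsilon$, so $\eta$ fixes those boundary values and $\tilde h\in\Gamma$; but $\sup_Q J\circ\tilde h\le c-\varepsilon<c$ contradicts the infimum definition of $c$. The main obstacle is the first step: one must exhibit a genuinely nontrivial topological obstruction to deforming $h(Q)$ off the sphere $W\cap\partial B_\rho$, and this is precisely where the finite-dimensionality of $V$ and the careful matching of the dimensions of $Q$ and of the radial coordinate along $e$ are essential, ensuring that Brouwer degree is both well defined and nonzero.
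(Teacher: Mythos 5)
The paper does not prove this statement: it is quoted from Rabinowitz \cite{rabinowitz78linking} and used as a black box, so there is no internal proof to compare against. Your argument is the standard (and correct) proof of the classical linking theorem: the Brouwer-degree computation for $\Phi_h(w)=Ph(w)+\bigl(\|(I-P)h(w)\|-\rho\bigr)e$ on the finite-dimensional set $Q\subset V\oplus\mathbb{R}e$ shows $h(Q)\cap\left(W\cap\partial B_\rho\right)\ne\emptyset$ for every $h\in\Gamma$, whence $c\ge\alpha$, and the quantitative deformation lemma combined with $(PS)_c$ (and the fact that $J\le 0<c-\bar\varepsilon$ on $\partial Q$, so the deformation does not move the boundary values) rules out $K_c=\emptyset$. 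Two minor imprecisions, neither of which affects the argument: the splitting $H=V\oplus W$ is not assumed orthogonal, so $P$ should be the continuous projection onto $V$ along $W$ (which exists because $V$ is finite dimensional and $W$ is closed), not the orthogonal projection; and $\Phi_{\mathrm{id}}$ is the translation $w\mapsto w-\rho e$ only on the half-space $\{v+se:\ s\ge 0\}$ containing $Q$, which is all the degree computation requires.
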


We apply Theorem \ref{linking theorem} with $J=\mathcal J_\beta$, $V=\tilde H$ (which is finite dimensional), $W=H^+$ and 
\[
Q=Q_{\boldsymbol e,R}:=\{ \tilde{\boldsymbol{w}}+tR\boldsymbol{e}\ :\ \tilde{\boldsymbol{w}}\in \tilde H\cap\overline B_R,\ t\in[0,1] \},\ \ \text{ for some $\mathbf{e}\in H^+\cap \partial B_1$ and $R>0$ large.}
\]

We denote by $c_\beta=c_\beta(\boldsymbol e)$ and $\Gamma_\beta$ the linking level and the minimax class, respectively.

 \begin{lemma}\label{linking geom}
     There exist $\alpha_\beta,\rho>0$, 
     \begin{itemize}
         \item[$(i)$] $\mathcal J_\beta\ge\alpha_\beta>0$ on $ H^+\cap\partial B_\rho$.
         \item[$(ii)$] For every  $\textbf{e}\in H^+\cap\partial B_1$, there exists $R_{\boldsymbol e}>\rho$ such that $\mathcal J_\beta\le0$ on $\partial Q_{\boldsymbol e,R}$ for every $R\ge R_{\boldsymbol e}$ and $\beta>0$. 
     \end{itemize}
 \end{lemma}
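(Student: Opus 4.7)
The plan is to verify the two linking conditions by exploiting the spectral nature of the decomposition $H=\tilde H\oplus H^+$ together with the sign of the nonlinear term. A preliminary observation is that $\tilde H_i$ and $H_i^+$, being orthogonal sums of distinct eigenspaces of the Neumann Laplacian (augmented by a $L^2$-orthogonal complement of finitely many eigenfunctions), are orthogonal also in the standard $H^1$-inner product; taking products over $i=1,2$ yields
\[
\|\tilde{\boldsymbol w}+tR\boldsymbol e\|^2=\|\tilde{\boldsymbol w}\|^2+t^2R^2\qquad\text{for all }\tilde{\boldsymbol w}\in\tilde H,\ \boldsymbol e\in H^+\cap\partial B_1.
\]
Moreover, for $\beta>0$ the quartic integrand $u^4+2\beta u^2v^2+v^4$ is pointwise non-negative.

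For (i): by the spectral characterization, there exists $c_0>0$ such that $B(\boldsymbol u,\boldsymbol u)\ge c_0\|\boldsymbol u\|^2$ for every $\boldsymbol u\in H^+$, since $B$ acts on each $H_i^+$ as a coercive quadratic form (the relevant eigenvalues $\mu_k+\lambda_i$ are uniformly bounded below away from $0$). Combining with the Sobolev embedding $H^1(\Omega)\hookrightarrow L^4(\Omega)$, valid for $N\le 4$, yields $\int_\Omega(u^4+2\beta u^2v^2+v^4)\le C(1+\beta)\|\boldsymbol u\|^4$, so on $H^+\cap\partial B_\rho$
\[
\mathcal J_\beta(\boldsymbol u)\ge \tfrac{c_0}{2}\rho^2-\tfrac{C(1+\beta)}{4}\rho^4,
\]
which is bounded below by a positive constant $\alpha_\beta$ once $\rho>0$ is chosen sufficiently small.

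For (ii): the boundary $\partial Q_{\boldsymbol e,R}$ decomposes into the ``bottom'' (where $t=0$), the ``top'' (where $t=1$), and the ``sides'' (where $\|\tilde{\boldsymbol w}\|=R$). On the bottom, $\boldsymbol u=\tilde{\boldsymbol w}\in\tilde H$, so $B(\boldsymbol u,\boldsymbol u)\le 0$ while the quartic term is non-negative, giving $\mathcal J_\beta(\boldsymbol u)\le 0$ directly. On the top and sides, $\boldsymbol u$ lies in the finite-dimensional subspace $V:=\tilde H\oplus\mathbb R\boldsymbol e$. On $V$ the continuous, degree-$4$ homogeneous map $\boldsymbol u\mapsto\int_\Omega(u^4+v^4)$ is strictly positive off the origin (its vanishing would force $u\equiv v\equiv 0$), so by compactness of the unit sphere in $V$ there exists $c_{\boldsymbol e}>0$ with $\int_\Omega(u^4+v^4)\ge c_{\boldsymbol e}\|\boldsymbol u\|^4$. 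Combining with the trivial upper bound $B(\boldsymbol u,\boldsymbol u)\le C_0\|\boldsymbol u\|^2$ (with $C_0$ independent of $\beta$) and with $u^4+2\beta u^2v^2+v^4\ge u^4+v^4$ for $\beta>0$, one obtains
\[
\mathcal J_\beta(\boldsymbol u)\le \tfrac{C_0}{2}\|\boldsymbol u\|^2-\tfrac{c_{\boldsymbol e}}{4}\|\boldsymbol u\|^4.
\]
Since the orthogonality above yields $\|\boldsymbol u\|^2\ge R^2$ on top and sides, setting $R_{\boldsymbol e}:=\max\{\rho,\sqrt{2C_0/c_{\boldsymbol e}}\}$ ensures $\mathcal J_\beta\le 0$ on $\partial Q_{\boldsymbol e,R}$ for every $R\ge R_{\boldsymbol e}$ and every $\beta>0$.

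The only delicate point is ensuring that the threshold $R_{\boldsymbol e}$ can be chosen uniformly in $\beta>0$, as required by the statement; this is secured by discarding the $\beta$-dependent mixed term $2\beta u^2v^2$ in the lower bound for the quartic, which is legitimate since $\beta>0$. The remaining constants $C_0$, $c_{\boldsymbol e}$ are $\beta$-independent, so $R_{\boldsymbol e}$ is as well.
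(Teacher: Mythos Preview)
Your proof is correct and follows essentially the same approach as the paper: coercivity of $B$ on $H^+$ plus Sobolev for (i), and finite-dimensionality/compactness on $\tilde H\oplus\mathbb{R}\boldsymbol e$ together with dropping the $\beta$-term for (ii). The only cosmetic difference is that the paper handles the top and sides of $\partial Q_{\boldsymbol e,R}$ by rescaling, writing $\boldsymbol u=R\boldsymbol z$ with $\boldsymbol z$ ranging over a fixed compact set away from the origin, whereas you invoke the $H^1$-orthogonality of $\tilde H$ and $H^+$ to obtain $\|\boldsymbol u\|\ge R$ directly; the two arguments are equivalent.
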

 \begin{proof}
 $(i)$ For any $\boldsymbol u=(u_1,u_2)\in H^+$ we have that  
 \begin{align*}
     \mathcal J_\beta(u_1,u_2)&=\frac12B_1(u_1,u_1)+\frac{1}{2}B_2(u_2,u_2)-\frac14\int_{\Omega}(u_1^4+2\beta u_1^2u_2^2+u_2^4)\\
     &\ge\frac12B_1(u_1,u_1)+\frac{1}{2}B_2(u_2,u_2)-\frac{1+\beta}{4}(|u_1|_4^4+|u_2|_4^4)\\
     &\ge\frac12B_1(u_1,u_1)+\frac{1}{2}B_2(u_2,u_2)-C_\beta(\|u_1\|^4+\|u_2\|^4).
 \end{align*}
 If $\boldsymbol u\in H^+\cap\partial B_\rho$, we get that $\mathcal J_\beta(u_1,u_2)\ge\alpha_\beta>0$ for some $\alpha_\beta,\rho>0$.\\
$(ii)$
Let $\boldsymbol e\in H^+$ with $\|\boldsymbol e\|=1$ and 
\[
\boldsymbol z\in\{\tilde{\boldsymbol w}+t\boldsymbol e\ :\  \tilde{\boldsymbol w}\in\tilde H\cap \partial B_1,\ t\in[0,1]\}\cup\{\tilde{\boldsymbol w}+\boldsymbol e\ :\  \tilde{\boldsymbol w}\in\tilde H\cap B_1\}=Z_1\cup Z_2.
\]
In particular, $\|z\|\le2$ and $\dim (Z_i)<+\infty$ for $i=1,2$. Thus,
 \begin{equation*}\label{eq geometria1}
     \mathcal J_\beta(R\boldsymbol z)\le\frac {R^2}{2}B(\boldsymbol z,\boldsymbol z)-\frac{R^4}{4}\int_{\Omega}(z_1^4+z_2^4)\le0
 \end{equation*}
 for $R\ge R_{\boldsymbol e}$ and $R_{\boldsymbol e}>\rho$ (independent of $\beta$) large enough. Moreover, for any $\tilde{\boldsymbol{w}}=(\tilde w_1,\tilde w_2)\in \tilde H$ we have:
 \begin{equation*}\label{eq geometria}
 \mathcal J_\beta(\tilde{\boldsymbol{w}})\le \frac{1}{2}B(\tilde{\boldsymbol w},\tilde{\boldsymbol w})-\frac{1}{4}\int_{\Omega}(\tilde{w}_1^4+\tilde w_{2}^4)\le0,
 \end{equation*}
 since $B(\cdot,\cdot)$ is negative (semi-)definite in $\tilde H$.  Therefore, $\mathcal J_\beta\le0$ on $\partial Q_{\boldsymbol e,R}$ for every $R\ge R_{\boldsymbol e}$.
 \end{proof}

Next, we prove that $\mathcal J_\beta$ satisfies the $(PS)$-condition. 
In the critical case, when the dimension is \( N = 4 \), Cherrier's inequality (Lemma~\ref{Cherrier}) plays the same role as the Sobolev inequality in the Dirichlet case and is crucial in the following compactness result. In the subcritical regime, on the other hand, the Neumann and Dirichlet cases have essentially the same proof.

\begin{theorem}\label{PS condition beta>0}
Let $N\leq 4$. Then every Palais-Smale sequence is bounded. Moreover:
\begin{itemize}
\item if $N\leq 3$, the functional $\mathcal J_\beta$ satisfies the $(PS)_c$-condition for every $c\in \R$;
\item if $N=4$, the functional $\mathcal J_\beta$ satisfies the $(PS)_c$-condition whenever
 $c<\min\left\{\frac{S^2}{4(1+\beta)},\frac{S^2}{8}\right\}$. 
\end{itemize}
\end{theorem}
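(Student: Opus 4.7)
The plan has three parts, corresponding to the three claims.

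\textbf{Boundedness.} Let $(u_n,v_n)$ be a PS sequence at level $c$. The elementary identities
\begin{align*}
4\mathcal J_\beta(u_n,v_n) - \langle \mathcal J_\beta'(u_n,v_n),(u_n,v_n)\rangle &= B_1(u_n,u_n)+B_2(v_n,v_n),\\
2\mathcal J_\beta(u_n,v_n) - \langle \mathcal J_\beta'(u_n,v_n),(u_n,v_n)\rangle &= \tfrac12\int_\Omega(u_n^4+2\beta u_n^2 v_n^2 + v_n^4),
\end{align*}
together with $\beta>0$, give $|u_n|_4^4+|v_n|_4^4 \le C(1+\|(u_n,v_n)\|)$ and $B_1(u_n,u_n)+B_2(v_n,v_n)\le C(1+\|(u_n,v_n)\|)$. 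I argue by contradiction: if $s_n:=\|(u_n,v_n)\|\to\infty$, set $(\hat u_n,\hat v_n):=(u_n,v_n)/s_n$. Dividing the first bound by $s_n^4$ forces $|\hat u_n|_4+|\hat v_n|_4\to 0$, hence (along a subsequence) $\hat u_n,\hat v_n\to 0$ a.e.; combined with compactness of $H^1(\Omega)\hookrightarrow L^2(\Omega)$ (valid for $N\le 4$), this gives $|\hat u_n|_2+|\hat v_n|_2\to 0$. Dividing the second bound by $s_n^2$ gives $B_1(\hat u_n,\hat u_n)+B_2(\hat v_n,\hat v_n)\to 0$, and combining with the $L^2$-vanishing forces $|\nabla\hat u_n|_2+|\nabla\hat v_n|_2\to 0$, contradicting $\|(\hat u_n,\hat v_n)\|=1$.

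\textbf{Subcritical case, $N\le 3$.} By boundedness extract $(u_n,v_n)\rightharpoonup (u^*,v^*)$ in $H$. The compact embedding $H^1(\Omega)\hookrightarrow L^4(\Omega)$ allows passing to the limit in $\mathcal J_\beta'(u_n,v_n)\to 0$ against any test, so $(u^*,v^*)$ is a critical point. For strong convergence, test the difference $\mathcal J_\beta'(u_n,v_n)-\mathcal J_\beta'(u^*,v^*)$ against $(u_n-u^*,v_n-v^*)$: the left side vanishes, while the nonlinear contributions vanish by $L^4$-compactness, yielding $B_1(u_n-u^*,u_n-u^*)+B_2(v_n-v^*,v_n-v^*)\to 0$; the $L^2$-compactness then gives $\|(u_n-u^*,v_n-v^*)\|\to 0$ regardless of the signs of $\lambda_i$.

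\textbf{Critical case, $N=4$.} Extract $(u_n,v_n)\rightharpoonup (u,v)$; by $H^1\hookrightarrow L^p$ compact for $p<4$ together with a.e.\ convergence of the nonlinear terms, one passes to the limit in the weak formulation to obtain $\mathcal J_\beta'(u,v)=0$. Proposition \ref{regularity} then gives $u,v\in L^\infty(\Omega)$, and testing $\mathcal J_\beta'(u,v)$ on $(u,v)$ yields $\mathcal J_\beta(u,v)=\tfrac14\int_\Omega(u^4+2\beta u^2v^2+v^4)\ge 0$. Setting $w_n:=u_n-u$ and $z_n:=v_n-v$, the Brezis--Lieb lemma for $|u_n|_4^4, |v_n|_4^4$ together with the analogous splitting $\int u_n^2 v_n^2 = \int u^2v^2 + \int w_n^2 z_n^2 + o(1)$ (whose cross terms are killed by $u,v\in L^\infty$ and the subcritical compact embeddings) give
\begin{align*}
\mathcal J_\beta(u_n,v_n) &= \mathcal J_\beta(u,v) + \tfrac12(|\nabla w_n|_2^2+|\nabla z_n|_2^2) - \tfrac14(|w_n|_4^4+2\beta|w_n z_n|_2^2+|z_n|_4^4) + o(1),\\
o(1) &= |\nabla w_n|_2^2+|\nabla z_n|_2^2 - (|w_n|_4^4+2\beta|w_n z_n|_2^2+|z_n|_4^4),
\end{align*}
the latter obtained by testing $\mathcal J_\beta'(u_n,v_n)$ on $(w_n,z_n)$ and using $\mathcal J_\beta'(u,v)=0$. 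Let $a:=\lim|\nabla w_n|_2^2$ and $b:=\lim|\nabla z_n|_2^2$ along a subsequence. Cherrier's inequality (Lemma \ref{Cherrier}), combined with $|w_n|_2,|z_n|_2\to 0$, gives $|w_n|_4^4 \le \tfrac{2}{S^2}a^2+o(1)$, $|z_n|_4^4 \le \tfrac{2}{S^2}b^2+o(1)$, and $|w_n z_n|_2^2 \le \tfrac{2}{S^2}ab+o(1)$ via Hölder; plugging into the second identity yields $a+b \le \tfrac{2}{S^2}(a^2+2\beta ab+b^2)$. A short case analysis (one of $a,b$ zero vs.\ both positive, using $ab\le(a+b)^2/4$ and splitting at $\beta=1$) shows that if $a+b>0$ then $a+b\ge\min\{S^2/2,\ S^2/(1+\beta)\}$, whence
\[
c = \mathcal J_\beta(u,v) + \tfrac14(a+b) \ge \min\!\left\{\tfrac{S^2}{8},\ \tfrac{S^2}{4(1+\beta)}\right\},
\]
contradicting the hypothesis; thus $a=b=0$ and $(u_n,v_n)\to(u,v)$ strongly.

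\textbf{Main obstacle.} The delicate point is the Brezis--Lieb-type splitting of the coupling term $\int u_n^2 v_n^2$, which is not covered by the standard lemma and requires both the strong $L^2$-convergence of $w_n,z_n$ and the $L^\infty$-regularity of the weak limit to make the many cross products vanish; together with the sharp case analysis balancing Cherrier's constant $\sqrt 2/S$ on each component against Hölder for the mixed term, which is precisely what produces the two distinct thresholds $S^2/8$ (one-component blow-up) and $S^2/(4(1+\beta))$ (simultaneous blow-up) appearing in the statement.
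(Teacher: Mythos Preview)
Your proof is correct and follows essentially the same strategy as the paper's: boundedness, then compact embedding for $N\le 3$, then Brezis--Lieb splitting plus Cherrier's inequality for $N=4$.

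There is one genuine technical variation worth noting. In the critical step you test $\mathcal J_\beta'(u_n,v_n)$ only against the full difference $(w_n,z_n)$, obtaining the \emph{summed} identity $a+b=\lim(|w_n|_4^4+2\beta|w_nz_n|_2^2+|z_n|_4^4)$, and then run a case analysis on $\beta\lessgtr 1$ via $ab\le(a+b)^2/4$ to extract $a+b\ge\min\{S^2/2,\,S^2/(1+\beta)\}$. The paper instead tests against $(u_n,0)$ and $(0,v_n)$ separately, which yields the two decoupled identities $|\nabla\theta_n|_2^2=\int(\theta_n^4+\beta\theta_n^2\sigma_n^2)+o(1)$ and the analogous one for $\sigma_n$; dividing each by $\xi_i$ and summing gives $\xi_1+\xi_2\ge S^2/(1+\beta)$ directly when both are positive, with no split on the size of $\beta$. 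Your route is a bit more elementary (one test function) at the cost of the extra algebraic case distinction; the paper's route is slightly slicker but needs the component-wise testing. Both arrive at the same threshold.

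A second minor difference: you explicitly invoke Proposition~\ref{regularity} to get $u,v\in L^\infty$ and use this to kill the cross terms in the splitting of $\int u_n^2v_n^2$. The paper handles the same expansion more tersely (``exploiting the weak convergence''), but the underlying justification is the same, since the weak limit is already known to be a critical point and hence regular. Your presentation here is actually cleaner.
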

\begin{proof}
 The proof that every Palais-Smale sequence $\{(u_n,v_n)\}$ is bounded follows as in \cite[Lemma 3.2]{clapp2022solutions}. Then we can assume that $u_n\wto u$ and $v_n\wto v$ in $H^1(\Omega)$ and, in particular,
 for any $\varphi\in H^1(\Omega)$ we have
\begin{equation}\label{Jb'=0 eq1}
    \begin{aligned}
      \int_{\Omega}\nabla u_n\cdot\nabla \varphi\to \int_{\Omega}\nabla u\cdot\nabla\varphi,\qquad 
      \int_{\Omega}\nabla v_n\cdot\nabla \varphi\to \int_{\Omega}\nabla v\cdot\nabla\varphi
    \end{aligned}
\end{equation}
and
\begin{align*}
\langle\mathcal J'_\beta(u_n,v_n),(\phi,\psi)\rangle=\langle\mathcal J'_\beta(u,v),(\phi,\psi)\rangle+o(1),\quad \forall\ (\phi,\psi)\in H.
\end{align*}
Recalling that $\mathcal J_\beta'(u_n,v_n)=o(1)$ and passing to the limit as $n\to+\infty$, we get $\mathcal J_\beta'(u,v)=0$.
\smallbreak

When $N\le3$, the converge is strong in $L^p(\Omega)$ for $p\in[1,4]$ and
\begin{equation}\label{compactness subcritical}
    o(1)=\langle\mathcal J_\beta'(u_n,v_n)-\mathcal J_\beta'(u,v),(u_n-u,v_n-v)\rangle=\|(u_n-u,v_n-v)\|^2+o(1).
\end{equation}
This implies the strong convergence in $H^1(\Omega)$ and the $(PS)$-condition.

\smallbreak Now, we focus on the critical case $N=4$. We have $u_n\to u, v_n\to v$ strongly  in $L^q(\Omega)$ for $q\in[1,4)$ and weakly for $q=4$. We define $\theta_n:=u_n-u, \sigma_n:=v_n-v$. By the Brezis-Lieb Lemma \cite{brezis1983relation}:
\begin{equation}\label{brezis lieb eq1}
\begin{aligned}
    |u_{n}|_4^4=|u|_4^4+|\theta_{n}|_4^4+o(1),\qquad |v_n|_4^4=|v|_4^4+|\sigma_{n}|_4^4+o(1);
    \end{aligned}
\end{equation}
exploiting the weak convergence $\sigma_n,\theta_n\wto0$ in $H^1(\Omega)$,
\begin{equation}\label{Jb'=0 eq2}
\begin{aligned}
\int_\Omega u_n^2v_n^2&=\int_\Omega (\theta_n+u)^2(\sigma_n+v)^2=\int_\Omega \theta_n^2 \sigma_n^2 +\int_\Omega 2 \theta_n^2 \sigma_n v +\int_\Omega \theta_n^2 v^2\\
&+\int_\Omega 2 \theta_n u \sigma_n^2 +\int_\Omega 4 \theta_n u \sigma_n v +\int_\Omega 2 \theta_n u v^2 +\int_\Omega u^2 \sigma_n^2 +\int_\Omega 2 u^2 \sigma_n v +\int_\Omega u^2 v^2\\
&=\int_\Omega\theta_n^2\sigma_n^2+\int_\Omega u^2v^2+o(1).
\end{aligned}
\end{equation}
Using \eqref{Jb'=0 eq1}, \eqref{brezis lieb eq1} and \eqref{Jb'=0 eq2}, we have
\begin{equation}\label{nehari equation+o(1)}
\begin{split}
|\nabla\theta_n|_{2}^2&-\int_{\Omega}(\theta_n^4+\beta\theta_n^2\sigma_n^2)=|\nabla u_n|_{2}^2-|\nabla u|_{2}^2-\int_\Omega u_n^4+\int_\Omega u^4 \\
&-\beta\int_\Omega u_n^2v_n^2+\beta\int_\Omega u^2v^2+o(1)=o(1),\\
|\nabla\sigma_n|_{2}^2&-\int_\Omega(\sigma_n^4+\beta\theta_n^2\sigma_n^2)=|\nabla v_n|_{2}^2-|\nabla v|_{2}^2-\int_\Omega v_n^4+\int_\Omega v^4\\&-\beta\int_\Omega u_n^2v_n^2+\beta\int_\Omega u^2v^2+o(1)=o(1)
\end{split}
\end{equation}
where the last identity comes from $\mathcal J_\beta'(u,v)=0$ and $\mathcal J_\beta'(u_n,v_n)=o(1)$.
We can derive also that 
\begin{equation}\label{eq. energia}
\begin{split}
c+o(1)=\mathcal J_\beta(u_n,v_n)&=\frac14\int_\Omega(|\nabla u_n|^2+|\nabla v_n|^2)+o(1)\\
&=\frac14B((u,v),(u,v))+\frac14\int_{\Omega}(|\nabla\theta_n|^2+|\nabla\sigma_n|^2)+o(1)\\
&\ge\frac14\int_{\Omega}(|\nabla\theta_n|^2+|\nabla\sigma_n|^2)+o(1).
\end{split}
\end{equation}
Passing to a subsequence, we may assume that the following limits are finite:
\[\xi_1:=\lim_{n\to+\infty}\int_{\Omega}|\nabla\theta_n|^2,\ \ \xi_2:=\lim_{n\to+\infty}\int_{\Omega}|\nabla\sigma_n|^2.\]
If $\xi_1=\xi_2=0$, we obtain the strong convergence of $u_n,v_n$ to $u,v$ in $H^1(\Omega)$.
Suppose now that $\xi_1+\xi_2>0$.

 Cherrier's inequality (Lemma \ref{Cherrier}) implies that, for each $\varepsilon>0$,
\begin{align*}
    \int_\Omega\theta_n^4\le\left(\frac{\sqrt2}{S}+\varepsilon\right)^2|\nabla\theta_n|_2^4+o(1),\quad  \int_\Omega\sigma_n^4\le\left(\frac{\sqrt2}{S}+\varepsilon\right)^2|\nabla\sigma_n|_2^4+o(1).
\end{align*}
Thus, by \eqref{nehari equation+o(1)} and H\"older's inequality,

\begin{align*}
  |\nabla\theta_n|_2^2&=\int_\Omega(\theta_n^4+\beta\theta_n^2\sigma_n^2)+o(1)\le |\theta_n|_4^4+\beta|\theta_n|_4^2\cdot|\sigma_n|_4^2+o(1)\\
  &\le\left(\frac{\sqrt2}{S}+\varepsilon\right)^2\cdot(|\nabla\theta_n|_2^4+\beta|\nabla\theta_n|_2^2\cdot|\nabla\sigma_n|_2^2)+o(1),\\
|\nabla\sigma_n|_2^2&=\int_\Omega(\sigma_n^4+\beta\theta_n^2\sigma_n^2)+o(1)\le(|\sigma_n|_4^4+\beta|\theta_n|_4^2\cdot|\sigma_n|_4^2)+o(1)\\
    &\le\left(\frac{\sqrt2}{S}+\varepsilon\right)^2\cdot(|\nabla\sigma_n|_2^4+\beta|\nabla\theta_n|_2^2\cdot|\nabla\sigma_n|_2^2)+o(1).
\end{align*}
As $n\to+\infty$,
\[
\xi_1\le\left(\frac{\sqrt2}{S}+\varepsilon\right)^2(\xi_1^2+\beta\xi_1\xi_2)\qquad \xi_2\le\left(\frac{\sqrt2}{S}+\varepsilon\right)^2(\xi_2^2+\beta\xi_1\xi_2).
\]
If $\xi_1,\xi_2>0$, we have that $\xi_1+\xi_2\ge\frac{S^2}{1+\beta}$ letting $\varepsilon\to0^+$. This contradicts $c<\frac{S^2}{4(1+\beta)}$. Indeed, dividing by $\xi_1$ the first equation and by $\xi_2$ the second one, we get:
\[
\begin{cases}
1\le\left(\frac{\sqrt2}{S}+\varepsilon\right)^2(\xi_1+\beta \xi_2)\\
1\le\left(\frac{\sqrt2}{S}+\varepsilon\right)^2(\xi_2+\beta \xi_1)
\end{cases}
\implies \xi_1+\xi_2\ge\frac{2}{1+\beta}\left(\frac{\sqrt2}{S}+\varepsilon\right)^{-2}
\]
If $\xi_1=0$, then $\xi_2\ge\frac{S^2}{2}$ and
    \[c=\lim_{n\to+\infty}\mathcal J_\beta(u_n,v_n)\ge\frac{\xi_2}{4}\ge\frac{S^2}{8},\]
    from \eqref{eq. energia}. The hypothesis $c<\frac{S^2}{8}$ implies the contradiction $\frac{S^2}{8}>c\ge\frac{S^2}{8}$.
    
 The case $\xi_2=0$ is similar.
\end{proof}

In the critical case, in order to show the compactness condition of Theorem \ref{PS condition beta>0}, we follow \cite[proof of Theorem 1.2]{clapp2022solutions}, which deals with the Dirichlet case. In the Neumann case,  the estimate in Lemma \ref{Estimates for C} allows us to obtain compactness for  \emph{all} values of $\lambda_1,\lambda_2$, since we concentrate the bubbles at a point on the boundary with positive curvature. In the Dirichlet case, one concentrates the bubbles inside the domain, and the condition for $N=4$ is satisfied only when $-\lambda_1,-\lambda_2$ are not  eigenvalues. Recalling Lemma \ref{linking geom}, we have the following.
\begin{proposition}\label{soglia}
Let $N=4$. There exists $\boldsymbol{e}\in H^+\cap\partial B_1$  such that, by choosing $R=R_{\boldsymbol{e}}$, we have $c_\beta=c_\beta(\boldsymbol e)<\frac{S^2}{4(1+\beta)}$ for every $\beta>0$.
\end{proposition}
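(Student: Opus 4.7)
My plan is to test the minimax with a specific $\boldsymbol{e}=\boldsymbol{e}_\varepsilon$ built from the truncated Aubin--Talenti bubble $w_\varepsilon=\eta U_{\varepsilon,0}$ concentrated at the boundary point $0$ of strictly positive mean curvature, and then exploit the sharp asymptotics of Lemma \ref{Estimates for C}. Since the decomposition $H^1(\Omega)=\tilde{H}_i\oplus H_i^+$ is $B_i$-orthogonal (it is induced by the spectral decomposition of $-\Delta+\lambda_i$ with Neumann boundary conditions), for each $i=1,2$ I would split $w_\varepsilon=\tilde{w}_{i,\varepsilon}+w_{i,\varepsilon}^+$ with $w_{i,\varepsilon}^+\in H_i^+$. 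As $\tilde{H}_i$ is finite-dimensional and spanned by $L^\infty$-bounded eigenfunctions, the estimate $|w_\varepsilon|_1=O(\sqrt{\varepsilon})$ in Lemma \ref{Estimates for C} gives $\|\tilde{w}_{i,\varepsilon}\|=O(\sqrt{\varepsilon})$, so $w_{i,\varepsilon}^+$ inherits all the key asymptotics of $w_\varepsilon$ up to corrections of order $o(\sqrt{\varepsilon})$. I then set $\boldsymbol{f}_\varepsilon:=(w_{1,\varepsilon}^+,w_{2,\varepsilon}^+)\in H^+$ and $\boldsymbol{e}_\varepsilon:=\boldsymbol{f}_\varepsilon/\|\boldsymbol{f}_\varepsilon\|\in H^+\cap\partial B_1$.

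Using $B$-orthogonality, for any $\boldsymbol{z}=\tilde{\boldsymbol{w}}+tR\boldsymbol{e}_\varepsilon\in Q_{\boldsymbol{e}_\varepsilon,R}$ one writes
\[
\mathcal{J}_\beta(\boldsymbol{z})=\tfrac{1}{2}B(\tilde{\boldsymbol{w}},\tilde{\boldsymbol{w}})+\tfrac{(tR)^2}{2}B(\boldsymbol{e}_\varepsilon,\boldsymbol{e}_\varepsilon)-\tfrac{1}{4}\int_\Omega\bigl(z_1^4+2\beta z_1^2z_2^2+z_2^4\bigr).
\]
The first term is non-positive since $B$ is negative semi-definite on $\tilde{H}$, while expanding the quartic produces (beside the dominant $-(tR)^4/4\cdot\int\bigl(e_{1,\varepsilon}^4+2\beta e_{1,\varepsilon}^2e_{2,\varepsilon}^2+e_{2,\varepsilon}^4\bigr)$) cross-terms coupling $\tilde{\boldsymbol{w}}$ and $\boldsymbol{e}_\varepsilon$. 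Bounding these via H\"older's inequality together with $|\nabla w_\varepsilon|_1, |w_\varepsilon|_1, |w_\varepsilon|_3^3=O(\sqrt{\varepsilon})$, $|w_\varepsilon|_2^2=O(\varepsilon|\log\varepsilon|)$ from Lemma \ref{Estimates for C}, and exploiting the equivalence of all norms on the finite-dimensional space $\tilde{H}$, I would obtain
\[
\sup_{\boldsymbol{z}\in Q_{\boldsymbol{e}_\varepsilon,R}}\mathcal{J}_\beta(\boldsymbol{z})\le\frac{1}{4}\cdot\frac{B(\boldsymbol{f}_\varepsilon,\boldsymbol{f}_\varepsilon)^2}{\int_\Omega\bigl((w_{1,\varepsilon}^+)^4+2\beta(w_{1,\varepsilon}^+)^2(w_{2,\varepsilon}^+)^2+(w_{2,\varepsilon}^+)^4\bigr)}+o(\sqrt{\varepsilon}),
\]
where the right-hand side is the explicit maximum of the scalar map $s\mapsto\mathcal{J}_\beta(s\boldsymbol{f}_\varepsilon)$.

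Finally, plugging in $|\nabla w_\varepsilon|_2^2=\tfrac{S^2}{2}-C_1\sqrt{\varepsilon}+O(\varepsilon|\log\varepsilon|)$, $|w_\varepsilon|_4^4=\tfrac{S^2}{2}-C_2\sqrt{\varepsilon}+O(\varepsilon)$, $|w_\varepsilon|_2^2=O(\varepsilon|\log\varepsilon|)$, and the $o(\sqrt{\varepsilon})$ smallness of the projection corrections, a direct computation yields
\[
\sup_{\boldsymbol{z}\in Q_{\boldsymbol{e}_\varepsilon,R}}\mathcal{J}_\beta(\boldsymbol{z})\le\frac{S^2}{4(1+\beta)}-\frac{2C_1-C_2}{2(1+\beta)}\sqrt{\varepsilon}+o(\sqrt{\varepsilon}).
\]
Since $C_1>C_2>0$ one has $2C_1-C_2>0$, so the right-hand side is strictly below $\tfrac{S^2}{4(1+\beta)}$ for $\varepsilon$ sufficiently small, proving the claim. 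The hard part is the control of the quartic cross-terms between the (possibly sign-changing) components of $\tilde{\boldsymbol{w}}\in\tilde{H}$ and the concentrating bubble $\boldsymbol{e}_\varepsilon$: the crucial feature is that the $L^q$-norms of $w_\varepsilon$ for $q<4$ vanish at the scale $\sqrt{\varepsilon}$, which is precisely what makes these cross-terms negligible with respect to the strict gap $\tfrac{2C_1-C_2}{2(1+\beta)}\sqrt{\varepsilon}$ obtained from Lemma \ref{Estimates for C}. Unlike the Dirichlet setting, where this strategy requires $-\lambda_1,-\lambda_2$ to avoid the spectrum, here the boundary concentration ensures the estimate for every $\lambda_1,\lambda_2\in\mathbb{R}$.
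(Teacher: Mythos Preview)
Your approach is essentially the same as the paper's: both test the linking class with the truncated Aubin--Talenti bubble $w_\varepsilon$ concentrated at a boundary point of positive mean curvature, estimate $\sup_Q\mathcal J_\beta$, and conclude via the strict inequality $C_1>C_2$ of Lemma~\ref{Estimates for C}. Your variation of first projecting $w_\varepsilon$ onto $H_i^+$ is harmless (and in fact cleaner, since the paper takes $\boldsymbol e_\varepsilon=\boldsymbol w_\varepsilon/\|\boldsymbol w_\varepsilon\|$, which need not lie in $H^+$ when $\lambda_i\le 0$; the two choices give the same supremum because $\tilde H+\R^+\boldsymbol w_\varepsilon=\tilde H+\R^+\boldsymbol w_\varepsilon^+$).

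There is, however, one step you gloss over. From H\"older and the $L^q$-asymptotics of $w_\varepsilon$ alone, the cross-terms between $\tilde{\boldsymbol w}$ and $\boldsymbol e_\varepsilon$ are only of order $O(\sqrt\varepsilon)\,\|\tilde{\boldsymbol w}\|$ (for instance $t^3\int_\Omega\tilde w_i\,e_{i,\varepsilon}^3=O(\sqrt\varepsilon)\|\tilde w_i\|$), \emph{not} $o(\sqrt\varepsilon)$, since a priori $\|\tilde{\boldsymbol w}\|$ may be as large as $R$. To reduce them to $o(\sqrt\varepsilon)$ you must keep the pure-$\tilde{\boldsymbol w}$ quartic term, which you do not mention: on the finite-dimensional space $\tilde H$ one has $-\tfrac14\int_\Omega(\tilde w_1^4+2\beta\tilde w_1^2\tilde w_2^2+\tilde w_2^4)\le -k_2(\|\tilde w_1\|^4+\|\tilde w_2\|^4)$, and then the elementary optimization
\[
O(\sqrt\varepsilon)\,x - k_2 x^4 \;\le\; C\varepsilon^{2/3}=o(\sqrt\varepsilon),\qquad
O(\sqrt\varepsilon)\,x^3 - k_2 x^4 \;\le\; C\varepsilon^{2}=o(\sqrt\varepsilon)
\]
absorbs all such terms. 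This is exactly the mechanism the paper invokes (via the estimates $(4.12)$--$(4.13)$ of \cite{clapp2022solutions}). Once you add this step, your argument is complete and your final expansion $\frac{S^2}{4(1+\beta)}-\frac{2C_1-C_2}{2(1+\beta)}\sqrt\varepsilon+o(\sqrt\varepsilon)$ is correct.
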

\begin{proof}
We consider $w_\varepsilon=U_{\varepsilon,0}(|x|)\eta(x)$, as in Lemma \ref{Estimates for C} and $\boldsymbol z=\tilde {\boldsymbol w}+t{\boldsymbol w}_\varepsilon$, with $\tilde {\boldsymbol w}=(\tilde w_1,\tilde w_2)\in\tilde H$ and  ${\boldsymbol w}_\varepsilon=(w_\varepsilon,w_\varepsilon)$. We estimate $\max_{\substack{\tilde {\boldsymbol w}\in\tilde H, t>0}}\mathcal J_\beta(\boldsymbol z).$ 
 Since $\beta>0$, using Lemma \ref{Estimates for C} and equations $(4.8)-(4.9)$ in \cite[p. 14]{clapp2022solutions}, it follows that
 \begin{align*}
   \mathcal J_\beta(\tilde w_1+tw_\varepsilon,\tilde w_2+tw_\varepsilon)\le c_1\left[t^2+t^3(\|\tilde w_1\|+\|\tilde w_2\|)\sqrt\varepsilon+t(\|\tilde w_1\|+\|\tilde w_2\|)\right]-c_2(t^4+\|\tilde w_1\|^4+\|\tilde w_2\|^4),
 \end{align*}
 for some $c_1,c_2>0$.
 Hence, there exists $R>0$ large such that $\mathcal J_\beta(\tilde w_1+tw_\varepsilon,\tilde w_2+tw_\varepsilon)\le0$ if either $t\ge R$ or $\|\tilde{\boldsymbol w}\|\ge R$. Thus, we can assume that $\|\tilde{\boldsymbol w}\|\le R$ and $t\le R$.

Notice that, since the equations $(4.8)-(4.10)$ in \cite[p. 14]{clapp2022solutions} hold also in our case, we can deduce the existence of $k_1,k_2>0$ such that
\begin{align*}
      \mathcal J_\beta(\tilde w_1+tw_\varepsilon,\tilde w_2+tw_\varepsilon)&\le\frac{t^2}{2}B_1(w_\varepsilon,w_\varepsilon)+\frac{t^2}{2}B_2(w_\varepsilon,w_\varepsilon)-\frac{1+\beta}{2}t^4\int_\Omega w_\varepsilon^4\\&-t^3\int_{\Omega} (\tilde w_1+\tilde w_2)w_\varepsilon^3 -\beta t^3\int_{\Omega} (\tilde w_1+\tilde w_2)w_\varepsilon^3 -2\beta t^2\int_{\Omega} \tilde w_1\tilde w_2w_\varepsilon^2\\
       &\quad -k_2(\|\tilde w_1\|^4+\|\tilde w_2\|^4) -\frac{\beta k_1}{2} \|\tilde w_1\|^2 \|\tilde w_2\|^2+tB_1(\tilde w_1,w_\varepsilon)+tB_2(\tilde w_2,w_\varepsilon).
      \end{align*}
   
     By Lemma \ref{Estimates for C}, together with the fact that \( \tilde H \) is finite-dimensional and $\|\tilde{\boldsymbol w}\|,t\le C$, we obtain

      \begin{align*}
      \mathcal J_\beta(\tilde w_1+tw_\varepsilon,\tilde w_2+tw_\varepsilon)&\le t^2 \left(\frac{S^2}{2}-C_1\sqrt\varepsilon+o(\sqrt\varepsilon)\right) -\frac{(1+\beta)t^4}{2} \left(\frac{S^2}{2}-C_2\sqrt\varepsilon+o(\sqrt\varepsilon)\right)\\
      &\quad +O(\sqrt\varepsilon)(\|\tilde w_1\|+\|\tilde w_2\|) + O(\varepsilon|\log\varepsilon|)\|\tilde w_1\|\|\tilde w_2\|\\
      &\quad -k_2(\|\tilde w_1\|^4+\|\tilde w_2\|^4) -\frac{\beta k_1}{2}\|\tilde w_1\|^2 \|\tilde w_2\|^2+o(\sqrt\varepsilon). 
\end{align*}

   From $(4.12)-(4.13)$ in \cite[p. 15]{clapp2022solutions}, we infer that

   \[O(\sqrt\varepsilon)(\|\tilde w_1\|+\|\tilde w_2\|)-k_2(\|\tilde w_1\|^4+\|\tilde w_2\|^4)\le C\varepsilon^{2/3}=o(\sqrt\varepsilon),\]
   \[O(\varepsilon|\log\varepsilon|)\|\tilde w_1\|\cdot\|\tilde w_2\|-\frac{\beta k_1}{2}\|\tilde w_1\|^2\cdot\|\tilde w_2\|^2\le C\varepsilon^2\log^2\varepsilon=o(\sqrt\varepsilon).\]
   Thus,
   \begin{align*}
       \mathcal J_\beta(\tilde w_1+tw_\varepsilon,\tilde w_2+tw_\varepsilon)\le t^2 \left(\frac{S^2}{2}-C_1\sqrt\varepsilon+o(\sqrt\varepsilon)\right)-\frac{(1+\beta)t^4}{2}\left(\frac{S^2}{2}-C_2\sqrt\varepsilon+o(\sqrt\varepsilon)\right)+o(\sqrt\varepsilon).
   \end{align*}
   
Now, taking the maximum over all $t>0$ and $\tilde{\boldsymbol w}\in\tilde H$, we obtain:
\[
     \max_{\substack{\tilde{\boldsymbol w}\in\tilde H\\t>0}}\mathcal J_\beta(\tilde w_1+tw_\varepsilon,\tilde w_2+tw_\varepsilon)\le\frac{\left(\frac{S^2}{2}-C_1\sqrt\varepsilon+o(\sqrt\varepsilon)\right)^2}{2(1+\beta)\left(\frac{S^2}{2}-C_2\sqrt\varepsilon+o(\sqrt\varepsilon)\right)}+o(\sqrt\varepsilon)<\frac{S^2}{4(1+\beta)},
     \]
     for $\varepsilon>0$ small enough (recall Lemma \ref{Estimates for C}, in particular the fact that $C_1>C_2$).
  Taking    $\boldsymbol{e}_\varepsilon:=\frac{\boldsymbol{w}_\varepsilon}{\|\boldsymbol{w}_\varepsilon\|}$ 
, we obtain $c_\beta(\boldsymbol{e}_\varepsilon)<\frac
     {S^2}{4(1+\beta)}$. 
\end{proof}
\begin{remark}
     We can also write the threshold value of Proposition \ref{soglia} as half of the compactness threshold value in the Dirichlet case \cite[eq. $(4.6)$]{clapp2022solutions}: \[\frac{S^2}{4(1+\beta)}=\frac{1}{2N}S_{\infty,\beta}^{N/2},\]
     where $N=4$ and $S_{\infty,\beta}$ is defined in  \eqref{Sinfbeta}.
\end{remark}

\begin{lemma}\label{cb critico}
One has that $c_\beta$, introduced in Proposition \ref{soglia}, is a critical level if $N \le 3$, or if $N=4$ and $\beta >1$. 
\end{lemma}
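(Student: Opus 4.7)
The plan is to apply the linking theorem (Theorem \ref{linking theorem}) with $J=\mathcal J_\beta$, $V=\tilde H$, $W=H^+$, and the set $Q=Q_{\boldsymbol e,R}$ with $\boldsymbol e \in H^+ \cap \partial B_1$ and $R \ge R_{\boldsymbol e}$. The two geometric hypotheses have already been verified in Lemma \ref{linking geom}, independently of the dimension, so the only thing left to check is that $\mathcal J_\beta$ satisfies the $(PS)_{c_\beta}$ condition. Once this is done, Theorem \ref{linking theorem} produces a critical point at level $c_\beta$, which is precisely the claim of the lemma.

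I would split the argument according to the dimension. When $N\le 3$, the situation is immediate: Theorem \ref{PS condition beta>0} guarantees that $\mathcal J_\beta$ satisfies $(PS)_c$ for \emph{every} $c\in\R$, so $c_\beta$ is automatically a critical value for any choice of $\boldsymbol e$ and $R$.

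The more delicate case is $N=4$. Here Theorem \ref{PS condition beta>0} only gives compactness below the threshold $\min\{S^2/(4(1+\beta)),\, S^2/8\}$. The key observation is that, when $\beta>1$, one has $4(1+\beta)>8$, hence
\[
\min\left\{\frac{S^2}{4(1+\beta)},\frac{S^2}{8}\right\}=\frac{S^2}{4(1+\beta)}.
\]
Choosing $\boldsymbol e = \boldsymbol e_\varepsilon$ and $R=R_{\boldsymbol e_\varepsilon}$ as in Proposition \ref{soglia} (with $\varepsilon>0$ small enough), Proposition \ref{soglia} gives exactly $c_\beta<\frac{S^2}{4(1+\beta)}$, which is the required strict inequality. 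Therefore $(PS)_{c_\beta}$ holds and Theorem \ref{linking theorem} applies.

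The main conceptual obstacle is the restriction $\beta>1$ in dimension four: for $\beta\le 1$ the relevant compactness threshold becomes $S^2/8$, and Proposition \ref{soglia} only controls $c_\beta$ below $S^2/(4(1+\beta))\ge S^2/8$, so our estimate on the linking level is not sharp enough to guarantee compactness. This is precisely why the statement of the lemma rules out this range; no extra argument is needed beyond matching Proposition \ref{soglia} with the compactness threshold of Theorem \ref{PS condition beta>0}. The routine verification consists only in comparing the two thresholds for $\beta>1$.
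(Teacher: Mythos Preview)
Your proposal is correct and follows exactly the same approach as the paper: combine the geometric hypotheses from Lemma~\ref{linking geom}, the Palais--Smale condition from Theorem~\ref{PS condition beta>0}, and (in the critical case) the energy estimate $c_\beta<S^2/(4(1+\beta))$ from Proposition~\ref{soglia} to apply the linking theorem. Your write-up is in fact more detailed than the paper's, which simply cites these three ingredients in one sentence; your explicit observation that for $\beta>1$ the compactness threshold reduces to $S^2/(4(1+\beta))$ is the only computation needed and you handle it correctly.
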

\begin{proof}
According to Lemma~\ref{linking geom}, Theorem~\ref{PS condition beta>0}, and Proposition~\ref{soglia}, we can apply Theorem~\ref{linking theorem}. Therefore, there exists a nontrivial pair \( (u,v) \in H \setminus \{(0,0)\} \) such that \( \mathcal{J}_\beta(u,v) = c_\beta \) and \( \mathcal{J}_\beta'(u,v) = 0 \), i.e., \( c_\beta \) is a critical level for \( \mathcal{J}_\beta \).
\end{proof}

 \begin{remark}\label{Remark Mountain Pass}

    In the particular case $\lambda_1,\lambda_2>0$, one can prove by similar arguments that the Mountain Pass level:   
\[
\widetilde c_\beta:=\inf_{\boldsymbol u\in H\setminus\{\boldsymbol 0\}}\max_{t>0}\mathcal J_\beta(t\boldsymbol u)
\]
is attained for $\beta>1$. Moreover, adapting \cite[Theorem 4.2]{willem2012minimax} to the Neumann case, it follows that $\widetilde c_\beta=g_\beta$.
\end{remark}

\begin{proof}[Proof of Theorem \ref{thm ground state}]
Let $u_i\in H^1(\Omega)$ be a solution of \eqref{scalar eq} for $i=1,2$. Then
    \[(u_1,0),(0,u_2)\in\{(u,v)\in H\setminus\{(0,0)\}\ :\ \mathcal J_\beta'(u,v)=0\}\ne\emptyset,\]
   and we can take a minimizing sequence $\{(u_n,v_n)\}$  for $g_\beta$. In particular, $\{(u_n,v_n)\}$ is a Palais-Smale sequence at level $g_\beta$. If $\beta>1$, $g_\beta<\frac{S^2}{4(1+\beta)}$ according to Lemma \ref{cb critico} and Proposition \ref{soglia}, while if $\beta\in(0,1)$, $g_\beta\le\min\{L_{\lambda_1},L_{\lambda_2}\}<\frac{S^2}{8}$ by \eqref{threshold scalar equation}.
    By  Theorem~\ref{PS condition beta>0},  we can deduce that there exists $(u,v)\in H$ such that $u_n\to u,v_n\to v$ strongly in $H^1(\Omega)$, up to a subsequence. Since $\beta>0$, we can follow \cite[Lemma 3.3]{clapp2022solutions} to get that $(u,v)\not\equiv(0,0)$ and the proof of Theorem \ref{thm ground state} is complete.   
\end{proof}

We conclude the section proving Proposition \ref{sharp beta*}:
\begin{proof}[Proof of Proposition \ref{sharp beta*}]
We follow \cite{mandel2015minimal}, adapting it to the bounded domain case. Let
\[\Phi(\boldsymbol u):=\frac{(4L)^{-1}B(\boldsymbol u,\boldsymbol u)^2-|u_1|_4^4-|u_2|_4^4}{2|u_1u_2|_2^2},\quad \boldsymbol u=(u_1,u_2)\in H,\ u_1\cdot u_2\not\equiv 0,
 \]
 with $L=\min\{L_{\lambda_1},L_{\lambda_2}\}$. 

 By \eqref{caratterizzazione les quoziente}, we have that
 \[\int_\Omega|\nabla u|^2+\lambda_i\int_\Omega u^2\ge 2\sqrt{L_{\lambda_i}}\left(\int_\Omega u^4\right)^{\frac12}, \qquad \text{for every $u\in H^1(\Omega)$, $i=1,2$.}\]
Now, by Hold\"er inequality, for $r=\frac{|u_1|_4}{|u_2|_4}$,  we have 
\begin{align*}
    \Phi(\boldsymbol u)&\ge\frac{(4L)^{-1}B(\boldsymbol u,\boldsymbol u)^2-|u_1|_4^4-|u_2|_4^4}{2|u_1|_4^2|u_2|_4^2}=\frac{\frac{(4L)^{-1}}{|u_2|_4^4}B(\boldsymbol u,\boldsymbol u)^2-r^4-1}{2r^2}\\
    &\ge\frac{\frac{(4L)^{-1}}{|u_2|_4^4}(2\sqrt{L_{\lambda_1}}|u_1|_4^2+2\sqrt{L_{\lambda_2}}|u_2|_4^2)^2-r^4-1}{2r^2}\\
    &=\frac{L^{-1}L_{\lambda_1}r^4+L^{-1}L_{\lambda_2}+2L^{-1}\sqrt{L_{\lambda_1} L_{\lambda_2}}r^2-r^4-1}{2r^2}\\
    &\ge\frac{r^4+1+2L^{-\frac12}L^{-\frac12}\sqrt{L_{\lambda_1}L_{\lambda_2}}r^2-r^4-1}{2r^2}\ge1.
\end{align*}
 Hence, $\beta_*\ge1$.\\ 
 $(i)$ Let us assume $\beta<\beta_*$ and by contradiction let $\boldsymbol u^*$ be a fully non-trivial ground state solution. Since $\beta_*\le \Phi(\boldsymbol u^*)$, $\beta<\Phi(\boldsymbol{u}^*)$ and we have:
\[\mathcal J_\beta(\boldsymbol u^*)=\frac14\cdot\frac{B(\boldsymbol u^*,\boldsymbol u^*)^2}{|u_1^*|_4^4+2\beta|u_1^*u_2^*|_2^2+|u_2^*|_4^4}>\frac14\cdot\frac{B(\boldsymbol u^*,\boldsymbol u^*)^2}{|u_1^*|_4^4+2\Phi(\boldsymbol u^*)|u_1^*u_2^*|_2^2+|u_2^*|_4^4}=\min\{L_{\lambda_1},L_{\lambda_2}\}.\]
This is a contradiction.\\
$(ii)$ On the other hand, if $\beta>\beta_* \ge 1$, there exists a sequence \[\{\boldsymbol u_n\}\subset\{\boldsymbol u\in H\ :\ u_1u_2\not\equiv0\}\] such that $\Phi(\boldsymbol u_n)\le\beta-\frac1n<\beta$, for every $n\ge1$. Let $t>0$ be the maximizer of the function $s\mapsto\mathcal J_\beta(s\boldsymbol u_n)$. Then, $t\boldsymbol u_n$ satisfies:
\[B(t\boldsymbol u_n,t\boldsymbol u_n)=\int_\Omega|tu_{n,1}|^4+2\beta\int_\Omega (tu_{n,1})^2(tu_{n,2})^2+\int_\Omega(tu_{n,2})^4.\]
Thus, $\Phi(t\boldsymbol u_n)=\Phi(\boldsymbol u_n)<\beta$ and 
\[4\mathcal J_\beta(t\boldsymbol u_n)=\frac{B(\boldsymbol u_n,\boldsymbol u_n)^2}{|u_{n,1}|_4^4+2\beta|u_{n,1}u_{n,2}|_2^2+|u_{n,2}|_4^4}<4\min\{L_{\lambda_1},L_{\lambda_2}\},\]
by the same computations of the previous step. 
Hence, the Mountain Pass level $\widetilde c_\beta$ defined in Remark \ref{Remark Mountain Pass} satisfies $\widetilde c_\beta<\min\{L_{\lambda_1},L_{\lambda_2}\}$ and any ground-state solution is a least energy solution.
\end{proof}

\subsection{Strongly cooperative case}
 We prove Theorem \ref{Theorem 1.1}-$(ii)$.

\begin{proof}[Proof of Theorem \ref{Theorem 1.1}-$(ii)$]

Let $(u,v)$ be a ground state solution of \eqref{Pb}, which exists by Theorem \ref{thm ground state}.

\textbf{Step 1:} A ground state solution is a least energy solution for $\beta>1$ large enough.

If $N=4$, $g_\beta\le c_\beta<\frac{S^2}{4(1+\beta)}$ by Proposition \ref{soglia}, so $g_\beta\to0$ as $\beta\to+\infty$. Hence, $g_\beta<\min\{L_{\lambda_1},L_{\lambda_2}\}$ for $\beta>1$ large enough (where $L_{\lambda_i}$ is the least energy level of the scalar equation $-\Delta u+\lambda_iu=u^3$ with $u\in H^1(\Omega)$ and $i=1,2$), therefore $u,v\not\equiv0$. 

Now, we focus on the subcritical case $N\le3$. Let $j_1,j_2\ge1$ be such that $\tilde H_i=\text{span}\{\varphi_1,\dots,\varphi_{j_i}\}$, and let $\boldsymbol e=(e_1,e_2)\in H^+$ with $e_i=s_i\varphi_{j_i+1}$, $s_i\in\R$ such that $\|\boldsymbol e\|=1$ and $R=R_e$  be as in Lemma \ref{linking geom}.
We know that \[c_\beta(\boldsymbol e)\le\sup_{\overline{Q}}\mathcal J_\beta=\sup_{\substack{\tilde{\boldsymbol w}\in\tilde H\cap\overline B_R\\ t\in[0,1]}}\mathcal J_\beta(\tilde{\boldsymbol w}+tR\boldsymbol e),\]
with maximum achieved at some $\tilde{\boldsymbol w}^\beta+t_\beta R \boldsymbol e\in Q$. Since $\{t_\beta\}$ and $\{\tilde{\boldsymbol w}^\beta\}$ are uniformly bounded and $\tilde H$ is finite dimensional, we can assume that $\tilde{\boldsymbol w}^\beta\to\tilde{\boldsymbol w}^\infty,t_\beta\to t_\infty$. If $t_\infty=0$, we have
\[
\mathcal J_\beta(\tilde{\boldsymbol w}^\beta+t_\beta R \boldsymbol e)\le\frac{t_\beta^2R^2}{2}B(\boldsymbol e,\boldsymbol e)\to0.
\]
Thus, $c_\beta\to0$ and we conclude as in the critical case. Let us now consider by contradiction the case $t_\infty>0$, namely we assume that there exists $\delta>0$ such that $t_\beta\ge\delta$ for $\beta$ large enough. We write the maximizer as follows:
\[
\tilde{\boldsymbol w}^\beta+t_\beta R \boldsymbol e=t_\beta\left(\frac{\tilde{\boldsymbol w}^\beta}{t_\beta}+R\boldsymbol e\right)=t_\beta(\tilde{\boldsymbol z}^\beta+R\boldsymbol e)
.\]
Since $\langle\mathcal J_\beta'(t_\beta(\tilde{\boldsymbol z}^\beta+R\boldsymbol e)),t_\beta(\tilde{\boldsymbol z}^\beta+R\boldsymbol e)\rangle=0$,

 \begin{equation}\label{bound t beta}
 t_\beta^2=\frac{B_1(Re_1+\tilde{z}_1^\beta,Re_1+\tilde{z}_1^\beta)+B_2(Re_2+\tilde{z}_2^\beta,Re_2+\tilde{z}_2^\beta)}{\displaystyle{\int_\Omega\left[(Re_1+\tilde{z}_1^\beta)^4+2\beta (Re_1+\tilde{z}_1^\beta)^2(Re_2+\tilde{z}_2^\beta)^2+(Re_2+\tilde{z}_2^\beta)^4\right]}} \le\frac{B_1(Re_1,Re_1)+B_2(Re_2,Re_2)}{\displaystyle{2\beta\int_\Omega(Re_1+\tilde{z}_1^\beta)^2(Re_2+\tilde{z}_2^\beta)^2}}.
 \end{equation}
Assume that
\begin{equation}\label{limit int}
\lim_{\beta\to+\infty}\int_\Omega (Re_1+\tilde{z}_1^\beta)^2(Re_2+\tilde{z}_2^\beta)^2=0.\end{equation}  Since $\{\tilde{z}_1^\beta\},\{\tilde{z}_2^\beta\}$ are uniformly bounded in $\beta$ (as $t_\beta\geq \delta$), we can assume that there exist $\tilde{z}_1^\infty,\tilde{z}_2^\infty$ such that $\tilde{z}_1^\beta\to \tilde{z}_1^\infty$ and $\tilde{z}_2^\beta\to \tilde{z}_2^\infty$ strongly (recall again that $\tilde H_i$ is finite dimensional). Now,  $Re_i+\tilde z_i^\infty\equiv0$ on a open set $\tilde\Omega\subset\Omega$ is not possible, by \cite[Lemma 3.3]{SzulkinWethWillem}. Then  $Re_i+\tilde z_i^\infty\equiv0$ in $\Omega$, but this implies that $Re_i\equiv\tilde z_i^\infty\equiv0$  since $\tilde H_i\cap H^+_i=\{0\}$ for $i=1,2$, a contradiction.  Thus \eqref{limit int} is not true and, by \eqref{bound t beta}, $t_\beta \to 0$ as $\beta \to +\infty$.

This step proves Theorem \ref{Theorem 1.1}-$(ii)$ when $(\lambda_1,\lambda_2)\notin(0,+\infty)^2$.\\
\textbf{Step 2:}
Every ground state solution  is a non-constant least energy solution if $\lambda_1,\lambda_2>0$, $\beta>\max\left\{\frac{\lambda_1}{\lambda_2},\frac{\lambda_2}{\lambda_1}\right\}$ and \eqref{eq least energy non constant} holds. \\ 
Let $(c_1, c_2)$ be a constant ground state solution. 
By \eqref{costanti ammissibili},
\[\mathcal J_\beta(c_1,c_2)=\frac{\lambda_1^2-2\beta\lambda_1\lambda_2+\lambda_2^2}{4(1-\beta^2)}|\Omega|.\]
By Sobolev's embedding and for every $\lambda>0$, and $u\in H^1(\Omega)$ a ground state solution for $-\Delta u+\lambda u=u^3$ in $H^1(\Omega)$, we obtain 
\begin{align*}
    \int_\Omega u^4&\le C_S^{-2}\left(\int_\Omega |\nabla u|^2+\int_\Omega u^2\right)^2\\
    &\le C_S^{-2}\left(\max\left\{1,\frac{1}{\lambda}\right\}\right)^2\left(\int_\Omega |\nabla u|^2+\lambda\int_\Omega u^2\right)^2\\
    &=C_S^{-2}\left(\max\left\{1,\frac{1}{\lambda}\right\}\right)^2\left(\int_\Omega |\nabla u|^2+\lambda\int_\Omega u^2\right)^2\left(\int_\Omega u^4\right)^2.
\end{align*}
For $\lambda=\lambda_i$, we get $L_{\lambda_i}\ge\frac{C_S^2\min\{1,\lambda_1^2,\lambda_2^2\}}{4}$, $i=1, 2$, and $L:=\min\{L_{\lambda_1},L_{\lambda_2}\}\ge\frac{C_S^2\min\{1,\lambda_1^2,\lambda_2^2\}}{4}$.
Now, let \[\varphi_\varepsilon(x)=\begin{cases}
    \varepsilon^{-\frac{N}{2}}(1-\varepsilon^{-\frac12}|x|)&\text{if $|x|\le\sqrt\varepsilon$}\\
    0&\text{if $|x|\ge\sqrt\varepsilon$}.
\end{cases}
\]
By \cite[eq. $(2.11)-(2.12)$]{lin1988large} we have that, for each $q\ge1$, there exist explicit constants $K_q,K>0$, defined in \eqref{const thm 1.1}, such that
\begin{equation}\label{stime phi lin-ni-takagi}
\int_\Omega|\varphi_\varepsilon|^q=K_q\varepsilon^{(1-q)\frac N2},\quad \int_\Omega|\nabla\varphi_{\varepsilon}|^2=K\varepsilon^{-1-\frac N2}.
\end{equation}
We test the Mountain Pass level defined in Remark \ref{Remark Mountain Pass} with $(\varphi_\varepsilon,\varphi_\varepsilon)$:
\begin{align*}
    g_\beta= \widetilde c_\beta&\le\max_{t>0}\mathcal J_\beta(t\varphi_\varepsilon,t\varphi_\varepsilon)=\frac{1}{8(1+\beta)}\frac{\left(\|\varphi_\varepsilon\|_{\lambda_1}^2+\|\varphi_\varepsilon\|_{\lambda_2}^2 \right)^2}{|\varphi_\varepsilon|_4^4}\\
    &\le\frac{1}{8(1+\beta)}\frac{(2K\varepsilon^{-1-\frac N2}+\lambda_1K_2\varepsilon^{-\frac N2}+\lambda_2K_2\varepsilon^{-\frac N2})^2}{K_4\varepsilon^{-\frac{3N}{2}}},
\end{align*}
where we are using \eqref{stime phi lin-ni-takagi}. Choosing $\varepsilon=(\lambda_1+\lambda_2)^{-1}$, we get: 
\[g_\beta\le\frac{(2K+K_2)^2}{8K_4(1+\beta)}(\lambda_1+\lambda_2)^{\frac{4-N}{2}}.\]
 If \eqref{eq least energy non constant} holds, then
 \[g_\beta<\min\{L,\mathcal J_\beta(c_1,c_2)\},\]
and the proof is complete.
\end{proof}

\begin{remark}\label{remark constants iii}
We note that \eqref{eq least energy non constant} holds if  $\lambda_1,\lambda_2>0$ satisfy
\[
\frac{(\lambda_1+\lambda_2)^{\frac{4-N}{2}}}{\lambda_1\lambda_2}<\frac{2K_4}{(2K+K_2)^2}|\Omega|,
\]
and $\beta>0$ is large enough. In turn, the inequality above holds if:
\begin{itemize}
\item either $\lambda_1$ or $\lambda_2$ is large enough when $N=3,4$;
\item both $\lambda_1$ and $\lambda_2$ are large enough when $N=1,2$.
\end{itemize}
\end{remark}

\noindent
\begin{remark}\label{rem:lambda_1=lambda_2}
In the particular case $\lambda_1=\lambda_2$, one can simplify the statement of Theorem \ref{Theorem 1.1}-$(ii)$. Indeed, in this situation, 
\[g_\beta\le\mathcal J_\beta\left(\frac{\omega}{\sqrt{1+\beta}},\frac{\omega}{\sqrt{1+\beta}}\right)=\frac{2L_{\lambda}}{1+\beta},\]
where $\omega\in H^1(\Omega)$ is the least energy solution of \eqref{scalar eq}. Then, for $\beta>1$, we have $g_\beta< L_{\lambda}$ and any ground state solution is not semi-trivial in this case. Thus, reasoning as in the proof of Theorem \ref{Theorem 1.1}-$(ii)$, there exists a least energy solution which is non-constant if $\beta>1$ and
\begin{equation*}\label{eq non costanti duale}
    \lambda^{\frac N2}>\frac{(2K+K_2)^2}{4K_4|\Omega|}.
\end{equation*}
\end{remark}

\subsection{Weakly cooperative case}\label{weakly cooperative case}

We now give the proof of Theorem \ref{Theorem 1.1}-$(i)$ and  we assume $\lambda_1,\lambda_2\in\R$ and $\beta\in(0,1)$.
We define the Nehari-type set:
\[\mathcal N_\beta:=\biggl\{(u,v)\in H\setminus \tilde H\ :\ u,v\not\equiv0,\  \langle\mathcal J_\beta'(u,v),(u,0)\rangle=0,\ \langle\mathcal J_\beta'(u,v),(0,v)\rangle=0,\ \langle\mathcal J_\beta'(u,v),\tilde{\boldsymbol w}\rangle =0\ \ \forall\ \tilde {\boldsymbol w}\in\tilde H \biggr\},\]
and 
\[
m_\beta:=\inf_{\mathcal N_\beta}\mathcal J_\beta.
\]  One can show that $\mathcal N_\beta$ is a $C^1$-submanifold with the same proof of \cite[Lemma 2.3]{xu2025least}, while the first part of  \cite[Theorem 1.1]{xu2025least} implies that $\mathcal N_\beta$ is closed with respect to the strong topology in $H$ for 
$\beta<\underline{\beta}(\lambda_1,\lambda_2)$ with $\underline{\beta}(\lambda_1,\lambda_2)$ defined in \cite[Theorem 1.1]{xu2025least}. In particular, we note that 
\begin{equation*}\label{beta segnato lambda positivi}
 \underline{\beta}(\lambda_1,\lambda_2)=\frac{\min\left\{\sqrt{L_{\lambda_1}},\sqrt{L_{\lambda_2}}\right\}}{\sqrt{L_{\lambda_1}+L_{\lambda_2}}} \quad \text{ in the particular case $\lambda_1,\lambda_2>0$. }
\end{equation*}
We emphasize that, although the proofs in \cite{xu2025least} are developed in \( H_0^1(\Omega) \) since the paper deals with Dirichlet boundary condition, they remain valid in \( H^1(\Omega) \).

The Ekeland's Variational Principle \cite{ekeland1974variational} implies that there exists a minimizing sequence $\{(u_n,v_n)\}\subset\mathcal N_\beta$ of $m_\beta$, and $\{\boldsymbol\Lambda_n\}\subset\R^2\times\tilde H$, $\boldsymbol \Lambda_n=(\Lambda_{n,1},\Lambda_{n,2},\tilde{\boldsymbol{\Lambda}}_n)=(\Lambda_{n,1},\dots,\Lambda_{n,\dim\tilde H+2})$, such that 
\begin{equation}\label{PS restrizione}
\mathcal J_\beta'(u_n,v_n)-\boldsymbol\Lambda_n\cdot\mathcal G'(u_n,v_n)=o(1)\ \text{in $H'$},\end{equation}
where $\mathcal G:H\to\R^2\times\tilde H$ is defined as follows:
\[\mathcal G(u,v):=\left(\langle\mathcal J_\beta'(u,v),(u,0)\rangle,\langle\mathcal J_\beta'(u,v),(0,v)\rangle,P_{\tilde H}\nabla\mathcal J_\beta(u,v)\right),\]
 $P_{\tilde H}:H\to\tilde H$ is the orthogonal projection, and the scalar product in $\mathbb{R}^2 \times \tilde H$ is given by
\[
(t_1, s_1, \tilde w) \cdot (t_2, s_2, \tilde z) := t_1 t_2 + s_1 s_2 + \langle \tilde w, \tilde z \rangle.
\]

Moreover, since  $\mathcal J_\beta$ is coercive on $\mathcal N_\beta$ by \cite[Lemma 2.2]{xu2025least}, we infer that $\{(u_n,v_n)\}$ is bounded in $H$.

\begin{proposition}\label{PS sequence w.c.}
Assume that $N\le4$ and $\beta\in(0,1)$ is small enough.
    Let $\{(u_n,v_n)\}\subset\mathcal N_\beta$ be a bounded sequence such that \eqref{PS restrizione} holds and $\mathcal J_\beta(u_n,v_n)=m_\beta+o(1)$. Then $\{(u_n,v_n)\}$ is a $(PS)_{m_\beta}$-sequence, i.e. $\mathcal J_\beta(u_n,v_n)=m_\beta+o(1)$ and $\mathcal J_\beta'(u_n,v_n)=o(1)$ in $H'$.
\end{proposition}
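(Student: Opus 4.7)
The plan is to establish that the Lagrange multiplier vector $\boldsymbol\Lambda_n=(\Lambda_{n,1},\Lambda_{n,2},\tilde{\boldsymbol\Lambda}_n)$ appearing in \eqref{PS restrizione} satisfies $\boldsymbol\Lambda_n\to 0$ in $\R^2\times\tilde H$. Once this is in hand, \eqref{PS restrizione} immediately yields $\mathcal J_\beta'(u_n,v_n)=o(1)$ in $H'$, which is the desired $(PS)_{m_\beta}$ property.

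First I would test \eqref{PS restrizione} against the finitely many natural directions $(u_n,0)$, $(0,v_n)$, and the vectors of an orthonormal basis $\{\phi_1,\dots,\phi_k\}$ of the finite-dimensional subspace $\tilde H$. Since $(u_n,v_n)\in\mathcal N_\beta$, the term $\langle\mathcal J_\beta'(u_n,v_n),\cdot\rangle$ vanishes on every such direction, reducing \eqref{PS restrizione} to a $(2+k)\times(2+k)$ linear system $M_n\boldsymbol\Lambda_n=o(1)$. A direct computation exploiting the Nehari identities shows that the upper-left $2\times 2$ block of $M_n$ equals
\[
\begin{pmatrix} -2|u_n|_4^4 & -2\beta\,|u_nv_n|_2^2\\ -2\beta\,|u_nv_n|_2^2 & -2|v_n|_4^4\end{pmatrix},
\]
whose determinant is bounded below by $4(1-\beta^2)|u_n|_4^4|v_n|_4^4$ via Cauchy--Schwarz. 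To turn this into uniform non-degeneracy, I would establish the lower bound $|u_n|_4,|v_n|_4\geq c>0$, which follows from the boundedness of $\{(u_n,v_n)\}$, the closedness of $\mathcal N_\beta$ for $\beta<\underline\beta(\lambda_1,\lambda_2)$ (cf.\ \cite[Theorem 1.1]{xu2025least}), and a standard contradiction argument ruling out $L^4$-vanishing of either component along $\mathcal N_\beta$.

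The cross-coupling entries and the $\tilde H\times\tilde H$ block of $M_n$ are given by $\mathcal J_\beta''(u_n,v_n)$ applied to the corresponding pairs. By the very construction of $\mathcal N_\beta$ as a generalized Nehari cone associated with the splitting $H=\tilde H\oplus H^+$, the point $(u_n,v_n)$ is a (unique) maximum of $\mathcal J_\beta$ along the fiber $\{(t_1 u_n+\tilde w_1,t_2 v_n+\tilde w_2):t_1,t_2>0,\ \tilde w\in\tilde H\}$, so the Hessian of $\mathcal J_\beta$ restricted to the tangent directions $(u_n,0)$, $(0,v_n)$, $\phi_1,\dots,\phi_k$ is negative definite. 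Combining this with the bounds on $\{(u_n,v_n)\}$ and the finite-dimensionality of $\tilde H$, a Schur-complement argument gives a uniform bound $\|M_n^{-1}\|\leq C$, whence $\boldsymbol\Lambda_n=o(1)$.

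The main obstacle will be securing the uniform negative definiteness of the Hessian along the tangent directions: a priori, the nonlinear contributions in $\mathcal J_\beta''$ might compensate the good sign of the $2\times 2$ block or the semi-definiteness of $B|_{\tilde H}$. This is where the smallness assumption $\beta<\underline\beta(\lambda_1,\lambda_2)$ and the positive lower bound on $|u_n|_4$, $|v_n|_4$ --- both inherited from the closedness of $\mathcal N_\beta$ for $\beta$ small --- enter the argument decisively.
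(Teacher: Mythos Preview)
Your strategy---show $\boldsymbol\Lambda_n\to 0$ and conclude via \eqref{PS restrizione}---is exactly the paper's, and the key inputs you isolate (the uniform lower bound $|u_n|_4,|v_n|_4\ge\delta$ from \cite[eqs.\ (18)--(19)]{xu2025least}, the $2\times 2$ block with determinant $\ge 4(1-\beta^2)|u_n|_4^4|v_n|_4^4$, and the non-positivity of $B$ on $\tilde H$) are the same ones the paper uses. The difference is in packaging: rather than inverting the full $(2+k)\times(2+k)$ matrix $M_n$, the paper tests \eqref{PS restrizione} against the \emph{single} direction $(\Lambda_{n,1}u_n+\tilde\Lambda_{n,1},\,\Lambda_{n,2}v_n+\tilde\Lambda_{n,2})$, obtaining a scalar identity which, thanks to the pointwise positive semi-definiteness of $\bigl(\begin{smallmatrix}u_n^2&\beta u_nv_n\\ \beta u_nv_n&v_n^2\end{smallmatrix}\bigr)$ for $\beta\in(0,1)$, splits cleanly into three non-positive pieces. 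This lets the paper peel off the components in stages: first $\{\boldsymbol\Lambda_n\}$ bounded, then $\tilde{\boldsymbol\Lambda}_n\to 0$, and finally $(\Lambda_{n,1},\Lambda_{n,2})\to 0$ via the $2\times 2$ block alone.

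Your route is valid but the step ``Hessian negative definite on the fiber $\Rightarrow$ $M_n$ uniformly invertible via Schur complement'' hides real work. The fiber-max property gives negative \emph{semi}-definiteness pointwise; to upgrade to a uniform lower bound you would need, for instance, $\int_\Omega u_n^2\phi^2\ge c>0$ for eigenfunctions $\phi$ spanning $H^0_1$, which does not follow from $|u_n|_4\ge\delta$ alone (the mass of $u_n$ could concentrate where $\phi$ is small). The paper sidesteps this by never needing uniform definiteness of the $\tilde H$-block: the staged argument only uses $B(\tilde{\boldsymbol\Lambda}_n,\tilde{\boldsymbol\Lambda}_n)\le 0$ together with the non-negativity of the remaining integrals, which is enough to force boundedness and then $\tilde{\boldsymbol\Lambda}_n\to 0$ by passing to the limit. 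So your Schur-complement shortcut, while morally correct, would ultimately require unpacking the same sign structure the paper exploits directly.
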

\begin{proof}
Since $\{(u_n,v_n)\}$ is bounded in $H$, we can assume that $u_n\wto u, v_n\wto v$ in $H^1(\Omega)$ and that $(u,v)\not\equiv(0,0)$. Indeed, if $(u,v)\equiv(0,0)$ the thesis follows by weak convergence.
From \cite[equations (18) and (19)]{xu2025least}, we deduce that there exists $\delta>0$ such that $|u_n|_4,|v_n|_4\ge\delta$ for every $\beta\in(0,1)$ small enough.
Notice that
\begin{align*}
\langle\mathcal G'(u_n,v_n),(tu_n+\tilde{ w}_1,s v_n+\tilde{ w}_2)\rangle\cdot(\Lambda_{n,1},\Lambda_{n,2},\tilde{\boldsymbol \Lambda}_n)&=\Lambda_{n,1}\langle\mathcal J_\beta''(u_n,v_n),((tu_n+\tilde{ w}_1,s v_n+\tilde{ w}_2),(u_n,0))\rangle\\
&\quad+\Lambda_{n,2}\langle\mathcal J_\beta''(u_n,v_n),((tu_n+\tilde{w}_1,s v_n+\tilde{ w}_2),(0,v_n))\rangle\\
&\quad+\langle\mathcal J_\beta''(u_n,v_n),((tu_n+\tilde{w}_1,s v_n+\tilde{ w}_2),\tilde{\boldsymbol \Lambda}_n)\rangle,
\end{align*}
for every $(t,s)\in\R$ and $\tilde{\boldsymbol w}\in\tilde H$. 

For every $(\varphi,\psi),(\zeta,\eta)\in H$ we have also that

\begin{align*}
\langle\mathcal J_\beta''(u,v),((\varphi,\psi),(\zeta,\eta)\rangle
&= \int_{\Omega}\nabla\zeta\cdot\nabla\varphi + \lambda_1\int_{\Omega}\zeta\varphi
+ \int_{\Omega}\nabla\eta\cdot\nabla\psi + \lambda_2\int_{\Omega}\eta\psi \\
&\quad -3\int_{\Omega}u^{2}\,\zeta\varphi -3\int_{\Omega}v^{2}\,\eta\psi
- \beta\int_{\Omega}\zeta\varphi\,v^{2} -2\beta\int_{\Omega}u\varphi\,v\eta\\
&\quad -2\beta\int_{\Omega}u\zeta\,v\psi -\beta\int_{\Omega}u^{2}\,\eta\psi.
\end{align*}

Testing \eqref{PS restrizione} with $(\Lambda_{n,1}u_n,\Lambda_{n,2}v_n)+\tilde{\boldsymbol \Lambda}_n$, we have:
\begin{align*}
o(\|{\boldsymbol \Lambda}_n\|)&=\langle\mathcal G'(u_n,v_n), (\Lambda_{n,1}u_n,\Lambda_{n,2}v_n)+\tilde{\boldsymbol \Lambda}_n \rangle \cdot (\Lambda_{n,1},\Lambda_{n,2},\tilde{\boldsymbol \Lambda}_n)\\
&=B(\tilde{\boldsymbol\Lambda}_n,\tilde{\boldsymbol\Lambda}_n)-\int_\Omega\left[u_n^2\tilde\Lambda_{n,1}^2+v_n^2\tilde\Lambda_{n,2}^2+\beta(\tilde\Lambda_{n,1}^2v_n^2+\tilde\Lambda_{n,2}^2u_n^2)\right]\\
&\quad-2\int_\Omega\big[(\Lambda_{n,1}u_n+\tilde\Lambda_{n,1})^2u_n^2+(\Lambda_{n,2}v_n+\tilde\Lambda_{n,2})^2v_n^2\\
&\quad+2\beta u_nv_n(\Lambda_{n,1}u_n+\tilde\Lambda_{n,1})(\Lambda_{n,2}v_n+\tilde\Lambda_{n,2})\big].\\
\end{align*}
In particular,
\begin{equation}\label{eq o(1)}
\begin{aligned}
o(1)&=\frac{B(\tilde{\boldsymbol\Lambda}_n,\tilde{\boldsymbol\Lambda}_n)}{\|{\boldsymbol\Lambda}_n\|}-\frac{1}{\|{\boldsymbol\Lambda}_n\|}\int_\Omega\left[u_n^2\tilde\Lambda_{n,1}^2+v_n^2\tilde\Lambda_{n,2}^2+\beta(\tilde\Lambda_{n,1}^2v_n^2+\tilde\Lambda_{n,2}^2u_n^2)\right]\\
&\quad-\frac{2}{\|{\boldsymbol\Lambda}_n\|}\int_\Omega\big[(\Lambda_{n,1}u_n+\tilde\Lambda_{n,1})^2u_n^2+(\Lambda_{n,2}v_n+\tilde\Lambda_{n,2})^2v_n^2\\
&\quad+2\beta u_nv_n(\Lambda_{n,1}u_n+\tilde\Lambda_{n,1})(\Lambda_{n,2}v_n+\tilde\Lambda_{n,2})\big].
\end{aligned}
\end{equation}
Since the matrix
\[
\begin{bmatrix}
u_n^2&\beta u_nv_n\\
\beta u_nv_n&v_n^2
\end{bmatrix}
\]
is positive (semi-)definite, we obtain that
\[
\int_\Omega\big[(\Lambda_{n,1}u_n+\tilde\Lambda_{n,1})^2u_n^2+(\Lambda_{n,2}v_n+\tilde\Lambda_{n,2})^2v_n^2+2\beta u_nv_n(\Lambda_{n,1}u_n+\tilde\Lambda_{n,1})(\Lambda_{n,2}v_n+\tilde\Lambda_{n,2})\big]\ge0.
\]

Combining this with \eqref{eq o(1)}, we deduce that $\{\|\boldsymbol\Lambda_n\|\}$ is bounded in $\R$; indeed, if this is not the case, then (up to a subsequence)
\[
o(1)\leq \frac{B(\tilde{\boldsymbol\Lambda}_n,\tilde{\boldsymbol\Lambda}_n)}{\|{\boldsymbol\Lambda}_n\|}\to -\infty,
\]
a contradiction. Hence, we can assume that $\boldsymbol\Lambda_n\to\boldsymbol\Lambda=(\Lambda_1,\Lambda_2,\tilde{\boldsymbol \Lambda})\in\R^2\times\tilde H$.  If $\tilde{\boldsymbol\Lambda}\not\equiv0$, we have that
\[
o(1)\le\frac{B(\tilde{\boldsymbol\Lambda}_n,\tilde{\boldsymbol\Lambda}_n)}{\|{\boldsymbol\Lambda}_n\|}-\frac{1}{\|{\boldsymbol\Lambda}_n\|}\int_\Omega\left[u_n^2\tilde\Lambda_{n,1}^2+v_n^2\tilde\Lambda_{n,2}^2+\beta(\tilde\Lambda_{n,1}^2v_n^2+\tilde\Lambda_{n,2}^2u_n^2)\right]
\]
and the right hand side goes to a negative number as $n\to+\infty$, a contradiction. Therefore,  $\tilde{\boldsymbol\Lambda}\equiv0$, and one has that 
\begin{align*}
o(\|\boldsymbol\Lambda_n\|)&=-2\Lambda_{n,1}^2\int_\Omega u_n^4-4\beta\Lambda_{n,1}\Lambda_{n,2}\int_\Omega u_n^2v_n^2-2\Lambda_{n,2}^2\int_\Omega v_n^4+o(\|\boldsymbol{\Lambda}_n\|)\\
&\le-c_n|(\Lambda_{n,1},\Lambda_{n,2})|^2+o(\|\boldsymbol{\Lambda}_n\|),
\end{align*}
where $c_n>0$ is the first eigenvalue of the matrix
\[
\begin{bmatrix}
-2\int_\Omega u_n^4&-2\beta\int_\Omega u_n^2v_n^2\\
-2\beta\int_\Omega u_n^2v_n^2&-2\int_\Omega v_n^4
\end{bmatrix},
\]
 and $c_n\ge\varepsilon$ for some $\varepsilon>0$, since $|u_n|_4,|v_n|_4\ge\delta$. Thus, $\Lambda_{n,1},\Lambda_{n,2}=o(1)$.
\end{proof}

\begin{theorem}\label{Compactness w.c.}
   Let $N=4$ and  assume $\beta\in(0,1)$ is small enough. Let $\{(u_n,v_n)\}\subset\mathcal N_\beta$ be a Palais-Smale sequence at level $m_\beta$ and assume that 
   \begin{equation}\label{threshold w.c.}
m_\beta\le L_{\lambda_1}+L_{\lambda_2}\quad \text{and } \quad m_\beta<\frac{S^2}{4(1+\beta)}
 \end{equation}
   holds, where $L_{\lambda_i}$ is the least energy of the scalar equation $-\Delta u+\lambda_iu=u^3$ in $H^1(\Omega)$. Then, there exists $(u,v)\in\mathcal N_\beta$ such that $\mathcal J_\beta(u,v)=m_\beta$ and $\mathcal J_\beta'(u,v)=0$.
\end{theorem}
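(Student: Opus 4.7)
The plan is to follow the compactness scheme already carried out in the proof of Theorem~\ref{PS condition beta>0}, combined with a case analysis on the weak limit designed to rule out semi-trivial scenarios and thereby guarantee that the limit point actually lies in $\mathcal N_\beta$. First I would extract a subsequence with $(u_n,v_n)\wto(u,v)$ in $H$ and strongly in $L^p(\Omega)$ for $p\in[1,4)$; since $\mathcal J_\beta'(u_n,v_n)=o(1)$ in $H'$ by Proposition~\ref{PS sequence w.c.}, passing to the limit yields $\mathcal J_\beta'(u,v)=0$.

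Setting $\theta_n:=u_n-u$ and $\sigma_n:=v_n-v$, the Brezis--Lieb lemma together with the Palais--Smale condition give, exactly as in \eqref{brezis lieb eq1}--\eqref{nehari equation+o(1)}, the asymptotic identities
\[
|\nabla\theta_n|_2^2=\int_\Omega(\theta_n^4+\beta\theta_n^2\sigma_n^2)+o(1),\qquad |\nabla\sigma_n|_2^2=\int_\Omega(\sigma_n^4+\beta\theta_n^2\sigma_n^2)+o(1),
\]
together with the energy splitting
\[
m_\beta+o(1)=\mathcal J_\beta(u,v)+\tfrac14\bigl(|\nabla\theta_n|_2^2+|\nabla\sigma_n|_2^2\bigr)+o(1).
\]
Let $\xi_i$ be the (finite, up to subsequence) limits of $|\nabla\theta_n|_2^2$ and $|\nabla\sigma_n|_2^2$. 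Cherrier's inequality (Lemma~\ref{Cherrier}), applied exactly as in the proof of Theorem~\ref{PS condition beta>0}, produces the familiar trichotomy: either $\xi_1=\xi_2=0$, or exactly one $\xi_i$ vanishes while the other is at least $S^2/2$, or both are positive and $\xi_1+\xi_2\ge S^2/(1+\beta)$.

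The core of the argument is then to exclude semi-trivial limits. If $u,v\not\equiv 0$, the identity $B_1(u,u)+B_2(v,v)=|u|_4^4+2\beta|uv|_2^2+|v|_4^4>0$ yields $(u,v)\notin\tilde H$, so that $(u,v)\in\mathcal N_\beta$ and $\mathcal J_\beta(u,v)\ge m_\beta$; the energy identity then forces $\xi_1+\xi_2\le 0$, and strong convergence follows. Assume instead $u\equiv 0$ (the case $v\equiv 0$ is symmetric); the lower bound $|u_n|_4\ge\delta>0$ established inside the proof of Proposition~\ref{PS sequence w.c.} implies $\theta_n=u_n\not\to 0$ in $L^4$, hence $\xi_1>0$. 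If moreover $v\not\equiv 0$, then $v$ solves the scalar equation and $\mathcal J_\beta(0,v)=\Psi_{\lambda_2}(v)\ge L_{\lambda_2}$. Should $\xi_2=0$, the trichotomy yields $\xi_1\ge S^2/2$ and therefore
\[
m_\beta\ge L_{\lambda_2}+\tfrac{S^2}{8}>L_{\lambda_1}+L_{\lambda_2},
\]
using the strict estimate $L_{\lambda_1}<S^2/8$ from Lemma~\ref{ground state scalar}, which contradicts the first bound in \eqref{threshold w.c.}. Should $\xi_2>0$, Cherrier gives
\[
m_\beta\ge L_{\lambda_2}+\tfrac{S^2}{4(1+\beta)}>\tfrac{S^2}{4(1+\beta)},
\]
contradicting the second bound. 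Finally, if $u\equiv v\equiv 0$, the same $L^4$ lower bound (applied to both components) forces $\xi_1,\xi_2>0$, and Cherrier directly produces $m_\beta\ge S^2/(4(1+\beta))$, once again a contradiction.

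The main obstacle is precisely the mixed semi-trivial case $u\equiv 0$, $v\not\equiv 0$ (and its symmetric counterpart): there the two thresholds in \eqref{threshold w.c.} must be used in tandem, because only one of them is strong enough in each sub-case. The bound $m_\beta\le L_{\lambda_1}+L_{\lambda_2}$, combined with the strict scalar estimate $L_{\lambda_i}<S^2/8$, is needed when the vanishing component alone concentrates, while the bound $m_\beta<S^2/(4(1+\beta))$ becomes unavoidable as soon as both components blow up simultaneously and Cherrier only yields the joint lower bound $\xi_1+\xi_2\ge S^2/(1+\beta)$. Once strong convergence is achieved in the fully non-trivial case, the closedness of $\mathcal N_\beta$ for $\beta<\underline\beta(\lambda_1,\lambda_2)$ mentioned earlier ensures $(u,v)\in\mathcal N_\beta$ with $\mathcal J_\beta(u,v)=m_\beta$ and $\mathcal J_\beta'(u,v)=0$.
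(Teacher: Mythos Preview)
Your proof is correct and follows essentially the same strategy as the paper's: extract a weak limit, use Brezis--Lieb and Cherrier to set up the $\xi_1,\xi_2$ trichotomy, and rule out trivial and semi-trivial limits via the two bounds in \eqref{threshold w.c.}. Your case analysis in the semi-trivial scenario (splitting into $\xi_2=0$ versus $\xi_2>0$) is in fact slightly more explicit than the paper's written argument, which reuses the displayed inequality from the competitive case that drops the cross term---transparent when $\beta<0$, but for $\beta>0$ it is precisely your sub-case distinction that makes the step rigorous.
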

\begin{proof}
The boundedness of $\{(u_n,v_n)\}$ follows by the coercivity of the functional \({\mathcal J_\beta}|_{\mathcal N_\beta}\). Hence, we can assume that $u_n\wto u, v_n\wto v$ weakly in \(H^1(\Omega)\). 
We denote $\theta_n:=u_n-u,\sigma_n:=v_n-v$. We can repeat the proof of Theorem \ref{PS condition beta>0} to obtain \eqref{Jb'=0 eq1}-\eqref{eq. energia}.  We may also assume that  the following limits
 are finite:
\[\xi_1:=\lim_{n\to+\infty}\int_{\Omega}|\nabla\theta_n|^2,\ \ \xi_2:=\lim_{n\to+\infty}\int_{\Omega}|\nabla\sigma_n|^2,\]
 and we can prove that
\[
\xi_1\le\frac{2}{S^2}(\xi_1^2+\beta\xi_1\xi_2),\qquad \xi_2\le\frac{2}{S^2}(\xi_2^2+\beta\xi_1\xi_2).
\]
\textbf{Step 1}: $(u,v)\not\equiv(0,0)$. Assume by contradiction that $(u,v)\equiv(0,0)$.
 From \cite[equations (18) and (19)]{xu2025least}, we deduce that there exists $\delta>0$ such that $|u_n|_4,|v_n|_4\ge\delta$ for every $\beta\in(0,1)$ small enough and  $\xi_1,\xi_2>0$. Thus, $\xi_1+\xi_2\ge\frac{S^2}{1+\beta}$. 
This contradicts $m_\beta<\frac{S^2}{4(1+\beta)}$. \\
\textbf{Step 2}:  \(u \not\equiv 0\) and \(v \not\equiv 0\). 
This part of the proof is inspired by \cite[Proof of Theorem 1.3, pages 539-540]{chen2012positive}, but we exploit the Cherrier's inequality instead of the Sobolev inequality.
Assume that $u\not\equiv0, v\equiv0$. Thus, \(u\) is a solution 
for $-\Delta u+\lambda_1u=u^3$ in $\Omega$ with $\frac{\partial u}{\partial\nu}=0$ on $\partial\Omega$ and $\mathcal J_\beta(u,0)\ge L_{\lambda_1}$.
Furthermore, 
\[\int_{\Omega}|\nabla v_n|^2 + o(1) =\int_{\Omega}v_n^4+\beta\int_{\Omega}\theta_n^2v_n^2\le\left(\frac{\sqrt2}{S}+\varepsilon\right)^2|\nabla v_n|_2^4+o(1).\]
Therefore, $\xi_2\ge\frac{S^2}{2}$  and this implies that
\[m_\beta=\mathcal J_\beta(u,0)+\frac14\xi_2\ge L_{\lambda_1}+\frac{S^2}{8}> L_{\lambda_1}+L_{\lambda_2},
\]
where we used $L_{\lambda_2}<\frac{S^2}{8}$, see \eqref{threshold scalar equation}.
We obtain a contradiction with \eqref{threshold w.c.}. 
The case $u\equiv0,v\not\equiv0$ is similar. 
Now,
\[m_\beta+o(1)=\mathcal J_\beta(u_n,v_n)=\mathcal J_\beta(u,v)+\frac14\int_\Omega(|\nabla {\theta_n}|^2+|\nabla {\sigma_n}|^2)\ge m_\beta.\]
Hence, the convergence is strong, and we obtain $\mathcal J_\beta(u,v)=m_\beta$.
\end{proof}

\begin{lemma}\label{stime energia w.c. parte1}
Let $N\le4$ and $\beta\in(0,1)$. We have that $m_\beta\leq L_{\lambda_1}+L_{\lambda_2}$.
\end{lemma}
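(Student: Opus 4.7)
The plan is to exhibit an explicit element of $\mathcal{N}_\beta$ with energy bounded above by $L_{\lambda_1}+L_{\lambda_2}$, so that the desired inequality follows directly from the definition of $m_\beta$. I will take least energy solutions $u_1,u_2\in H^1(\Omega)$ of the scalar equation \eqref{scalar eq} with $\lambda=\lambda_1$ and $\lambda=\lambda_2$, and decompose $u_i=\phi_i+\tilde u_i$ with $\phi_i\in H^+_i$ and $\tilde u_i\in\tilde H_i$. Since $L_{\lambda_i}>0$ while $\Psi_{\lambda_i}$ is non-positive on $\tilde H_i$, one cannot have $u_i\in\tilde H_i$, so $\phi_i\neq 0$.

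The key object will be the auxiliary function
\[
F(t,s,\tilde w_1,\tilde w_2):=\mathcal{J}_\beta\bigl(t\phi_1+\tilde w_1,\ s\phi_2+\tilde w_2\bigr)
\]
on $[0,+\infty)^2\times\tilde H_1\times\tilde H_2$. I will first show that $F$ attains a positive supremum at some point $(t^*,s^*,\tilde w_1^*,\tilde w_2^*)$ with $t^*,s^*>0$: the quartic terms together with the fact that $B_i$ is negative (semi-)definite on $\tilde H_i$ make $F\to-\infty$ as $t+s+\|\tilde w_1\|+\|\tilde w_2\|\to\infty$, while $B_i(\phi_i,\phi_i)>0$ makes $F>0$ for $\tilde w_i=0$ and small $t,s>0$, which rules out a maximum at $t=0$ or $s=0$. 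Setting $\bar u:=t^*\phi_1+\tilde w_1^*$ and $\bar v:=s^*\phi_2+\tilde w_2^*$, the vanishing of $\partial_t F,\partial_s F,\nabla_{\tilde w_1}F,\nabla_{\tilde w_2}F$ at the maximizer translates respectively into $\langle\mathcal{J}'_\beta(\bar u,\bar v),(\phi_1,0)\rangle=0$, $\langle\mathcal{J}'_\beta(\bar u,\bar v),(0,\phi_2)\rangle=0$, and $\langle\mathcal{J}'_\beta(\bar u,\bar v),\tilde{\boldsymbol w}\rangle=0$ for every $\tilde{\boldsymbol w}\in\tilde H$. Combining the first identity linearly with the restriction of the third to $\tilde H_1$ yields $\langle\mathcal{J}'_\beta(\bar u,\bar v),(\bar u,0)\rangle=t^*\cdot 0+0=0$, and analogously $\langle\mathcal{J}'_\beta(\bar u,\bar v),(0,\bar v)\rangle=0$. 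Since $t^*,s^*>0$ and $\phi_i\in H^+_i\setminus\{0\}$, the pair $(\bar u,\bar v)$ lies outside $\tilde H$ and has no vanishing component; hence $(\bar u,\bar v)\in\mathcal{N}_\beta$.

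To conclude, I will estimate $F$ from above using $\beta>0$:
\[
F(t,s,\tilde w_1,\tilde w_2)=\Psi_{\lambda_1}(t\phi_1+\tilde w_1)+\Psi_{\lambda_2}(s\phi_2+\tilde w_2)-\frac{\beta}{2}\int_\Omega (t\phi_1+\tilde w_1)^2(s\phi_2+\tilde w_2)^2\leq \Psi_{\lambda_1}(t\phi_1+\tilde w_1)+\Psi_{\lambda_2}(s\phi_2+\tilde w_2),
\]
and then maximize each summand separately. Since $\phi_i$ is the positive-space projection of the scalar ground state $u_i$, the generalized Nehari characterization \eqref{caratterizzazione l.e.s. scalare} gives $\sup_{t>0,\tilde w\in\tilde H_i}\Psi_{\lambda_i}(t\phi_i+\tilde w)=\Psi_{\lambda_i}(u_i)=L_{\lambda_i}$. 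Therefore $\mathcal{J}_\beta(\bar u,\bar v)=F(t^*,s^*,\tilde w_1^*,\tilde w_2^*)\leq L_{\lambda_1}+L_{\lambda_2}$, from which $m_\beta\leq L_{\lambda_1}+L_{\lambda_2}$ follows. The main obstacle I expect is verifying that the supremum of $F$ is actually attained in the interior of the parameter set and that the resulting critical-point identities precisely match the defining relations of $\mathcal{N}_\beta$; both rely on the finite-dimensionality of $\tilde H$ together with the standard concavity structure behind the generalized Nehari method.
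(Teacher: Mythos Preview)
Your approach is essentially the same as the paper's: drop the interaction term using $\beta>0$, bound each scalar piece by $L_{\lambda_i}$ via the generalized Nehari characterization \eqref{caratterizzazione l.e.s. scalare}, and produce a point of $\mathcal{N}_\beta$ of the form $(t\phi_1+\tilde w_1,\, s\phi_2+\tilde w_2)$. The paper does exactly this, except that it uses the full ground states $u_i$ as directions rather than their projections $\phi_i$ (these give the same family, since $\{tu_i+\tilde w : t>0,\ \tilde w\in\tilde H_i\}=\{t\phi_i+\tilde w : t>0,\ \tilde w\in\tilde H_i\}$) and outsources the existence of the projection onto $\mathcal{N}_\beta$ to \cite[Lemma~2.6]{xu2025least}.

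The one concrete gap is your justification that the maximizer of $F$ has $t^*,s^*>0$. Showing $\sup F>0$ does \emph{not} rule out a boundary maximizer: on $\{t=0\}$ the first-order condition in $\tilde w_1$ forces $\tilde w_1^*=0$ (since $B_1\le 0$ on $\tilde H_1$ while $\int(\tilde w_1^*)^4+\beta\int(\tilde w_1^*)^2\bar v^2\ge 0$), but then $F(0,s,0,\tilde w_2)=\Psi_{\lambda_2}(s\phi_2+\tilde w_2)$ attains the value $L_{\lambda_2}>0$. At such a point the second-order test reads $\partial_t^2 F|_{t=0}=B_1(\phi_1,\phi_1)-\beta\int_\Omega\phi_1^2\bar v^2$, whose sign is not controlled by $\beta\in(0,1)$ alone, so your sentence ``$B_i(\phi_i,\phi_i)>0$ \dots\ rules out a maximum at $t=0$ or $s=0$'' is not a valid argument. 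The interior-maximum statement is true, but it needs the more careful fibering/concavity analysis carried out in \cite[Lemma~2.6]{xu2025least}; the paper simply cites that lemma. Once that step is granted, the rest of your argument is correct and matches the paper line by line.
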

\begin{proof}
Let $u_{i}$ be a least energy solution of \eqref{scalar eq} at level $L_{\lambda_i}$. For every $t,s>0$ and $\tilde{\boldsymbol w}\in\tilde H$,  we have
\begin{align*}
    \mathcal J_\beta(tu_1+\tilde w_1,su_2+\tilde w_2)&\le\frac{1}2|t\nabla u_1+\nabla\tilde w_1|_2^2+\frac{\lambda_1}{2}|tu_1+\tilde w_1|_2^2-\frac{1}{4} |tu_1+\tilde w_1|_4^4\\
    &\hspace{3mm}  +\frac{1}2|s\nabla u_2+\nabla\tilde w_2|_2^2+\frac{\lambda_2}{2}|su_2+\tilde w_2|_2^2-\frac{1}{4}|su_1+\tilde w_2|_4^4\\
    &\le\max_{\substack{t>0\\ \tilde w_1\in\tilde H_1}}\frac{1}2|t\nabla u_1+\nabla\tilde w_1|_2^2+\frac{\lambda_1}{2}|tu_1+\tilde w_1|_2^2-\frac{1}{4}|su_1+\tilde w_1|_4^4\\
    &\hspace{3mm} +\max_{\substack{s>0\\ \tilde w_2\in\tilde H_2}}\frac{1}2|s\nabla u_2+\nabla\tilde w_2|_2^2+\frac{\lambda_2}{2}|su_2+\tilde w_2|_2^2-\frac{1}{4} |su_1+\tilde w_2|_4^4\\
    &=L_{\lambda_1}+L_{\lambda_2},
\end{align*}
where we used the characterization of the ground state \eqref{caratterizzazione l.e.s. scalare} and the fact that $\beta>0$.  Now, we can choose  $t,s>0$ and  $\tilde{\boldsymbol w}\in\tilde H$  such that $(tu_1+\tilde w_1,su_2+\tilde w_2)\in\mathcal N_\beta$, according to \cite[Lemma 2.6]{xu2025least}. Then $m_\beta<L_{\lambda_1}+L_{\lambda_2}$.
\end{proof}

\begin{proposition}\label{stime energia w.c. parte2}
Assume $N=4$ and $\beta\in(0,1)$.
We have that the compactness threshold \eqref{threshold w.c.} is satisfied.
\end{proposition}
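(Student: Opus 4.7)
The plan is to estimate $m_\beta$ from above by concentrating truncated Aubin--Talenti bubbles at a boundary point of positive mean curvature and projecting onto $\mathcal{N}_\beta$. Since Lemma \ref{stime energia w.c. parte1} already supplies $m_\beta \leq L_{\lambda_1} + L_{\lambda_2}$, the remaining task is the strict bound $m_\beta < \frac{S^2}{4(1+\beta)}$.

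First I would invoke \cite[Lemma 2.6]{xu2025least}: for any $u, v \in H^1(\Omega)$ with $u, v \not\equiv 0$ there exist $t, s > 0$ and $\tilde{\boldsymbol w} \in \tilde H$ such that $(tu + \tilde w_1, sv + \tilde w_2) \in \mathcal{N}_\beta$ and this triple maximizes $\mathcal J_\beta$ over the associated four-parameter family. Applying this with $u = v = w_\varepsilon$ (the truncated bubble from Lemma \ref{Estimates for C}) yields
\[
m_\beta \;\leq\; \sup_{t,s>0,\ \tilde{\boldsymbol w}\in\tilde H} \mathcal J_\beta(tw_\varepsilon + \tilde w_1, sw_\varepsilon + \tilde w_2).
\]
Mimicking the argument of Proposition \ref{soglia}, one confines the supremum to a bounded region (the quartic terms dominate thanks to positivity of $\int w_\varepsilon^4$ and the negative semi-definiteness of $B$ on $\tilde H$) and then shows that the contribution of $\tilde{\boldsymbol w}$ is absorbed into an $o(\sqrt\varepsilon)$ error via the estimates $\int w_\varepsilon^3 \le C\sqrt\varepsilon$ and $\int w_\varepsilon \le C\sqrt\varepsilon$ from Lemma \ref{Estimates for C}, exactly as in equations (4.12)--(4.13) of \cite{clapp2022solutions}. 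This reduces the estimate to the unconstrained supremum $\sup_{t,s>0} \mathcal J_\beta(tw_\varepsilon, sw_\varepsilon)$.

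Writing $a_i := \|w_\varepsilon\|_{\lambda_i}^2$ and $b := |w_\varepsilon|_4^4$, the vanishing of $\partial_t \mathcal J_\beta$ and $\partial_s \mathcal J_\beta$ at the maximizer yields $t^2 = \frac{a_1 - \beta a_2}{(1-\beta^2)b}$ and $s^2 = \frac{a_2 - \beta a_1}{(1-\beta^2)b}$; these are positive for $\varepsilon$ small because $\beta \in (0,1)$ and $a_1 - a_2 = (\lambda_1 - \lambda_2)|w_\varepsilon|_2^2 = O(\varepsilon|\log\varepsilon|)$, so both $a_i$ are close to $S^2/2$. Substituting back gives the compact formula
\[
\sup_{t,s>0} \mathcal J_\beta(tw_\varepsilon, sw_\varepsilon) \;=\; \frac{a_1^2 - 2\beta a_1 a_2 + a_2^2}{4(1-\beta^2)\, b}.
\]
Plugging in the asymptotics $a_i = \frac{S^2}{2} - C_1\sqrt\varepsilon + o(\sqrt\varepsilon)$ and $b = \frac{S^2}{2} - C_2\sqrt\varepsilon + o(\sqrt\varepsilon)$ from Lemma \ref{Estimates for C} and simplifying via $a_1^2 + a_2^2 - 2\beta a_1 a_2 = (a_1+a_2)^2 - 2(1+\beta)a_1 a_2$ leads to
\[
\sup_{t,s>0} \mathcal J_\beta(tw_\varepsilon, sw_\varepsilon) \;=\; \frac{S^2}{4(1+\beta)} \;-\; \frac{2C_1 - C_2}{2(1+\beta)}\sqrt\varepsilon \;+\; o(\sqrt\varepsilon).
\]
The strict inequality $C_1 > C_2$ asserted in Lemma \ref{Estimates for C} makes the coefficient of $\sqrt\varepsilon$ strictly negative, so $m_\beta < \frac{S^2}{4(1+\beta)}$ for $\varepsilon$ small enough.

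The main obstacle will be the careful bookkeeping for the $\tilde H$ piece: the free parameters $t, s$ and the vector $\tilde{\boldsymbol w}$ must be jointly controlled, and one must rule out degenerate configurations where $B_i(\tilde w_i, \tilde w_i) = 0$ (i.e.\ $\tilde w_i \in H_i^0$) fails to penalize large $\tilde{\boldsymbol w}$. The rescue is the positivity of $\int (tw_\varepsilon + \tilde w_1)^2 (sw_\varepsilon + \tilde w_2)^2$ and of $\int (tw_\varepsilon + \tilde w_i)^4$ together with the arguments in (4.12)--(4.13) of \cite{clapp2022solutions}, which yield the needed $o(\sqrt\varepsilon)$ absorption and reduce the bound to the explicit computation above.
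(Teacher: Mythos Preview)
Your approach is essentially the same as the paper's: both concentrate $w_\varepsilon$ at a boundary point of positive mean curvature, invoke \cite[Lemma 2.6]{xu2025least} to project onto $\mathcal N_\beta$, absorb the $\tilde H$ contribution into an $o(\sqrt\varepsilon)$ remainder via the same estimates (Lemma \ref{Estimates for C} and (4.12)--(4.13) of \cite{clapp2022solutions}), and then reduce to the explicit two-parameter supremum $\sup_{t,s>0}\mathcal J_\beta(tw_\varepsilon,sw_\varepsilon)$, whose asymptotics yield the strict bound thanks to $C_1>C_2$. Your closed formula $\frac{a_1^2-2\beta a_1a_2+a_2^2}{4(1-\beta^2)b}$ and the expansion $\frac{S^2}{4(1+\beta)}-\frac{2C_1-C_2}{2(1+\beta)}\sqrt\varepsilon+o(\sqrt\varepsilon)$ match the paper's computation exactly.

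The one place the paper is more careful than your sketch is in justifying that the maximizer over $[0,+\infty)^2\times\tilde H$ actually lies in the open quadrant $(0,+\infty)^2$, so that it indeed belongs to $\mathcal N_\beta$ and furnishes an upper bound for $m_\beta$. The paper does not simply read $t,s>0$ off the cited lemma; instead it compares the diagonal value
\[
\mathcal J_\beta(\tau_\varepsilon w_\varepsilon,\tau_\varepsilon w_\varepsilon)=\frac{[B_1(w_\varepsilon,w_\varepsilon)+B_2(w_\varepsilon,w_\varepsilon)]^2}{8(1+\beta)|w_\varepsilon|_4^4}
\]
with the semi-trivial maxima $\max_{\tau>0,\,\tilde{\boldsymbol w}\in\tilde H}\mathcal J_\beta(\tau w_\varepsilon+\tilde w_1,\tilde w_2)$ (bounded by $\frac{B_1(w_\varepsilon,w_\varepsilon)^2}{4|w_\varepsilon|_4^4}+o(\sqrt\varepsilon)$ via \cite[Lemma 3.5]{SzulkinWethWillem}) and uses $\beta<1$ together with \eqref{stima norma 2} to conclude $t_{\varepsilon,2}>0$, and symmetrically $t_{\varepsilon,1}>0$. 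You should include this verification, since without it the inequality $m_\beta\le\sup_{t,s>0,\,\tilde{\boldsymbol w}}\mathcal J_\beta(\ldots)$ is not yet justified.
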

\begin{proof} 
The condition $m_\beta\leq L_{\lambda_1}+L_{\lambda_2}$ follows by Lemma \ref{stime energia w.c. parte1}. It remains to prove the inequality $m_\beta<\frac{S^2}{4(1+\beta)}$.
Arguing as in \cite[Lemma 2.6]{xu2025least} we find $\boldsymbol{t}_\varepsilon=(t_{\varepsilon,1},t_{\varepsilon,2})\in[0,+\infty)^2$ and $\tilde{\boldsymbol w}_\varepsilon\in\tilde H$ such that
\[
\mathcal J_\beta(\boldsymbol{t}_\varepsilon w_\varepsilon+\tilde{\boldsymbol w}_\varepsilon)=\sup_{\substack{t,s\in\R\\ \tilde{\boldsymbol w}\in\tilde H}}\mathcal J_\beta(tw_\varepsilon+\tilde w_1,sw_\varepsilon+\tilde w_2).
\]
Let $\tau_\varepsilon>0$ be such that 
\[\max_{\substack{\tau>0}}\mathcal J_\beta(\tau w_\varepsilon,\tau w_\varepsilon)=\mathcal J_\beta(\tau_\varepsilon w_\varepsilon,\tau_\varepsilon w_\varepsilon).\]
Notice that $\tau_\varepsilon>0$ is well-defined by \eqref{stima norma 2}. 
Reasoning as in the last part of the proof of \cite[Lemma 3.5]{SzulkinWethWillem} we obtain that
\[
\frac14\ \frac{B_1\left(w_\varepsilon, w_\varepsilon\right)^2}{\displaystyle\int_\Omega w_\varepsilon^4}+o(\sqrt\varepsilon)\ge\max_{\substack{\tau>0\\\tilde z\in\tilde H_1}}\mathcal J_\beta(\tau w_\varepsilon+\tilde z,0)\ge\max_{\substack{\tau>0\\\tilde w\in\tilde H}}\mathcal J_\beta(\tau w_\varepsilon+\tilde w_1,\tilde w_2).
\]
Using \eqref{stima norma 2} and $\beta\in(0,1)$, we have that, 
\begin{align*}
\mathcal J_\beta(\boldsymbol{t}_\varepsilon w_\varepsilon+\tilde{\boldsymbol w}_\varepsilon)&\ge\mathcal J_\beta(\tau_\varepsilon w_\varepsilon,\tau_\varepsilon w_\varepsilon)=\frac{\tau_\varepsilon^2}{2} B( w_\varepsilon, w_\varepsilon)-\frac{(1+\beta)\tau_\varepsilon^4}{2}\int_\Omega w_\varepsilon^4\\
&=\max_{\tau>0}\left[ 
\frac{\tau^2}{2}[B_1(w_\varepsilon,w_\varepsilon)+B_2(w_\varepsilon,w_\varepsilon)]-\frac{(1+\beta)\tau^4}{2}\int_\Omega w_\varepsilon^4\right]=\frac{[B_1(w_\varepsilon,w_\varepsilon)+B_2(w_\varepsilon,w_\varepsilon)]^2}{(8+8\beta)\displaystyle\int_\Omega w_\varepsilon^4}\\
&=\frac{4|\nabla w_\varepsilon|_2^4+o(\sqrt\varepsilon)}{8(1+\beta)|w_\varepsilon|_4^4}=\frac{B_1(w_\varepsilon,w_\varepsilon)^2}{2(1+\beta)|w_\varepsilon|_4^4}+\frac{o(\sqrt\varepsilon)}{8(1+\beta)|w_\varepsilon|_4^4}=\frac{B_1(w_\varepsilon,w_\varepsilon)^2}{2(1+\beta)|w_\varepsilon|_4^4}+o(\sqrt\varepsilon)\\
&>\frac14\ \frac{B_1\left(w_\varepsilon, w_\varepsilon\right)^2}{\displaystyle\int_\Omega w_\varepsilon^4}+o(\sqrt\varepsilon)\ge\max_{\substack{\tau>0\\\tilde w\in\tilde H}}\mathcal J_\beta(\tau w_\varepsilon+\tilde w_1,\tilde w_2),
\end{align*}
for $\varepsilon>0$ small enough. 
Then, $t_{\varepsilon,2}>0$ and one can prove $t_{\varepsilon,1}>0$ similarly. If $\boldsymbol{t}_\varepsilon\in(0,+\infty)^2$, we can estimate 
\[m_\beta\le\max_{\substack{t,s>0\\ \tilde {\boldsymbol w}\in\tilde H}}\mathcal J_\beta(tw_\varepsilon+\tilde w_1,sw_\varepsilon+\tilde w_2)<\frac{S^2}{4(1+\beta)}\]
using the same arguments of  Proposition \ref{soglia}.
Indeed,
\begin{align*}
\mathcal J_\beta(tw_\varepsilon+\tilde w_1,sw_\varepsilon+\tilde w_2)&\le c_1\bigl[t^2+s^2+t^3(\|\tilde w_1\|+\|\tilde w_2\|)+s^3(\|\tilde w_1\|+\|\tilde w_2\|)\sqrt\varepsilon\\
&\quad+t(\|\tilde w_1\|+\|\tilde w_2\|)+s(\|\tilde w_1\|+\|\tilde w_2\|)\bigr]-c_2(t^4+\|\tilde w_1\|^4+\|\tilde w_2\|^4).
\end{align*}
Thus, one can assume that $t,s,\|\tilde w\|\le C$. Moreover,

\begin{align*}
      \mathcal J_\beta(\tilde w_1+tw_\varepsilon,\tilde w_2+tw_\varepsilon)&\le \mathcal J_\beta(tw_\varepsilon,sw_\varepsilon)-t^3\int_{\Omega} (\tilde w_1+\tilde w_2)w_\varepsilon^3-s^3\int_{\Omega} (\tilde w_1+\tilde w_2)w_\varepsilon^3 \\
      &\quad-\beta t^2s\int_{\Omega}  \tilde w_2 w_\varepsilon^3-\beta ts^2\int_\Omega \tilde w_1 w_\varepsilon -2\beta ts\int_{\Omega} \tilde w_1\tilde w_2w_\varepsilon^2\\
       &\quad -k_2(\|\tilde w_1\|^4+\|\tilde w_2\|^4) -\frac{\beta k_1}{2} \|\tilde w_1\|^2 \|\tilde w_2\|^2+tB_1(\tilde w_1,w_\varepsilon)+sB_2(\tilde w_2,w_\varepsilon).
      \end{align*}
Therefore,
\begin{align*}
\mathcal J_\beta(tw_\varepsilon+\tilde w_1,sw_\varepsilon+\tilde w_2)&\le\mathcal J_\beta(tw_\varepsilon,sw_\varepsilon)+o(\sqrt\varepsilon).
\end{align*}
Since 
\begin{align*}
\max_{t,s>0}\mathcal J_\beta(tw_\varepsilon,sw_\varepsilon)&=\frac14\left[\frac{B_1(w_\varepsilon,w_\varepsilon)^2-\beta B_1(w_\varepsilon,w_\varepsilon)\cdot B_2(w_\varepsilon,w_\varepsilon)}{(1-\beta^2)|w_\varepsilon|_4^4}+\frac{B_2(w_\varepsilon,w_\varepsilon)-\beta B_1(w_\varepsilon,w_\varepsilon)\cdot B_2(w_\varepsilon,w_\varepsilon)}{(1-\beta^2)|w_\varepsilon|_4^4}\right]\\
        &=\frac14\biggl[\frac{(1-\beta)\left(\frac{S^2}{2}-C_1\sqrt\varepsilon+O(\varepsilon)+O(\varepsilon|\log\varepsilon|)\right)^2}{(1-\beta^2)\left(\frac{S^2}{2}-C_2\sqrt\varepsilon+O(\varepsilon)\right)}\\
        &+\frac{(1-\beta)\left(\frac{S^2}{2}-C_1\sqrt\varepsilon+O(\varepsilon)+O(\varepsilon|\log\varepsilon|)\right)^2}{(1-\beta^2)\left(\frac{S^2}{2}-C_2\sqrt\varepsilon+O(\varepsilon)\right)}\biggr]\\
        &=\frac{S^2}{4(1+\beta)}-C\sqrt\varepsilon+o(\sqrt\varepsilon),
\end{align*}
we obtain that
\[
m_\beta\le\mathcal J_\beta(tw_\varepsilon,sw_\varepsilon)+o(\sqrt\varepsilon)\le\frac{S^2}{4(1+\beta)}-C\sqrt\varepsilon+o(\sqrt\varepsilon)<\frac{S^2}{4(1+\beta)}
\]
for $\varepsilon>0$ small enough. This concludes the proof.
\end{proof}

\begin{proof}[Proof of Theorem \ref{Theorem 1.1}-$(i)$]\hypertarget{proof of thm1.1 i}{} 
By Ekeland's Variational Principle \cite{ekeland1974variational}, we can assume that there exists a minimizing sequence $\{(u_n,v_n)\}\subset \mathcal N_\beta$ such that \eqref{PS restrizione} holds. 
and the coercivity of $\mathcal J_\beta$ on $\mathcal N_\beta$ implies also the boundedness of the sequence. According to Proposition \ref{PS sequence w.c.}, such minimizing sequence is also a $(PS)_{m_\beta}$-sequence for $\mathcal{J}_\beta$. Now, if $N\le3$ the $(PS)$-condition follows by Theorem \ref{PS condition beta>0} and $u,v\not\equiv0$ by Lemma \ref{stime energia w.c. parte1}. On the other hand, if $N=4$, Theorem \ref{Compactness w.c.} and Proposition \ref{stime energia w.c. parte2} imply the existence of a weak solution $(u,v)\in H$ of \eqref{Pb} such that $u,v\not\equiv0$ and $\mathcal J_\beta(u,v)=m_\beta$. Since any fully nontrivial critical point of $\mathcal J_\beta$ belongs to $\mathcal N_\beta$, we have that $m_\beta=l_\beta$ and we proved the existence of a least energy solution of \eqref{Pb}.

We now prove that any least energy solution is non-constant. We focus only on the case in which constant fully nontrivial solutions $(c_1,c_2)$ exist, see Proposition \ref{constant solution}.
 In particular,  if $(c_1,c_2)$ is a least energy solution, then 
 \[m_\beta=\mathcal J_\beta(c_1,c_2)=\frac{\lambda_1^2-2\beta\lambda_1\lambda_2+\lambda_2^2}{4(1-\beta^2)}|\Omega|.\] 
 From Lemma \ref{stime energia w.c. parte1} and \eqref{stima ground state scalare}, we obtain that 
\[\frac{\lambda_1^2-2\beta\lambda_1\lambda_2+\lambda_2^2}{4(1-\beta^2)}|\Omega|=m_\beta<L_{\lambda_1}+L_{\lambda_2}\le M\left(\lambda_1^{\frac{4-N}{2}}+\lambda_2^{\frac{4-N}{2}}\right), \]
which contradicts \eqref{eq non costanti w.c.}. 
\end{proof}
\begin{remark}\label{remark constants i}
 By Young's inequality, we obtain that 
 \[\frac{\lambda_1^2-2\beta\lambda_1\lambda_2+\lambda_2^2}{4(1-\beta^2)}|\Omega|\ge\frac{\lambda_1^2+\lambda_2^2}{4(1+\beta)}|\Omega| 
 .\]
Thus, in view of Theorem \ref{Theorem 1.1}-$(i)$, there exists a non-constant least energy solution if
\[
\beta<\min\left\{\frac{\lambda_1^2+\lambda_2^2}{4M\left(\lambda_1^{\frac{4-N}{2}}+\lambda_2^{\frac{4-N}{2}}\right)}-1,\underline{\beta}(\lambda_1,\lambda_2)\right\}.
\]
\end{remark}

\section{Competitive  case for \texorpdfstring{$\lambda_1,\lambda_2>0$}{lambda1,lambda2>0}}\label{competitive lambda positive}
Assume that $\beta<0$ and that $\lambda_1,\lambda_2>0$, and prove Theorem \ref{Theorem 1.2}. In this framewok, the set $\mathcal{N}_\beta$ introduced in Subsection \ref{weakly cooperative case}  becomes
\[
\mathcal N_\beta=
\left\{
    (u,v)\in H\ :\ \ u\not\equiv0,\ v\not\equiv0,\ \|u\|_{\lambda_1}^2=\int_{\Omega}(u^4+\beta u^2v^2),\ \ 
    \|v\|_{\lambda_2}^2=\int_{\Omega}(v^4+\beta u^2v^2)
\right\}.
\]

Recall the notation:
\[m_\beta:=\inf_{(u,v)\in\mathcal N_\beta}\mathcal J_\beta(u,v).\]
Since $\mathcal J_\beta(u,v)=\frac14(\|u\|_{\lambda_1}^2+\|v\|_{\lambda_2}^2)$ for every $(u,v)\in\mathcal N_\beta$, we have that $\mathcal J_\beta$ is coercive on $\mathcal N_\beta$.
\begin{remark}\label{Nehari property 1}
   Let $(u,v)\in\mathcal N_\beta$. By the Sobolev embedding $H^1(\Omega)\hookrightarrow L^4(\Omega)$ we have that
    \begin{equation}\label{inequality 1}
    \begin{split}
        \|u\|_{\lambda_1}^2&\le \|u\|_{\lambda_1}^2-\beta\int_{\Omega}u^2v^2=\int_{\Omega}u^4\le C_{S,\lambda_1}^2\|u\|_{\lambda_1}^4\\
     \|v\|_{\lambda_2}^2&\le \|v\|_{\lambda_2}^2-\beta\int_{\Omega}u^2v^2=\int_{\Omega}v^4\le C_{S,\lambda_2}^2\|v\|_{\lambda_2}^4
    \end{split}
    \end{equation}
    Then $\|u\|_{\lambda_1},\|v\|_{\lambda_2}\ge C>0,$ where $C=\min\{C_{S,\lambda_1}^{-1},C_{S,\lambda_2}^{-1}\}$ and \[C_{S,\lambda_i}:=\inf_{u\in H^1(\Omega)\setminus\{0\}}\frac{|\nabla u|_2^2+\lambda_i|u|_2^2}{|u|_4^{2}}.\] \\ Moreover, by \eqref{inequality 1} we get that $|u|_4^4, |v|_4^4\ge C^2>0$.  In particular $m_\beta>0$.
\end{remark}

\begin{lemma}[{\cite[Lemma 2.4]{tavares2012existence}}]\label{Nehari Manifold}
Let $N\le4$. The set $\mathcal N_\beta\subset H$ is a submanifold of codimension two for any $\beta<0$ and it  is a natural constraint, i.e. every critical point $(u,v)\in\mathcal N_\beta$ for ${\mathcal J_\beta}|_{\mathcal N_\beta}$ satisfies $\mathcal J_\beta'(u,v)=0$.
\end{lemma}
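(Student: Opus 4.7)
The plan is the standard Lagrange-multiplier / implicit-function approach. Define the $C^1$ functions $G_1,G_2\colon H\to \R$ by
\[
G_1(u,v):=\|u\|_{\lambda_1}^2-\int_\Omega(u^4+\beta u^2v^2),\qquad G_2(u,v):=\|v\|_{\lambda_2}^2-\int_\Omega(v^4+\beta u^2v^2),
\]
so that $\mathcal N_\beta=\{(u,v)\in H : u,v\not\equiv 0,\ G_1(u,v)=G_2(u,v)=0\}$. Since the set $\{u\not\equiv0,\ v\not\equiv 0\}$ is open in $H$, it suffices to verify that $(G_1,G_2)\colon H\to \R^2$ is a submersion at every point of $\mathcal N_\beta$, that is, $G_1'(u,v)$ and $G_2'(u,v)$ are linearly independent. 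Once this is done, the preimage theorem yields the $C^1$-submanifold structure of codimension two.

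First, I would compute the four derivatives $\langle G_i'(u,v),(u,0)\rangle$ and $\langle G_i'(u,v),(0,v)\rangle$ and, using the defining relations of $\mathcal N_\beta$ to eliminate $\|u\|_{\lambda_1}^2$ and $\|v\|_{\lambda_2}^2$, obtain the $2\times 2$ matrix
\[
A(u,v)=-2\begin{pmatrix} \int_\Omega u^4 & \beta\int_\Omega u^2v^2 \\ \beta\int_\Omega u^2v^2 & \int_\Omega v^4 \end{pmatrix}.
\]
Showing $\det A(u,v)\neq 0$ is the heart of the argument and the main point where the hypothesis $\lambda_1,\lambda_2>0$ (together with $\beta<0$) enters. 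On $\mathcal N_\beta$ I would write the constraints as $\int u^4=\|u\|_{\lambda_1}^2+|\beta|\int u^2v^2$ and $\int v^4=\|v\|_{\lambda_2}^2+|\beta|\int u^2v^2$, so that, since $\lambda_1,\lambda_2>0$ and $u,v\not\equiv 0$ force $\|u\|_{\lambda_1}^2>0$ and $\|v\|_{\lambda_2}^2>0$, we obtain the strict inequalities
\[
\int_\Omega u^4>|\beta|\int_\Omega u^2v^2,\qquad \int_\Omega v^4>|\beta|\int_\Omega u^2v^2.
\]
Multiplying these gives $\int u^4\cdot\int v^4>\beta^2(\int u^2v^2)^2$, which proves $\det A(u,v)>0$. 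In particular, the vectors $(u,0),(0,v)$ are mapped by $\bigl(\langle G_1'(u,v),\cdot\rangle,\langle G_2'(u,v),\cdot\rangle\bigr)$ to two linearly independent vectors in $\R^2$, so $(G_1,G_2)$ is a submersion at $(u,v)$.

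For the natural constraint property, let $(u,v)\in\mathcal N_\beta$ be a critical point of ${\mathcal J_\beta}|_{\mathcal N_\beta}$. By the Lagrange multiplier rule there exist $\mu_1,\mu_2\in\R$ with
\[
\mathcal J_\beta'(u,v)=\mu_1 G_1'(u,v)+\mu_2 G_2'(u,v).
\]
Testing this identity with $(u,0)$ and then $(0,v)$, and using that $\langle\mathcal J_\beta'(u,v),(u,0)\rangle=G_1(u,v)=0$ and $\langle\mathcal J_\beta'(u,v),(0,v)\rangle=G_2(u,v)=0$, gives the homogeneous $2\times 2$ linear system
\[
A(u,v)\begin{pmatrix}\mu_1\\ \mu_2\end{pmatrix}=\begin{pmatrix}0\\0\end{pmatrix}.
\]
Since $\det A(u,v)\neq 0$ by the previous step, $\mu_1=\mu_2=0$, hence $\mathcal J_\beta'(u,v)=0$, as required. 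The main (and essentially only) obstacle is the non-degeneracy of $A(u,v)$, which is why the argument is localized to the regime $\lambda_1,\lambda_2>0$; outside it, the strict positivity of $\|u\|_{\lambda_i}^2$ used above need not hold.
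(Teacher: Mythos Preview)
Your proof is correct and follows essentially the same approach as the paper: the lemma is stated with a citation to \cite[Lemma 2.4]{tavares2012existence} and no proof is given, but the very same matrix $A(u,v)$ you write down appears as $\mathcal T(u,v)$ in the proof of the subsequent Lemma~\ref{Lemma 1.14}, where the identical estimate $\det\mathcal T(u,v)\ge 4\|u\|_{\lambda_1}^2\|v\|_{\lambda_2}^2>0$ is used. Your observation that the positivity relies on the standing assumption $\lambda_1,\lambda_2>0$ of Section~\ref{competitive lambda positive} is exactly right.
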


\begin{lemma}\label{Lemma 1.14}
Let $N\le4$ and let $\{(u_n,v_n)\}\subset\mathcal N_\beta$ be a Palais-Smale sequence of ${\mathcal{J}_\beta}_{|_{\mathcal{N}_\beta}}$ at level $m_\beta$. Then $\{(u_n,v_n)\}$ is a bounded $(PS)_{m_\beta}$-sequence for $\mathcal J_\beta$.
\end{lemma}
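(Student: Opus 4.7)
The proof splits naturally into two parts: establishing boundedness, and showing that the constrained Palais--Smale condition implies the unconstrained one via the vanishing of Lagrange multipliers.

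Boundedness is immediate. Since $(u_n,v_n)\in\mathcal N_\beta$, the identities defining the manifold give
\[
\mathcal J_\beta(u_n,v_n)=\frac14\bigl(\|u_n\|_{\lambda_1}^2+\|v_n\|_{\lambda_2}^2\bigr),
\]
and because $\lambda_1,\lambda_2>0$ the norms $\|\cdot\|_{\lambda_i}$ are equivalent to the standard $H^1$ norm. The convergence $\mathcal J_\beta(u_n,v_n)\to m_\beta$ therefore forces $\{(u_n,v_n)\}$ to be bounded in $H$.

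For the second part, write the manifold as $\mathcal N_\beta=\{G_1=G_2=0\}$ with $G_1(u,v)=\|u\|_{\lambda_1}^2-\int_\Omega(u^4+\beta u^2v^2)$ and $G_2(u,v)=\|v\|_{\lambda_2}^2-\int_\Omega(v^4+\beta u^2v^2)$. By Lemma \ref{Nehari Manifold}, $\mathcal N_\beta$ is a $C^1$ submanifold, so the Lagrange multiplier rule yields sequences $\mu_n,\nu_n\in\R$ with
\[
\mathcal J_\beta'(u_n,v_n)=\mu_n G_1'(u_n,v_n)+\nu_n G_2'(u_n,v_n)+o(1)\quad\text{in $H'$.}
\]
Testing this identity against $(u_n,0)$ and $(0,v_n)$ and using that $\langle\mathcal J_\beta'(u_n,v_n),(u_n,0)\rangle=G_1(u_n,v_n)=0$ and analogously for the second direction, a direct computation gives the linear system
\[
\begin{pmatrix} |u_n|_4^4 & \beta\int_\Omega u_n^2v_n^2 \\ \beta\int_\Omega u_n^2v_n^2 & |v_n|_4^4 \end{pmatrix}\begin{pmatrix}\mu_n\\ \nu_n\end{pmatrix}=o(1).
\]

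The crux is to bound the determinant away from zero. Using the Nehari identities $|u_n|_4^4=\|u_n\|_{\lambda_1}^2+|\beta|\int_\Omega u_n^2v_n^2$ and $|v_n|_4^4=\|v_n\|_{\lambda_2}^2+|\beta|\int_\Omega u_n^2v_n^2$ (valid because $\beta<0$), the determinant expands to
\[
\|u_n\|_{\lambda_1}^2\|v_n\|_{\lambda_2}^2+|\beta|\Bigl(\int_\Omega u_n^2v_n^2\Bigr)\bigl(\|u_n\|_{\lambda_1}^2+\|v_n\|_{\lambda_2}^2\bigr)\ \ge\ \|u_n\|_{\lambda_1}^2\|v_n\|_{\lambda_2}^2\ \ge\ C>0,
\]
where the last inequality uses Remark \ref{Nehari property 1}. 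I expect this determinant estimate to be the main (though not deep) obstacle, as it is the place where the sign of $\beta$ and the lower bounds on $\mathcal N_\beta$ interact. Once the determinant is uniformly positive, $(\mu_n,\nu_n)=o(1)$. Since the sequence is bounded in $H$, both $G_i'(u_n,v_n)$ are bounded in $H'$, and therefore $\mathcal J_\beta'(u_n,v_n)\to 0$ in $H'$, proving the claim.
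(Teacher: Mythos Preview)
Your proof is correct and follows essentially the same route as the paper: boundedness from coercivity on $\mathcal N_\beta$, then testing the Lagrange multiplier identity against $(u_n,0)$ and $(0,v_n)$ to obtain a $2\times 2$ linear system, and finally the same determinant bound $|u_n|_4^4|v_n|_4^4-\beta^2\bigl(\int_\Omega u_n^2v_n^2\bigr)^2\ge \|u_n\|_{\lambda_1}^2\|v_n\|_{\lambda_2}^2$ via the Nehari identities and Remark~\ref{Nehari property 1}. The only cosmetic difference is that the paper packages the conclusion through the uniform negative definiteness of the matrix $\mathcal T(u_n,v_n)$ rather than inverting directly, but the content is identical.
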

\begin{proof}
The definition of Palais-Smale at level $m_\beta$ implies that $\{(u_n,v_n)\}$ is a minimizing sequence and the boundedness follows by the coercivity of ${\mathcal{J}_\beta}_{|_{\mathcal{N}_\beta}}$.
    Since  $\{(u_n,v_n)\}$ is a Palais-Smale sequence in $\mathcal N_\beta$, we have that there exist $\{\Lambda_{1,n}\}, \{\Lambda_{2,n}\}\subset\R$ such that
    \begin{equation}\label{PS-N}
    \mathcal J_\beta'(u_n,v_n)-\Lambda_{1,n}\mathcal G_1'(u_n,v_n)-\Lambda_{2,n}\mathcal G_2'(u_n,v_n)=o(1)\ \ in\ H'.
    \end{equation}
    We want to prove that $\Lambda_{1,n},\Lambda_{2,n}=o(1)$.
    We define the matrix
    \begin{equation*}\label{matrix T}
      \mathcal T(u_n,v_n):=
    \begin{bmatrix}
        &\left\langle\mathcal G_1'(u_n,v_n), (u_n,0)\right\rangle &\left\langle\mathcal G_1'(u_n,v_n), (0,v_n)\right\rangle\\
        & &\\
        &\left\langle\mathcal G_2'(u_n,v_n), (u_n,0)\right\rangle &\left\langle\mathcal G_2'(u_n,v_n), (0,v_n)\right\rangle
    \end{bmatrix}
    =\begin{bmatrix}
        &-2\displaystyle{\int_{\Omega}u_n^4} &-2\beta\displaystyle{\int_{\Omega}u_n^2v_n^2}\\
        & &\\
        &-2\beta\displaystyle{\int_{\Omega}u_n^2v_n^2}&-2\displaystyle{\int_{\Omega}v_n^4}
    \end{bmatrix}.
    \end{equation*}
    
Notice that $\mathcal T(u_n,v_n)$satisfies
    \begin{align*}
        \det\mathcal T(u_n,v_n)&\ge4\|u_n\|_{\lambda_1}^2\cdot\|v_n\|_{\lambda_2}^2>0,
    \end{align*}
     and this bound is uniform in $n$ by Remark \ref{Nehari property 1}.
  Hence, there exists a uniform constant $c>0$ such that 
  \begin{equation}\label{def neg}
      \mathcal T(u_n,v_n)
      \begin{bmatrix}
          \Lambda_{1,n}\\
          \Lambda_{2,n}
      \end{bmatrix}
      \cdot
      \begin{bmatrix}
          \Lambda_{1,n}\\
          \Lambda_{2,n}
      \end{bmatrix}
      \le-c|(\Lambda_{1,n},\Lambda_{2,n})|^2.
  \end{equation} 
    By \eqref{PS-N} we get that
    \[
    \boldsymbol{o(1)}=\mathcal T(u_n,v_n)
    \begin{bmatrix}
        \Lambda_{1,n}\\
        \Lambda_{2,n}
    \end{bmatrix}.
    \]
    Exploiting equation \eqref{def neg}, we obtain \[o(|\Lambda_{1,n},\Lambda_{2,n})|)\le-c|(\Lambda_{1,n},\Lambda_{2,n})|^2.\] Thus,
    \[|(\Lambda_{1,n},\Lambda_{2,n})|\le o(1)\implies\Lambda_{1,n},\Lambda_{2,n}=o(1).\]
     We stress the fact that the boundedness of $\{(u_n,v_n)\}$ in $H$   implies the boundedness of $\mathcal G_1',\mathcal G_2'$.  Therefore, $\mathcal J_\beta'(u_n,v_n)=o(1)$ in $H'$.
\end{proof}

We are ready to discuss the $(PS)$-condition. If $N \leq 3$, it is immediate
 by reasoning as in Theorem \ref{PS condition beta>0}. Now, we focus on the case $N = 4$.

\begin{theorem}\label{Compactness beta<0}
   Let $N=4$, $\lambda_1,\lambda_2>0$. Let $\{(u_n,v_n)\}\subset\mathcal N_\beta$ be a bounded Palais-Smale sequence at level $m_\beta$ and assume that
   \begin{equation}\label{threshold}
m_\beta <     \min\left\{L_{\lambda_1} + \dfrac{S^2}{8}, L_{\lambda_2} + \dfrac{S^2}{8}\right\} 
\end{equation}
    holds, where $L_{\lambda_i}$ is the least energy of the scalar equation $-\Delta u+\lambda_iu=u^3$ in $H^1(\Omega)$. Then, there exists $(u,v)\in\mathcal N_\beta$ such that $\mathcal J_\beta(u,v)=m_\beta$ and $\mathcal J_\beta'(u,v)=0$.
\end{theorem}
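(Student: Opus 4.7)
The plan is to exploit that, by Lemma~\ref{Lemma 1.14}, the sequence $\{(u_n,v_n)\}$ is a bounded $(PS)_{m_\beta}$-sequence for $\mathcal J_\beta$ on the whole space $H$. Up to a subsequence $u_n \rightharpoonup u$, $v_n \rightharpoonup v$ in $H^1(\Omega)$ with strong convergence in $L^2(\Omega)$ by Rellich; passing to the limit in $\mathcal J_\beta'(u_n,v_n) = o(1)$ shows that $(u,v)$ is a weak solution of \eqref{Pb}. Setting $\theta_n := u_n - u$ and $\sigma_n := v_n - v$, the Brezis--Lieb arguments carried out to obtain \eqref{nehari equation+o(1)}--\eqref{eq. energia} in the proof of Theorem~\ref{PS condition beta>0} do not depend on the sign of $\beta$ and still yield
\[
|\nabla \theta_n|_2^2 = |\theta_n|_4^4 + \beta \int_\Omega \theta_n^2 \sigma_n^2 + o(1), \qquad |\nabla \sigma_n|_2^2 = |\sigma_n|_4^4 + \beta \int_\Omega \theta_n^2 \sigma_n^2 + o(1),
\]
together with the energy splitting
\[
m_\beta = \mathcal J_\beta(u,v) + \tfrac14(\xi_1 + \xi_2) + o(1), \qquad \xi_1 := \lim_n |\nabla \theta_n|_2^2,\ \ \xi_2 := \lim_n |\nabla \sigma_n|_2^2,
\]
where we pass to further subsequences to ensure existence of the limits. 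Since $\beta<0$, dropping the nonpositive cross term gives $|\nabla \theta_n|_2^2 \leq |\theta_n|_4^4 + o(1)$ and similarly for $\sigma_n$. Because $|\theta_n|_2, |\sigma_n|_2 \to 0$ by Rellich, Cherrier's inequality (Lemma~\ref{Cherrier}) applied to $\theta_n$ and $\sigma_n$ then yields, after letting $\varepsilon\to 0$, the standard Lions-type dichotomy
\[
\xi_i \in \{0\} \cup [S^2/2, +\infty) \qquad \text{for } i=1,2.
\]

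The crucial step is to show $u \not\equiv 0$ and $v \not\equiv 0$, and this is where the threshold \eqref{threshold} enters. Suppose first $v \equiv 0$. Since $\{(u_n,v_n)\} \subset \mathcal N_\beta$, Remark~\ref{Nehari property 1} gives $\|v_n\|_{\lambda_2} \geq C > 0$; combined with $|v_n|_2 \to 0$ this forces $\xi_2 = \lim_n |\nabla v_n|_2^2 > 0$, hence $\xi_2 \geq S^2/2$ by the dichotomy. If in addition $u \equiv 0$, the same argument applied to $u_n$ gives $\xi_1 \geq S^2/2$, so $m_\beta = (\xi_1+\xi_2)/4 \geq S^2/4$, contradicting \eqref{threshold} because $L_{\lambda_i} < S^2/8$ by \eqref{threshold scalar equation}. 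Otherwise $u \not\equiv 0$ is a nontrivial weak solution of \eqref{scalar eq} with $\lambda = \lambda_1$, so $\mathcal J_\beta(u,0) = \Psi_{\lambda_1}(u) \geq L_{\lambda_1}$ and the energy splitting yields $m_\beta \geq L_{\lambda_1} + S^2/8$, again contradicting \eqref{threshold}. The symmetric case $u \equiv 0$, $v \not\equiv 0$ is ruled out in the same way.

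Hence $u, v \not\equiv 0$, so $(u,v) \in \mathcal N_\beta$ and therefore $\mathcal J_\beta(u,v) \geq m_\beta$; combining this with the energy splitting and $\xi_i \geq 0$ forces $\xi_1 = \xi_2 = 0$. This gives strong convergence $(u_n,v_n) \to (u,v)$ in $H$, so $\mathcal J_\beta(u,v) = m_\beta$ and $\mathcal J_\beta'(u,v) = 0$. The main obstacle is the case analysis above: one must exclude the concentration scenario in which one component escapes to a Cherrier-type bubble (contributing at least $S^2/8$ to the energy) while the other collapses onto a nontrivial scalar ground state (contributing at least $L_{\lambda_i}$), and the threshold \eqref{threshold} is calibrated precisely to forbid this possibility.
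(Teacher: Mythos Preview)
Your proposal is correct and follows essentially the same approach as the paper: both arguments reuse the Brezis--Lieb splitting from Theorem~\ref{PS condition beta>0}, apply Cherrier's inequality (exploiting $\beta<0$ to drop the cross term) to obtain the dichotomy $\xi_i\in\{0\}\cup[S^2/2,\infty)$, and then carry out the same case analysis---ruling out $(u,v)=(0,0)$ via $m_\beta<S^2/4$ and the semi-trivial cases via the threshold $m_\beta<L_{\lambda_i}+S^2/8$---before concluding strong convergence from $\mathcal J_\beta(u,v)\ge m_\beta$. The only cosmetic difference is that you state the dichotomy once and then invoke it, whereas the paper re-derives the Cherrier estimate inside each case.
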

\begin{proof}
Since \(\{(u_n,v_n)\}\) is bounded, we can assume that \(u_n \rightharpoonup u\), \(v_n \rightharpoonup v\) in \(H^1(\Omega)\), with strong convergence in \(L^q(\Omega)\) for \(q \in [1,4)\) and weak convergence for \(q = 4\).
We denote $\theta_n:=u_n-u,\sigma_n:=v_n-v$ and we can repeat the proof of Theorem \ref{PS condition beta>0} to obtain \eqref{Jb'=0 eq1}-\eqref{eq. energia}. Passing to a subsequence, we may also assume that  the following limits are finite:
\[
\xi_1 := \lim_{n \to +\infty} \int_{\Omega} |\nabla \theta_n|^2, \quad \xi_2 := \lim_{n \to +\infty} \int_{\Omega} |\nabla \sigma_n|^2.
\]  
Finally, we are ready to prove that \(u \not\equiv 0\) and \(v \not\equiv 0\). This part of the proof is inspired by \cite[Proof of Theorem 1.3, pages 539-540]{chen2012positive}, but we exploit Cherrier's inequality instead of the Sobolev inequality.\\
\textbf{Step 1}. Assume that $(u,v)=(0,0)$.  Then $\theta_n=u_n, \sigma_n=v_n$. Since  $(u_n,v_n)\in\mathcal N_\beta$, $\beta<0$ and by Cherrier's inequality, see Lemma \ref{Cherrier},  we have that
\[|\nabla u_n|_2+o(1)\le|u_n|_4^2\le\left(\frac{\sqrt2}{S}+\varepsilon\right)|\nabla u_n|_2^2+o(1),\quad |\nabla v_n|_2+o(1)\le|v_n|_4^2\le\left(\frac{\sqrt2}{S}+\varepsilon\right)|\nabla v_n|_2^2+o(1).\]
Hence, $\xi_1+\xi_2\ge S^2$. Thus, $4m_\beta\ge\xi_1+\xi_2\ge S^2$ 
and this is a contradiction with  \eqref{threshold}. Indeed, since $L_{\lambda_i}<\frac{S^2}{8}$, see section \eqref{preliminaries}, we have $m_\beta<\frac{S^2}{4}$ and $4m_\beta< S^2$.\\
\textbf{Step 2}. Assume that $u\not\equiv0, v\equiv0$. Thus, $u$ is a solution for $-\Delta u+\lambda_1u=u^3$ in $\Omega$ with $\frac{\partial u}{\partial\nu}=0$ on $\partial\Omega$ and $\mathcal J_\beta(u,0)\ge L_{\lambda_1}$.
Furthermore, 
\[\int_{\Omega}|\nabla v_n|^2 + o(1) =\int_{\Omega}v_n^4+\beta\int_{\Omega}\theta_n^2v_n^2\le\left(\frac{\sqrt2}{S}+\varepsilon\right)^2|\nabla v_n|_2^4+o(1).\]
Therefore, $\xi_2\ge\frac{S^2}{2}$ and this implies that
\[m_\beta=\mathcal J_\beta(u,0)+\frac14\xi_2\ge L_{\lambda_1}+\frac{S^2}{8}.
\]
 We obtain a contradiction with \eqref{threshold}. The case $u\equiv0,v\not\equiv0$ is similar. Since $(u,v)\in\mathcal N_\beta$, by \eqref{eq. energia} we infer that
 \begin{align*}
     m_\beta+o(1)&=\mathcal J_\beta(u_n,v_n)\ge m_\beta+\frac14\int_\Omega(|\nabla \theta_n|^2+|\nabla \sigma_n|^2)+o(1).
 \end{align*}
 Then, the convergence is strong and $\mathcal J_\beta(u,v)=m_\beta$.
\end{proof}

We want to show that \eqref{threshold} holds. Let $u_1\in H^1(\Omega)\cap L^\infty(\Omega)$ be a  solution of $-\Delta u+\lambda_1u=u^3$ in $H^1(\Omega)$ and let $w_\varepsilon=U_{\varepsilon,0}(|x|)\eta(|x|)$ where $U_{\varepsilon,0}=\frac{(8\varepsilon)^{1/2}}{\varepsilon+|x|^2}$. In the Dirichlet case, the boundary condition \( u \equiv 0 \) on \( \partial\Omega \) allows us to select \(\text{supp}(\eta)\Subset \Omega\) (close to the boundary) in such a way that \( |u_1|_\infty \) remains sufficiently small; see \cite[pages 532-534]{chen2012positive}.  This strategy does not work for the Neumann case, as $u_1$ is not zero on $\partial \Omega$; moreover, we have to concentrate $U_{\varepsilon,0}$ at a boundary point with positive curvature, and do not have enough freedom to move the support of $\eta$ to the interior of $\Omega$.
\begin{proposition}
Let $N=4$. The condition \eqref{threshold} holds.
\end{proposition}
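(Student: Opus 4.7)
The plan is to test the level $m_\beta$ with a pair of the form $(t_1u_1, t_2 w_\varepsilon)$ projected onto $\mathcal N_\beta$ (and symmetrically $(t_1 w_\varepsilon, t_2 u_2)$ for the other estimate), where $u_i\in H^1(\Omega)\cap L^\infty(\Omega)$ is a least energy solution of the scalar equation at level $L_{\lambda_i}$, existing by Lemma~\ref{ground state scalar} and regular by Proposition~\ref{regularity}, and $w_\varepsilon$ is the truncated bubble concentrated at the boundary point $0$ of positive curvature. The strategy is standard in spirit, but unlike the Dirichlet case of \cite{chen2012positive}, we cannot shrink $\text{supp}(\eta)$ away from where $u_1$ is large, since the concentration point of $w_\varepsilon$ must stay on $\partial\Omega$. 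The key observation is that the $L^\infty$ bound on $u_1$ and the estimate $|w_\varepsilon|_2^2=O(\varepsilon|\log\varepsilon|)$ from Lemma~\ref{Estimates for C} already force the interaction term to be negligible.

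\textbf{Step 1 (projection onto $\mathcal N_\beta$).} Setting $A:=\int_\Omega u_1^4$, $B:=\int_\Omega w_\varepsilon^4$, $D:=\int_\Omega u_1^2 w_\varepsilon^2$, $\alpha:=\|u_1\|_{\lambda_1}^2$, $\gamma:=\|w_\varepsilon\|_{\lambda_2}^2$, one requires $\tau_i=t_i^2>0$ to solve
\[
A\tau_1 + \beta D\tau_2 = \alpha,\qquad B\tau_2 + \beta D\tau_1 = \gamma.
\]
Since $u_1$ is a solution, $\alpha=A=4L_{\lambda_1}$. Because $u_1\in L^\infty(\Omega)$, $D\le |u_1|_\infty^2\,|w_\varepsilon|_2^2=O(\varepsilon|\log\varepsilon|)$. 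Hence the determinant $AB-\beta^2D^2$ is strictly positive for $\varepsilon$ small, and both $\tau_1,\tau_2$ are positive (the numerators $\alpha B-\beta D\gamma$ and $A\gamma-\beta D\alpha$ are sums of positive terms since $\beta<0$). Thus the projected pair $(t_1^\ast u_1,t_2^\ast w_\varepsilon)$ belongs to $\mathcal N_\beta$.

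\textbf{Step 2 (energy identity and expansion).} Using the two Nehari identities to eliminate quartic terms, a short algebraic manipulation gives
\[
\mathcal J_\beta(t_1^\ast u_1, t_2^\ast w_\varepsilon) = \tfrac14\bigl(\tau_1\alpha+\tau_2\gamma\bigr).
\]
Writing $\tau_1\alpha=A-\beta D\tau_2$ and $\tau_2=\tfrac{A(\gamma-\beta D)}{AB-\beta^2 D^2}$, and expanding to leading order in $D$, one finds
\[
\tau_1\alpha+\tau_2\gamma = A + \frac{\gamma^2}{B} + O(D),
\]
with $O(D)=O(\varepsilon|\log\varepsilon|)$ (the linear term in $D$ is positive but small). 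Invoking Lemma~\ref{Estimates for C},
\[
\gamma = \tfrac{S^2}{2}-C_1\sqrt\varepsilon+O(\varepsilon|\log\varepsilon|),\qquad B=\tfrac{S^2}{2}-C_2\sqrt\varepsilon+O(\varepsilon),
\]
with $C_1>C_2>0$, and a direct expansion yields
\[
\frac{\gamma^2}{4B} = \frac{S^2}{8} - \frac{2C_1-C_2}{4}\sqrt\varepsilon + o(\sqrt\varepsilon).
\]
Combining these,
\[
\mathcal J_\beta(t_1^\ast u_1,t_2^\ast w_\varepsilon) \le L_{\lambda_1} + \frac{S^2}{8} - \frac{2C_1-C_2}{4}\sqrt\varepsilon + O(\varepsilon|\log\varepsilon|) < L_{\lambda_1}+\frac{S^2}{8},
\]
for $\varepsilon>0$ small enough, since $2C_1-C_2>0$ dominates the $O(\varepsilon|\log\varepsilon|)$ term.

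\textbf{Step 3 (symmetry and conclusion).} Repeating the same argument with the test pair $(t_1 w_\varepsilon, t_2 u_2)$, where $u_2$ is a least energy solution of the scalar equation at level $L_{\lambda_2}$, gives the corresponding bound $m_\beta<L_{\lambda_2}+\tfrac{S^2}{8}$. Taking the minimum yields \eqref{threshold}. The main subtlety, absent in the Dirichlet setting, is to ensure that the unavoidable boundary interaction $\int u_i^2 w_\varepsilon^2$ is of strictly lower order than the $\sqrt\varepsilon$ gain obtained from concentrating $w_\varepsilon$ at a point of positive mean curvature; this is supplied by combining the $L^\infty$ regularity of $u_i$ with the $|w_\varepsilon|_2^2=O(\varepsilon|\log\varepsilon|)$ estimate.
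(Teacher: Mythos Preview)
Your proof is correct and follows essentially the same approach as the paper: you use the identical test pair $(t_1 u_1, t_2 w_\varepsilon)\in\mathcal N_\beta$ and the key estimate $\int_\Omega u_1^2 w_\varepsilon^2 \le |u_1|_\infty^2 |w_\varepsilon|_2^2 = O(\varepsilon|\log\varepsilon|)$, which renders the interaction term negligible compared to the $\sqrt\varepsilon$ gain from the boundary curvature. The paper carries out the expansion by substituting the explicit formulas for $t^2,s^2$ and the asymptotics from Lemma~\ref{Estimates for C} directly, while you organize the same computation via the abstract quantities $A,B,D,\alpha,\gamma$; the resulting energy estimate and conclusion are the same.
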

\begin{proof}
Let $u_1$ be a  least energy solution of $-\Delta u+\lambda_1u=u^3$ in $H^1(\Omega)$  and let $w_\varepsilon=U_{\varepsilon,0}(|x|)\eta(|x|)$.  We observe that Lemma \ref{Estimates for C} yields the estimates  
\[
|w_\varepsilon|_2^2 = O(\varepsilon|\log\varepsilon|),\qquad
|w_\varepsilon|_4^4 = \frac{S^2}{2} - C_2\sqrt\varepsilon + O(\varepsilon).
\]
This, in turn, implies that
    \begin{align*}
        |u_1|_4^4\cdot| w_\varepsilon|_4^4-\beta^2|u_1 w_\varepsilon|_2^4&\ge4L_{\lambda_1}\cdot| w_\varepsilon|_4^4-\beta^2|u_1^2|_{\infty}^2\cdot| w_\varepsilon|_2^4>0
    \end{align*}
    for $\varepsilon>0$ small enough. Then
\begin{equation}\label{eq t}
t^2 = \frac{\|u_1\|_{\lambda_1}^2 \cdot |w_\varepsilon|_4^4 - \beta |u_1w_\varepsilon |_2^2 \cdot \|w_\varepsilon\|_{\lambda_2}^2}{|u_1|_4^4 \cdot |w_\varepsilon|_4^4 - \beta^2 |u_1w_\varepsilon|_2^4},\quad s^2 = \frac{|u_1|_4^4 \cdot \|w_\varepsilon\|_{\lambda_2}^2 - \beta \|u_1\|_{\lambda_1}^2 \cdot |u_1w_\varepsilon|_2^2}{|u_1|_4^4 \cdot |w_\varepsilon|_4^4 - \beta^2 |u_1w_\varepsilon|_2^4}
\end{equation}  
are well-defined, and  $(tu_1,s w_\varepsilon)\in\mathcal N_\beta$.

    We compute
     \begin{align*}
        \mathcal J_\beta(tu_1,s w_\varepsilon)&=\frac14\left[\frac{4L_{\lambda_1}| w_\varepsilon|_4^4-\beta|u_1 w_\varepsilon|_2^2\cdot\| w_\varepsilon\|_{\lambda_2}^2}{4L_{\lambda_1}| w_\varepsilon|_4^4-\beta^2|u_1 w_\varepsilon|_2^4}4L_{\lambda_1}+\frac{4L_{\lambda_1}\| w_\varepsilon\|_{\lambda_2}^2-\beta|u_1 w_\varepsilon|_2^2\cdot4L_{\lambda_1}}{4L_{\lambda_1}| w_\varepsilon|_4^4-\beta^2|u_1 w_\varepsilon|_2^4}\| w_\varepsilon\|_{\lambda_2}^2\right]\\
        &=\frac14\biggl[\frac{4L_{\lambda_1}\left(\frac{S^2}{2}-C_2\sqrt\varepsilon+O(\varepsilon)\right)+O(\varepsilon|\log\varepsilon|)}{4L_{\lambda_1}\left(\frac{S^2}{2}-C_2\sqrt\varepsilon+O(\varepsilon)\right)+O(\varepsilon^2\log^2\varepsilon)}4L_{\lambda_1}\\
        &\quad+\frac{4L_{\lambda_1}\left(\frac{S^2}{2}-C_1\sqrt\varepsilon+O(\varepsilon|\log\varepsilon|)\right)+O(\varepsilon|\log\varepsilon|)}{4L_{\lambda_1}\left(\frac{S^2}{2}-C_2\sqrt\varepsilon+O(\varepsilon)\right)-O(\varepsilon^2\log^2\varepsilon)}\left(\frac{S^2}{2}-C_1\sqrt\varepsilon+O(\varepsilon)+O(\varepsilon|\log\varepsilon|)\right)\biggr]\\
        &=\frac14\biggl[ 4L_{\lambda_1}\left(\frac{S^2}{2}-C_2\sqrt\varepsilon+o(\sqrt\varepsilon)\right)\left(\frac{2}{S^2}+\frac{4}{S^4}C_2\sqrt\varepsilon+o(\sqrt\varepsilon)\right)\\
        &\quad+\left(\frac{S^2}{2}-C_1\sqrt\varepsilon+o(\sqrt\varepsilon)\right)
        \left(\frac{2}{S^2}+\frac{4}{S^4}C_2\sqrt\varepsilon+o(\sqrt\varepsilon)\right)\left(\frac{S^2}{2}-C_1\sqrt\varepsilon+o(\sqrt\varepsilon)\right)
        \biggr]\\
        &=\frac14\left[4L_{\lambda_1}(1+o(\sqrt\varepsilon))+\left(1-\frac{2}{S^2}(C_2-C_1)\sqrt\varepsilon+o(\sqrt\varepsilon)\right)\left(\frac{S^2}{2}-C_1\sqrt\varepsilon+o(\sqrt\varepsilon)\right)\right]\\
        &=\frac14\left[4L_{\lambda_1}+\frac{S^2}{2}-C\sqrt\varepsilon+o(\sqrt\varepsilon)\right],
    \end{align*}
for some $C>0$.
    Thus, $\mathcal J_\beta(tw_\varepsilon,su_1)<L_{\lambda_1}+\frac{S^2}{8}$, for $\varepsilon>0$ small enough.
     In a similar way, we get $\mathcal J_\beta(tu_2,sw_\varepsilon)<L_{\lambda_2}+\frac{S^2}{8}$ and
     \[m_\beta<\min\left\{L_{\lambda_1}+\frac{S^2}{8},L_{\lambda_2}+\frac{S^2}{8}\right\}.\qedhere\]  
\end{proof}

\begin{proof}[Proof of Theorem \ref{Theorem 1.2}]
    Let \( \{(u_n, v_n)\} \subset \mathcal{N}_\beta \) be a minimizing sequence for \( m_\beta \), i.e., \( \mathcal{J}_\beta(u_n, v_n) = m_\beta + o(1) \). Then \( \{(|u_n|, |v_n|)\} \) is a minimizing sequence for \( m_\beta \) by the definitions of \( m_\beta \) and \( \mathcal{N}_\beta \). By Ekeland's Variational Principle, see \cite{ekeland1974variational}, we can assume that \( \{(|u_n|, |v_n|)\} \) is a Palais-Smale sequence at level \( m_\beta \) for \( {\mathcal J_\beta}|_{\mathcal N_\beta} \). If \( N = 4 \), by Theorem \ref{Compactness beta<0} and Proposition \ref{threshold} there exists \( (u, v) \) such that \( \mathcal{J}_\beta(u, v) = m_\beta \), \( \mathcal{J}_\beta'(u, v) = 0 \), and \( u, v \not\equiv 0 \). Since any critical point of $\mathcal J_\beta$ belongs to $\mathcal N_\beta$, we have that $m_\beta=l_\beta$ and $(u,v)$ is a least energy solution of \eqref{Pb}.  The Strong Maximum Principle implies that \( u, v > 0 \). The same conclusion holds for \( N \leq 3 \) by  \eqref{compactness subcritical}. 
    
    It remains to prove that  the least energy solutions are not constant under the stated assumptions. Indeed, if the solution is a constant $(c_1,c_2)\in\R^2$, then it has the shape \eqref{costanti non banali}. This is not well defined when $\beta\leq -1$; if instead $\beta\in (-1,0)$, then
    \[m_\beta=\mathcal J_\beta(c_1,c_2)=\frac{\lambda_1^2+\lambda_2^2-2\beta\lambda_1\lambda_2}{4(1-\beta^2)}.\]
Let $u_i\in H^1(\Omega)$ be a least energy solution of $-\Delta u_i+\lambda_iu_i=u_i^3$ in $\Omega$, for $i=1,2$.
According to \eqref{stima ground state scalare},
we have 
\begin{align*}
m_\beta&\le\mathcal J_\beta(tu_1,su_2)=
\frac14\left[\frac{4 L_{\lambda_1}4L_{\lambda_2}-\beta 4L_{\lambda_2}|u_1u_2|_2^2}{4L_{\lambda_1}4 L_{\lambda_2}-\beta^2|u_1u_2|_2^4}4L_{\lambda_1}+\frac{4 L_{\lambda_1}4L_{\lambda_2}-\beta 4L_{\lambda_1}|u_1u_2|_2^2}{4L_{\lambda_1}4 L_{\lambda_2}-\beta^2|u_1u_2|_2^4}4L_{\lambda_2} \right]\\
&\le\frac14\left[\frac{16 L_{\lambda_1}L_{\lambda_2}-4\beta L_{\lambda_2}\sqrt{16 L_{\lambda_1}L_{\lambda_2}}}{16L_{\lambda_1} L_{\lambda_2}-16\beta^2 L_{\lambda_1}L_{\lambda_2}}4L_{\lambda_1}+\frac{16 L_{\lambda_1}L_{\lambda_2}-4\beta L_{\lambda_1}\sqrt{16 L_{\lambda_1}L_{\lambda_2}}}{16L_{\lambda_1} L_{\lambda_2}-16\beta^2 L_{\lambda_1}L_{\lambda_2}}4L_{\lambda_2}\right]\\
&= \frac{L_{\lambda_1}+L_{\lambda_2}-2\beta\sqrt{L_{\lambda_1}L_{\lambda_2}}}{4(1-\beta^2)}\le  M\frac{\lambda_1^{\frac{4-N}{2}}+\lambda_2^{\frac{4-N}{2}}-2\beta\lambda_1^{\frac{4-N}{4}}\lambda_2^{\frac{4-N}{4}}}{4(1-\beta^2)}, 
\end{align*}
where $t,s$  are as in \eqref{eq t} (with $u_2$ instead of $w_\varepsilon$) and are well-defined by H\"older's inequality and the fact that $\beta\in(-1,0)$.

Thus,
\[M\frac{\lambda_1^{\frac{4-N}{2}}+\lambda_2^{\frac{4-N}{2}}-2\beta\lambda_1^{\frac{4-N}{4}}\lambda_2^{\frac{4-N}{4}}}{4(1-\beta^2)}\ge\frac{\lambda_1^2+\lambda_2^2+2|\beta|\lambda_1\lambda_2}{4(1-\beta^2)},\]
and we get a contradiction with the hypothesis.
\end{proof}
\section{Strongly competitive case for \texorpdfstring{$\lambda_1>0$}{lambda1>0} and \texorpdfstring{$\lambda_2=0$}{lambda2>0}}\label{competitive new}
Here, we treat the case  $\lambda:=\lambda_1>0$, $\lambda_2=0=\mu_1^N(\Omega)$, and $\beta<0$ with $|\beta|$ large enough, proving Theorem \ref{main result 4}. 

Observe that $\tilde H_1=\{0\}$, while $H_2^-=\{0\}$ and $H_2^0=\text{span} \{1\}$. If we integrate the second equation of \eqref{Pb}, we obtain
\[
\int_{\Omega}v^3+\beta\int_\Omega u^2v=0\iff
\langle \mathcal{J}'_\beta(u,v),(0,1)\rangle=0.
\]
Then weak solutions of \eqref{Pb} belong to the set
\[Y_\beta=\left\{(u,v)\in\ H\ :\ \int_{\Omega}v^3+\beta\int_\Omega u^2v=0\right\}\]
and the Nehari-type set introduced in in Sections \ref{weakly cooperative case} corresponds to
\[\tildeNb=\left\{(u,v)\in Y_\beta\ :\ u\not\equiv0,\ v\not\equiv 0,\ \|u\|_{\lambda}^2=\int_{\Omega}u^4+\beta \int_\Omega u^2v^2,\  \int_{\Omega}|\nabla v|^2=\int_{\Omega}v^4+\beta \int_\Omega u^2v^2\right\}.\]
As in Section \ref{weakly cooperative case} (for the weakly cooperative case) and Section \ref{competitive lambda positive} (competitive case with $\lambda_1,\lambda_2>0$), we work with the level $m_\beta:=\inf_{\tildeNb}\mathcal J_\beta$ (although the computations in the setting of this section are different).

First of all, we see that:
\begin{equation}\label{stima basso}
    \|u\|_{\lambda}^2\le\|u\|_{\lambda}^2-\beta \int_{\Omega}u^2v^2=\int_{\Omega}u^4\le C_{S,\lambda}^{-2}\|u\|_{\lambda}^4,\quad \forall\ (u,v)\in\mathcal N_\beta.
\end{equation}
 Then, $\|u\|_{\lambda}\ge C_1>0$, where $C_1=C_{S,\lambda}$ and $C_{S,\lambda}$ is the best constant of the embedding $(H^1,\|\cdot\|_\lambda)\hookrightarrow L^4$. In particular, $m_\beta>0$.

\begin{lemma}\label{coercività sezione 5}
Let $N\le4$. Then 
\[
 \mathcal J_\beta(u,v)=\frac14(\|u\|_\lambda^2+|\nabla v|_2^2) \quad \text{on $\mathcal N_\beta$},
\]
and there exists $\kappa_1=\kappa_1(\lambda,\beta),\kappa_2=\kappa_2(\lambda)$ such that
\[
\kappa_1\|(u,v)\|_H^2 \le  \|u\|_\lambda^2+|\nabla v|_2^2
    \le \kappa_2 \|(u,v)\|_H^2  \text{ for every } (u,v)\in Y_\beta.
\]
In particular, ${\mathcal J_\beta}|_{\mathcal N_\beta}$ is coercive.
\end{lemma}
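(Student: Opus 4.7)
The plan is to prove the three statements in sequence, with the energy-identity being immediate, the upper norm comparison trivial, and the lower norm comparison being the substantive step.

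For the identity $\mathcal{J}_\beta(u,v)=\tfrac14(\|u\|_\lambda^2+|\nabla v|_2^2)$ on $\mathcal{N}_\beta$, I would simply add the two Nehari conditions defining $\mathcal{N}_\beta$, getting
\[
\|u\|_\lambda^2+|\nabla v|_2^2=\int_\Omega(u^4+2\beta u^2v^2+v^4),
\]
and substitute into the definition of $\mathcal{J}_\beta$.

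For the comparison of norms on $Y_\beta$, the upper bound $\|u\|_\lambda^2+|\nabla v|_2^2\le\kappa_2\|(u,v)\|_H^2$ is immediate with $\kappa_2:=\max\{1,\lambda\}$, since $|\nabla v|_2^2\le \|v\|^2$ and $\|u\|_\lambda^2\le \max\{1,\lambda\}\|u\|^2$. The lower bound is the heart of the lemma, because the $\|\cdot\|_\lambda$-norm controls $\|u\|$ (since $\lambda>0$), but $|\nabla v|_2$ alone does not control $\|v\|$: one must use the constraint $\int_\Omega v^3+\beta\int_\Omega u^2v=0$ to rule out nonzero constants for $v$. I would argue by contradiction, supposing there is a sequence $(u_n,v_n)\in Y_\beta$ with $\|(u_n,v_n)\|_H=1$ and $\|u_n\|_\lambda^2+|\nabla v_n|_2^2\to 0$. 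Then $u_n\to 0$ strongly in $H^1(\Omega)$ and $|\nabla v_n|_2\to 0$. Writing $v_n=\bar{v}_n+v_n^\perp$ with $\bar{v}_n:=\frac{1}{|\Omega|}\int_\Omega v_n$, the Poincaré--Wirtinger inequality yields $\|v_n^\perp\|\le C|\nabla v_n|_2\to 0$, and the condition $\|v_n\|^2=1-\|u_n\|^2\to 1$ forces $|\bar{v}_n|^2|\Omega|\to 1$, so $\bar{v}_n\to \bar v_\infty=\pm|\Omega|^{-1/2}\ne 0$.

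The contradiction then comes from the constraint: using $u_n\to 0$ in $L^4$ and Hölder,
\[
\Bigl|\beta\int_\Omega u_n^2v_n\Bigr|\le |\beta||u_n|_4^2|v_n|_2\to 0,
\]
while expanding $v_n^3=(\bar{v}_n+v_n^\perp)^3$ and using $\int_\Omega v_n^\perp=0$ together with the convergence $v_n^\perp\to 0$ in $L^3(\Omega)$ (valid for $N\le 4$ by Rellich, since $3<2^*$) gives $\int_\Omega v_n^3\to |\Omega|\bar v_\infty^3\ne 0$, contradicting $(u_n,v_n)\in Y_\beta$. This establishes the lower bound with some $\kappa_1=\kappa_1(\lambda,\beta)>0$, and combined with part~(i) yields the coercivity of $\mathcal{J}_\beta|_{\mathcal{N}_\beta}$ since $\mathcal{N}_\beta\subset Y_\beta$.

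The main obstacle is precisely the lower bound on $v$: since $\lambda_2=0$, the quadratic part of the energy in $v$ degenerates on constants, and one must leverage the integral constraint in $Y_\beta$ to kill this degeneracy; handling the cubic term $\int_\Omega v^3$ cleanly requires the Poincaré--Wirtinger decomposition and the compact embedding $H^1(\Omega)\hookrightarrow L^3(\Omega)$ valid in all dimensions $N\le 4$.
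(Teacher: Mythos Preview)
Your proof is correct and follows essentially the same strategy as the paper: the energy identity and upper bound are handled identically, and the lower bound is obtained by a contradiction argument showing that a normalized sequence in $Y_\beta$ along which $\|u\|_\lambda^2+|\nabla v|_2^2\to 0$ would converge to a pair $(0,c)$ with $c\neq 0$, which violates the constraint $\int_\Omega v^3+\beta\int_\Omega u^2v=0$.

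The only minor difference is in presentation: the paper normalizes the minimizing sequence in $L^2\times L^2$ and phrases the argument as showing that the infimum of the Rayleigh-type quotient $\frac{\|u\|_\lambda^2+|\nabla v|_2^2}{|u|_2^2+|v|_2^2}$ over $Y_\beta\setminus\{(0,0)\}$ is positive, whereas you normalize in $H$ and carry out the Poincar\'e--Wirtinger decomposition and the cubic expansion of $\int_\Omega v_n^3$ explicitly. Your version is slightly more detailed but the underlying idea is the same.
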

\begin{proof}
We divide the proof in two steps.\\
\textbf{Step 1}:
For any $(u,v)\in Y_\beta$ there exists $C_\beta>0$ such that
\[|u|_2^2+|v|_2^2\le C_\beta(\|u\|_\lambda^2+|\nabla v|_2^2).\]
We consider the minimizing problem 
\[C_\beta^{-1}:=\inf_{(u,v)\in Y_\beta\setminus\{(0,0)\}}\frac{\|u\|_\lambda^2+|\nabla v|_2^2}{|u|_2^2+|v|_2^2}.\]
Let $\{(u_n,v_n)\}$ be a minimizing sequence such that $|(u_n,v_n)|_2=1$.\\ We have that $(u_n,v_n)\wto (u,v)$ in $H$ and $(u,v)\in Y_\beta$, by the strong convergence in $L^p(\Omega)$ for $p\in[1,4)$. Assume that $(u,v)=(0,c)$, for some $c\in\R$. Then, $c=0$, since $(u,v)\in Y_\beta$, and \[1=\int_\Omega(u^2+v^2)=c^2|\Omega|=0, \] a contradiction. Thus $C_\beta^{-1}>0$ and the assertion follows by definition of infimum.\\
\textbf{Step 2}: Coercivity.
 By  the previous step, for every $(u,v)\in\tildeNb$ we have the following estimate:
    \begin{equation*}\label{norma equivalente}
        \begin{aligned}
             \|u\|_\lambda^2+|\nabla v|_2^2&\le \max\{1,\lambda\}\|(u,v)\|_H^2\\
    &\le\max\{1,\lambda\}(1+C_\beta) (\|u\|_\lambda^2+|\nabla v|_2^2).
        \end{aligned}
    \end{equation*}
    Notice that, for every $(u,v)\in\mathcal N_\beta$,
    \[
    \mathcal J_\beta(u,v)=\frac14(\|u\|_\lambda^2+|\nabla v|_2^2)\ge\frac{\kappa_1}{4}\|(u,v)\|^2,
    \]
the coercivity follows.
\end{proof}

\begin{proposition}\label{chiusura sezione 5}
 Let $N\le4$.   The Nehari $\tildeNb$ is a closed set with respect to the strong topology of $H$.
\end{proposition}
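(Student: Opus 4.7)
The plan is to take a sequence $\{(u_n,v_n)\}\subset\tildeNb$ converging strongly in $H$ to some $(u,v)$, and check each defining condition of $\tildeNb$ for $(u,v)$. The three scalar identities (the two Nehari-type equalities and the $Y_\beta$ constraint) will pass to the limit for free, since the integrals $\int u^p v^q$ with $p+q\le 4$ are continuous with respect to $H^1$-convergence via Sobolev, while $\|u\|_\lambda^2$ and $|\nabla v|_2^2$ depend continuously on the $H^1$-norm. The condition $u\not\equiv 0$ will follow at once from \eqref{stima basso}: the uniform lower bound $\|u_n\|_\lambda\ge C_1>0$ is preserved under strong convergence.

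The genuine difficulty, and the step where the specific structure of this setting is used, lies in ruling out $v\equiv 0$. My plan is to argue by contradiction and rescale: assuming $v_n\to 0$ in $H^1$, I will set $t_n:=\|v_n\|>0$ (strictly positive since $v_n\not\equiv 0$) and $\tilde v_n:=v_n/t_n$, so that $\|\tilde v_n\|=1$. Dividing the second Nehari identity by $t_n^2$ then yields
\[
|\nabla\tilde v_n|_2^2 + |\beta|\int_\Omega u_n^2\,\tilde v_n^2 \;=\; t_n^2|\tilde v_n|_4^4 \;\longrightarrow\; 0,
\]
because $|\tilde v_n|_4$ is bounded by Sobolev while $t_n\to 0$. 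Therefore $|\nabla\tilde v_n|_2\to 0$ and, combined with $\|\tilde v_n\|=1$, also $|\tilde v_n|_2\to 1$. Rellich's theorem provides, along a subsequence, strong $L^2$-convergence $\tilde v_n\to\tilde v$, and weak lower semicontinuity of the gradient forces $\tilde v$ to be a constant; the $L^2$-normalization gives $\tilde v\equiv c$ with $c=\pm|\Omega|^{-1/2}\ne 0$.

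To close the argument, the idea is to exploit the $Y_\beta$ constraint itself. Dividing $\int_\Omega v_n^3=|\beta|\int_\Omega u_n^2\,v_n$ by $t_n$ produces
\[
\int_\Omega \tilde v_n\,v_n^2 \;=\; |\beta|\int_\Omega u_n^2\,\tilde v_n.
\]
Cauchy--Schwarz will bound the left-hand side by $|\tilde v_n|_2\,|v_n|_4^2\to 0$, whereas the right-hand side converges to $|\beta|\,c\,|u|_2^2\ne 0$, since $u\not\equiv 0$ forces $|u|_2>0$ and $c\ne 0$. This is the desired contradiction, and it will yield $v\not\equiv 0$. I expect this rescaling step to be the main subtlety: it is precisely the constraint $Y_\beta$, which enters the problem only because $\lambda_2=0$ places the constants in the kernel of the $v$-quadratic form, that prevents the sequence from collapsing onto the semi-trivial pair $(u,0)$.
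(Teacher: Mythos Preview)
Your proof is correct and follows essentially the same strategy as the paper: both rescale $v_n$ by its $H^1$-norm and exploit the second Nehari identity together with the $Y_\beta$ constraint to rule out $v\equiv 0$. Your ordering is slightly cleaner---by using the Nehari identity first to force $\tilde v_n\to c\neq 0$ a constant, and only then invoking the $Y_\beta$ relation divided by $t_n$, you avoid the case split (on whether the weak limit of the rescaled sequence vanishes) that the paper performs, but the ingredients are identical.
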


\begin{proof}
Let $\{(u_n,v_n)\}\subset\mathcal N_\beta$ such that $u_n\to u$ and $v_n\to v$ in $H^1(\Omega)$.
   From \eqref{stima basso} we have that $u\not\equiv0$. Now, assume that $v_n\to0$. Let us define  $w_n=v_n/\|v_n\|$, which is weakly convergent to some $w$. Since 
\[
\int_{\Omega}(v_n^3+\beta u_n^2v_n)=0,\ \text{ we obtain that } a_n:=\int_{\Omega}w_n^3+\frac{\beta}{\|v_n\|^2}\int_{\Omega}u_n^2w_n=0.
\]
If $w\not\equiv0$, we get a contradiction passing to the limit as $n\to+\infty$. Otherwise, if $w_n\wto 0$, dividing by $\|v_n\|^2$ the identity:
\[
\int_\Omega|\nabla v_n|^2=\int_\Omega v_n^4+\beta\int_\Omega u_n^2v_n^2,
\]
 we obtain:
\[\int_\Omega|\nabla w_n|^2=\int_\Omega\frac{v_n^4}{\|v_n\|^2}+\beta\int_\Omega u_n^2w_n^2\le C\|v_n\|^2+o(1).\]
Then $|\nabla w_n|_2=o(1)$, so $w_n\to0$, and this is a contradiction with $\|w_n\|=1$.
\end{proof}
We establish useful upper and lower estimates for the $L^2$-norms $|u|_2,|\nabla v|_2$ when $(u,v)$ belongs to a sublevel of the Nehari set.
\begin{lemma}\label{stime norme due}
Let $N\le3$, and let $\kappa>0$ be a positive number independent of $\beta$. For every $\beta<0$ with $|\beta|$ large enough, there exists $\delta>0$ independent of $\beta$ such that \[|u|_2,|\nabla v|_2\ge\delta,\ \ \text{$\forall$ $(u,v)\in\tildeNb\cap\{\mathcal J_\beta\le\kappa\}.$}\]
\end{lemma}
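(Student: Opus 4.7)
The plan is as follows. First, by Lemma~\ref{coercività sezione 5}, on $\tildeNb$ one has $\mathcal{J}_\beta(u,v) = \frac14\bigl(\|u\|_\lambda^2 + |\nabla v|_2^2\bigr)$, so the assumption $\mathcal J_\beta(u,v)\le\kappa$ together with $\lambda>0$ yields uniform upper bounds (independent of $\beta<0$) on $\|u\|_{H^1}$ and $|\nabla v|_2$; in particular $|u|_4$, $|u|_2$, $|\nabla u|_2$ are uniformly bounded above. Moreover, since $\beta<0$, the Nehari relation for $u$ combined with \eqref{stima basso} gives
\[
|u|_4^4 = \|u\|_\lambda^2 + |\beta|\int_\Omega u^2 v^2 \ge \|u\|_\lambda^2 \ge C_{S,\lambda}^2.
\]
The lower bound $|u|_2 \ge \delta_1$ then follows, for any $\beta<0$, from the Gagliardo--Nirenberg inequality $|u|_4^4 \le C\bigl(|\nabla u|_2^N |u|_2^{4-N} + |u|_2^4\bigr)$ valid for $N\le 3$: since $|\nabla u|_2$ is uniformly bounded and $4-N\ge 1$, letting $|u|_2 \to 0$ would force $|u|_4 \to 0$, contradicting the previous lower bound, so $\delta_1$ can be chosen depending only on $\lambda,\kappa,\Omega,N$, not on $\beta$.

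For the bound $|\nabla v|_2 \ge \delta_2$, which genuinely requires $|\beta|$ large, I would argue by contradiction: assume sequences $\beta_n \to -\infty$ and $(u_n,v_n)\in \mathcal{N}_{\beta_n}$ with $\mathcal J_{\beta_n}(u_n,v_n)\le\kappa$ and $\epsilon_n := |\nabla v_n|_2 \to 0$. The Nehari identity for $u_n$ gives
\[
|\beta_n|\int_\Omega u_n^2 v_n^2 = \int_\Omega u_n^4 - \|u_n\|_\lambda^2 \le C,
\]
so $\int_\Omega u_n^2 v_n^2 = O(1/|\beta_n|) \to 0$. Decomposing $v_n = \bar v_n + w_n$ with $\bar v_n := |\Omega|^{-1}\int_\Omega v_n$, Poincar\'e--Wirtinger and the Sobolev embedding (for $N\le 3$) yield $\|w_n\|_{H^1}\le C\epsilon_n$ and $|w_n|_q = O(\epsilon_n)$ for every $q\in[1,4]$. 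Expanding $\int_\Omega u_n^2 v_n^2 = \bar v_n^2|u_n|_2^2 + O(|\bar v_n|\epsilon_n) + O(\epsilon_n^2)$ and applying Young's inequality together with $|u_n|_2 \ge \delta_1$ gives $\bar v_n^2 = O(1/|\beta_n|) + O(\epsilon_n^2) \to 0$.

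Next, the $Y_{\beta_n}$-constraint $\int_\Omega v_n^3 = |\beta_n|\int_\Omega u_n^2 v_n$ expands, using $\int_\Omega w_n = 0$ and the bounds above, as
\[
\bar v_n\bigl(\bar v_n^2|\Omega| - |\beta_n|\,|u_n|_2^2\bigr) = O(|\beta_n|\epsilon_n) + O(|\bar v_n|\epsilon_n^2) + O(\epsilon_n^3).
\]
Since $\bar v_n^2 \to 0$ while $|\beta_n|\,|u_n|_2^2 \ge |\beta_n|\delta_1^2 \to +\infty$, the parenthesis has absolute value at least $|\beta_n|\delta_1^2/2$ for $n$ large, and dividing yields the sharp estimate $|\bar v_n| = O(\epsilon_n)$. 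Plugging this into $\int_\Omega v_n^4 = \bar v_n^4|\Omega| + O(\bar v_n^2 \epsilon_n^2) + O(|\bar v_n|\epsilon_n^3) + O(\epsilon_n^4)$ gives $\int_\Omega v_n^4 = O(\epsilon_n^4)$, while the Nehari identity for $v_n$ rewrites as
\[
\epsilon_n^2 + |\beta_n|\int_\Omega u_n^2 v_n^2 = \int_\Omega v_n^4;
\]
together, these yield $\epsilon_n^2 \le O(\epsilon_n^4)$, i.e.\ $1 = O(\epsilon_n^2)$, contradicting $\epsilon_n \to 0$.

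The main obstacle I expect is obtaining the sharp relation $|\bar v_n| = O(\epsilon_n)$: one must simultaneously exploit the $Y_\beta$-constraint, the uniform bound $|u|_2 \ge \delta_1$ (which is why the estimate on $|u|_2$ is carried out first), and a careful matching of terms of orders $\epsilon_n^k$ against $|\beta_n|\epsilon_n^k$. Only with this quantitative refinement, rather than the weaker qualitative statement $\bar v_n \to 0$, does the Nehari identity for $v$ close the argument by forcing the leading term $\bar v_n^4 |\Omega|$ of $\int_\Omega v_n^4$ down to order $\epsilon_n^4$.
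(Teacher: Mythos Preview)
Your proof is correct. For the lower bound on $|u|_2$ you and the paper argue identically, via interpolation (the paper uses $|u|_4^4 \le |u|_2^r|u|_{2^*}^{4-r}$, you use Gagliardo--Nirenberg; these are equivalent here) together with the uniform lower bound $|u|_4^4 \ge C_{S,\lambda}^2$ and the $H^1$-upper bound coming from $\mathcal J_\beta \le \kappa$. For the lower bound on $|\nabla v|_2$ the two approaches genuinely differ. The paper passes to a limit: since $|\nabla v_n|\to 0$ and $v_n$ is bounded, one has $v_n \rightharpoonup c$ for a constant $c$; sending $n\to\infty$ in the $Y_\beta$-constraint gives (for $c\ne 0$) the identity $|\beta| = c^2|\Omega|/|u|_2^2$, which is bounded independently of $\beta$ by the uniform bounds on $c$ and on $|u|_2\ge\delta_1$---a contradiction for $|\beta|$ large. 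You instead never pass to a limit: decomposing $v_n=\bar v_n+w_n$ and tracking the orders in $\epsilon_n$ through the $Y_\beta$-constraint yields the sharp estimate $|\bar v_n|=O(\epsilon_n)$, which via the second Nehari identity forces $\epsilon_n^2\le O(\epsilon_n^4)$. The paper's route is shorter and makes the threshold on $|\beta|$ transparent, but leaves the degenerate case $c=0$ implicit; your quantitative route is longer but fully self-contained and sidesteps that issue, which is exactly the obstacle you correctly anticipated.
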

\begin{proof} By Lemma \ref{coercività sezione 5}, there exists $C=C(\lambda,\kappa)$ such that
\begin{equation}\label{eq:unif_bound_stime norme due}
\|(u,v)\|\leq C \text{ for every } (u,v)\in\tildeNb\cap\{\mathcal J_\beta\le\kappa\}.
\end{equation}
Let $\{(u_n,v_n)\}\subset\mathcal N_\beta\cap\{\mathcal J_\beta\le\kappa\}$. Then, up to a subsequence we have

$u_n\wto u$ and $v_n\wto v$ weakly in $H^1(\Omega)$.\\
\textbf{Step 1}.  
    Since $\|u_n\|_\lambda\ge C_1>0$ by \eqref{stima basso},  we have:
\begin{equation}\label{stime basso norma 4}
    \int_\Omega u_n^4\ge\|u_n\|_\lambda^2\ge C_1>0.
\end{equation}
  
    Hence, $u\not\equiv0$ and $u_n\not\to 0$ in $L^2(\Omega)$. Additionally, since $\mathcal J_\beta\le\kappa$ and $N\le3$, then $|u|_{2^*}$ is bounded from above uniformly in $\beta$ and
        \[\int_\Omega u^4\le|u|_2^r\cdot|u|_{2^*}^{4-r},\ \ r=N\left(1-\frac{4}{2^*}\right).\]
    Hence, the $L^2$-norm is bounded from below uniformly in $\beta$, by using \eqref{stime basso norma 4}.\\
\textbf{Step 2}. Assume that $|\nabla v_n|\to0$ in $L^2(\Omega)$.
We already know that $u_n\to u$ in $L^2(\Omega), L^3(\Omega)$ and $u\not\equiv0$. In particular $v_n\wto c\in\R$ 
    in $H^1$ and $(u,c)\in Y_\beta$:
    \[c^2|\Omega|+\beta \int_\Omega u^2=0\iff|\beta|=\frac{c^2|\Omega|}{|u|_2^2}.\]
    From the uniform bound of $|u|_2^2$ (and the uniform upper bound \eqref{eq:unif_bound_stime norme due}, which yields $c\leq C$), we get a contradiction for $|\beta|$ large enough.   
\end{proof}

\begin{proposition}\label{punti regolari}
Let $N\le 3$ and $\kappa>0$. The set $\tildeNb$  is a regular $C^1-$submanifold of codimension three at each point $(u,v)\in\tildeNb\cap\{\mathcal J_\beta\le\kappa\}$  for $|\beta|$ large enough.

\end{proposition}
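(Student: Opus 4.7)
The plan is to show that the three defining functionals of $\tildeNb$ have linearly independent Fr\'echet differentials in $H'$; to this end, I will test them against three well-chosen directions and show that the resulting $3\times 3$ pairing matrix $M(u,v,\beta)$ has $|\det M|\to\infty$ as $|\beta|\to\infty$, uniformly on $\tildeNb\cap\{\mathcal J_\beta\le\kappa\}$. Write $G_1(u,v):=\int_\Omega v^3+\beta\int_\Omega u^2v$, $G_2(u,v):=\langle\mathcal J_\beta'(u,v),(u,0)\rangle$, $G_3(u,v):=\langle\mathcal J_\beta'(u,v),(0,v)\rangle$, so that $\tildeNb=\{u,v\not\equiv 0:\ G_1=G_2=G_3=0\}$; I will test $G_i'(u,v)$ against the directions $(u,0)$, $(0,v)$, and $(0,1)$.

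A direct computation using the constraint identities holding on $\tildeNb$ --- namely $\|u\|_\lambda^2=a+\beta c$, $|\nabla v|_2^2=b+\beta c$ and $\beta d=-e$, where I denote $a:=|u|_4^4$, $b:=|v|_4^4$, $c:=\int_\Omega u^2v^2$, $d:=\int_\Omega u^2v$, $e:=\int_\Omega v^3$, $f:=|u|_2^2$, $g:=|v|_2^2$ --- reveals after cofactor expansion along the first row that
\[
\det M = 4\bigl[(3g+\beta f)(ab-\beta^2c^2)-2e^2(\|u\|_\lambda^2+|\nabla v|_2^2)\bigr].
\]
By the coercivity of $\mathcal J_\beta$ on $\tildeNb$ (Lemma~\ref{coercività sezione 5}) combined with $\mathcal J_\beta\le\kappa$, the quantities $a,b,c,e,g,\|u\|_\lambda,|\nabla v|_2$ together with the products $\beta c$ and $\beta d=-e$ are all uniformly bounded; whereas by Lemma~\ref{stime norme due}, $f\ge\delta^2>0$, so $\beta f\to-\infty$ as $|\beta|\to\infty$. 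Hence the only term in $\det M$ that can grow with $|\beta|$ is the dominant product $(3g+\beta f)(ab-\beta^2 c^2)$, the remainder $2e^2(\|u\|_\lambda^2+|\nabla v|_2^2)$ remaining uniformly bounded.

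The main obstacle is ruling out the potential degeneracy $ab-\beta^2 c^2\to 0$, which could cancel the dominant contribution. This is handled cleanly by the Nehari factorization
\[
\beta^2c^2=(\|u\|_\lambda^2-a)(|\nabla v|_2^2-b)=(a-\|u\|_\lambda^2)(b-|\nabla v|_2^2),
\]
in which both factors are nonnegative since $\beta c\le 0$; expanding yields
\[
ab-\beta^2c^2 = a|\nabla v|_2^2+\|u\|_\lambda^2(b-|\nabla v|_2^2) \ \ge\ a|\nabla v|_2^2.
\]
From \eqref{stima basso} one has $a\ge C_{S,\lambda}^2>0$ independently of $\beta$, and Lemma~\ref{stime norme due} gives $|\nabla v|_2\ge\delta>0$ uniformly on $\tildeNb\cap\{\mathcal J_\beta\le\kappa\}$ for $|\beta|$ large. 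Hence $ab-\beta^2 c^2\ge C_{S,\lambda}^2\delta^2>0$ uniformly, and $|\det M|\to\infty$ uniformly as $|\beta|\to\infty$. This proves that $G_1'(u,v),G_2'(u,v),G_3'(u,v)$ are linearly independent in $H'$ for $|\beta|$ sufficiently large, so the map $G=(G_1,G_2,G_3)$ is a submersion at each $(u,v)\in\tildeNb\cap\{\mathcal J_\beta\le\kappa\}$, and $\tildeNb$ is a $C^1$-submanifold of codimension three at such points.
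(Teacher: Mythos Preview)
Your proof is correct and follows essentially the same approach as the paper: you compute the same $3\times 3$ Jacobian matrix (up to a harmless row permutation), obtain the identical determinant formula via cofactor expansion, and establish nondegeneracy from the same two ingredients---a uniform positive lower bound on the $2\times 2$ minor $ab-\beta^2c^2$ (the paper uses $\ge\|u\|_\lambda^2|\nabla v|_2^2$, you use the equally valid $\ge a|\nabla v|_2^2$) together with $3g+\beta f\to-\infty$ from Lemma~\ref{stime norme due}. One minor imprecision: the $\beta$-independent upper bound on $g=|v|_2^2$ does not follow directly from Lemma~\ref{coercività sezione 5} (whose constant $\kappa_1$ depends on $\beta$), but it does follow from the Nehari identities since $-\beta c=a-\|u\|_\lambda^2\le a\le C_{S,\lambda}^{-2}(4\kappa)^2$, whence $b=|\nabla v|_2^2-\beta c$ and then $g\le|\Omega|^{1/2}b^{1/2}$ are uniformly bounded.
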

\begin{proof}
 We see that $\tildeNb=\mathcal G^{-1}(0,0,0)$, where $\mathcal G=(\mathcal G_1,\mathcal G_2,\mathcal G_3)$ and 
 \[
 \mathcal G_1(u,v)=\|u\|_{\lambda}^2-\int_{\Omega}u^4-\beta\int_{\Omega}u^2v^2, \quad \mathcal G_2(u,v)=|\nabla v|_2^2-\int_{\Omega}v^4-\beta\int_{\Omega}u^2v^2,\quad \mathcal G_3(u,v)=\int_{\Omega}v^3+\beta\int_{\Omega}u^2v.\]
     We want to prove that $T=(T_1,T_2,T_3):=\mathcal G'(u,v)|_{\mathcal Z}:\mathcal Z\to\R^3$ is surjective with $\mathcal Z:=span\{(u,0),(0,v),(0,1)\}$. Consider the matrix:
    
\[
\mathcal T(u,v)=
\begin{bmatrix}
    &T_1(u,0)&T_1(0,v)&T_1(0,1)\\
    &T_2(u,0)&T_2(0,v)&T_2(0,1)\\
    &T_3(u,0)&T_3(0,v)&T_3(0,1)
\end{bmatrix}=
\begin{bmatrix}
    &-2\int_{\Omega}u^4 &-2\beta\int_{\Omega}u^2v^2 &2\int_{\Omega}v^3\\
    &-2\beta\int_{\Omega}u^2v^2 &-2\int_{\Omega}v^4&-2\int_{\Omega}v^3\\
    &-2\int_{\Omega}v^3&2\int_{\Omega}v^3&3\int_{\Omega}v^2+\beta\int_{\Omega}u^2
\end{bmatrix}.
\]
Computing the jacobian with the Laplace's expansion on the third column, and using $\mathcal{G}(u,v)=(0,0,0)$:
\begin{align*}
    \det(\mathcal T(u,v))&=-4\left(\int_{\Omega}v^3\right)^2\cdot2|\nabla v|_2^2-4\left(\int_{\Omega} v^3\right)^2\cdot2\|u\|_{\lambda}^2\\
    &\quad+(3|v|_2^2+\beta|u|_2^2)\det\underbrace{
    \begin{bmatrix}
    &-2\int_{\Omega}u^4 &-2\beta\int_{\Omega}u^2v^2\\
    &-2\beta\int_{\Omega}u^2v^2 &-2\int_{\Omega}v^4
    \end{bmatrix}}_{A}.
\end{align*}
According to Lemma \ref{stime norme due}, there exists $\delta>0$ such that $|u|_2,|\nabla v|_2\ge\delta$ and we have
  \begin{align*}
        \det A&=4\left[\left(\int_{\Omega}u^4\right)\left(\int_{\Omega}v^4\right)-\beta^2\left(\int_{\Omega}u^2v^2\right)^2\right]\\
        &=4\left[\left(-\beta\int_{\Omega}u^2v^2+\|u\|_{\lambda}^2\right)
        \left(-\beta\int_{\Omega}u^2v^2+|\nabla v|_{2}^2\right)-\beta^2\left(\int_{\Omega}u^2v^2\right)^2\right]\\
        &\ge4\|u\|_{\lambda}^2\cdot|\nabla v|_{2}^2>0.
    \end{align*}
Moreover, since $|u|_2\ge\delta$ and $|v|_2\le c$ (uniformly in $\beta$), there exists $C>0$ independent of $\beta$ such that
\begin{equation*}\label{bound condition}
    \frac{3|v|_2^2}{|u|_2^2}\le C,\ \ \text{$\forall$ $(u,v)\in\tildeNb\cap\{\mathcal J_\beta\le\kappa\}.$}
\end{equation*}
Thus,
\begin{align*}
3|v|_2^2+\beta|u|_2^2=|u|_2^2\left(\frac{3|v|_2^2}{|u|_2^2}+\beta\right)\le|u|_2^2\left(C+\beta\right)<0,
\end{align*}
if $|\beta|$ is large enough.
Hence, $\det(\mathcal T(u,v))\ne0$, which concludes the proof.
\end{proof}

\begin{proof}[Proof of Theorem \ref{main result 4}]

\noindent \textbf{Step 1.} If $N\le3$, then $m_\beta\le\kappa$, with $\kappa$  independent of $\beta$. Indeed, let $(u,v)\in H$ be such that $u,v\not\equiv0$ and $\text{supp}(u)\cap\text{supp}(v)=\emptyset$ and $\int_\Omega v^3=0$. We consider $(tu,sv)$ with 
    \begin{align*}
        &t^2=\frac{\|u\|_\lambda^2}{|u|_4^4}, &s^2=\frac{|\nabla v|_2^2}{|v|_4^4}.
    \end{align*}
    Hence, $\int_\Omega u^2v^2=0$, $(tu,sv)\in\mathcal N_\beta$ and
    \[
    m_\beta\le\mathcal J_\beta(tu,sv)=\frac14\left[\frac{\|u\|_\lambda^4}{|u|_4^4}+\frac{|\nabla v|_2^2}{|v|_4^4}\right].
    \]

\noindent \textbf{Step 2.}    Let $\{(u_n,v_n)\}$ be a minimizing sequence of $m_\beta$, then by Step 1 we have that
    \[
    \mathcal J_\beta(u_n,v_n)\le m_\beta+o(1)\le\kappa+o(1),
    \]
    where $\kappa$ is independent of $\beta$. The coercivity of $\mathcal J_\beta$ on $\mathcal N_\beta$, given by Lemma \ref{coercività sezione 5}, implies that $\{(u_n,v_n)\}$ is bounded in $H$.
  By Proposition \ref{punti regolari} and Ekeland's Variational Principle \cite{ekeland1974variational}, there exists $\{{\boldsymbol \Lambda}_n\}\subset\R^3$ such that
    \[\mathcal J_\beta'(u_n,v_n)-{\boldsymbol \Lambda}_n\cdot\mathcal G'(u_n,v_n)=o(1),\]
    where $\mathcal G=(\mathcal G_1,\mathcal G_2,\mathcal G_3)$ is defined in the proof of Proposition \ref{punti regolari}.
    According to Lemma \ref{stime norme due}, we have that there exists $\delta>0$ such that $|u_n|_2\ge\delta$, with $\delta>0$ uniform on $n$ and on $\beta$. Reasoning as in Proposition \ref{punti regolari}, we obtain 
    \[\det\mathcal T(u_n,v_n)\le4|u_n|_2^2(C+\beta)\|u_n\|_\lambda^2\cdot|\nabla v_n|_2^2,\]
for some constant $C$ independent of $\beta$ and  $n$. Furthermore,
  by  \eqref{stima basso} and Lemma \ref{stime norme due}, we obtain $\det\mathcal T(u_n,v_n)\le M(C+\beta)$ for a positive constant $ M$ independent of $\beta$ and $n$. Hence, for $|\beta|$ large enough, the matrix $\mathcal T$ is negative definite and there exists a uniform constant $c>0$ such that 
    \[
    \mathcal T(u_n,v_n)\boldsymbol{\Lambda}_n\cdot\boldsymbol{\Lambda}_n\le-c|\boldsymbol{\Lambda}_n|^2.
    \]
    Thus, we can conclude ${\boldsymbol{\Lambda}}_n=\boldsymbol {o(1)}$ as in Lemma \ref{Lemma 1.14}, and $\mathcal J_\beta'(u_n,v_n)=o(1)$.
    Finally, since $\{(u_n,v_n)\}$ is bounded, we can assume that $u_n\wto u$,  $v_n\wto v$ weakly in $H^1(\Omega)$ and the convergence is strong in $L^4(\Omega)$. Hence, 
    \[ o(1)=\langle\mathcal J_\beta'(u_n,v_n)-\mathcal J_\beta'(u,v),(u_n-u,v_n-v)\rangle=\|u_n\|_\lambda^2+|\nabla v_n|_2^2+o(1),\]
    which implies that $(u_n,v_n)\to(u,v)$ strongly in $H$ and that $(u,v)\in\mathcal N_\beta$ by Proposition \ref{chiusura sezione 5}. Since $(u,v)$ is a fully nontrivial critical point for $\mathcal J_\beta$ at level $m_\beta$, we have $m_\beta \geq l_\beta$ and actually we have equality (as $\mathcal{N}_\beta$ contains all fully nontrivial solutions).
        
Moreover, the constant solutions in \eqref{costanti non banali} are not well defined for $\lambda_1>0,\lambda_2=0,\beta<-1$.
\end{proof}
\section{Competitive and indefinite case for \texorpdfstring{$\lambda_1=\lambda_2\le0$}{lambda1=lambda2<=0}}\label{competitive new2}
In this section we assume that $\lambda:=\lambda_1=\lambda_2\le0$ and $\beta<0$, proving Theorems \ref{thm 5} and \ref{thm 6}.  We start with some preliminaries. \\
Let $X_\beta:=\{ (u,v) \in H : u \neq 0, v \neq 0, \mathcal J_\beta' (u,v)= 0 \}$ and $l_\beta=\inf_{X_\beta}\mathcal J_\beta$. The set $X_\beta$ is not empty  for $\beta\in(-1,0)$; indeed,  $(1+\beta)^{-1/2}(\omega,\omega)$ 
are solutions of \eqref{Pb},
where 
\[-\Delta\omega+\lambda\omega=\omega^3\ \ \text{in $H^1(\Omega)$}.\]
On the other hand, we will show  in subsection \ref{competitive NR} that $X_\beta\ne\emptyset$ also for $|\beta|$ large enough following \cite{noris2010existence, xu2022infinitely}. We exploit the assumption $\lambda_1=\lambda_2$ (i.e. the symmetry of the system) to construct a solution of \eqref{Pb}.

\begin{proposition}\label{closure Xb}
 Take $N\geq 4$ and $\beta<0$.     
The set $X_\beta$ is closed if one of the following conditions is satisfied: \begin{itemize}
    \item[(i)] \( |\beta| \) is sufficiently small, and either \( |\lambda| \) is not an eigenvalue or \( \lambda = 0 =\mu_1^N(\Omega) \), 
    \item[(ii)] \( |\beta| \) is sufficiently large and \( \lambda = 0 \).
\end{itemize}
\end{proposition}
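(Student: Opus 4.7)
The plan is to argue by contradiction. Take a sequence $\{(u_n,v_n)\}\subset X_\beta$ with $(u_n,v_n)\to(u,v)$ in $H$; the continuity of $\mathcal{J}_\beta'$ gives $\mathcal{J}_\beta'(u,v)=0$, so it remains to exclude that one component of the limit is identically zero. Since $\lambda_1=\lambda_2$, the problem is symmetric in the two components and I may assume $v\equiv0$. The main object is then the rescaled sequence $w_n:=v_n/\|v_n\|$, which is bounded in $H^1(\Omega)$ and, up to a subsequence, satisfies $w_n\rightharpoonup w$ weakly in $H^1$, with strong convergence in $L^p$ for every $p<4$.

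The first step is to show $w\not\equiv 0$. Dividing the Nehari identity $\|v_n\|_\lambda^2=|v_n|_4^4+\beta\int u_n^2 v_n^2$ by $\|v_n\|^2$ gives
\[
|\nabla w_n|_2^2+\lambda|w_n|_2^2=\|v_n\|^2|w_n|_4^4+\beta\int_\Omega u_n^2 w_n^2,
\]
and if $w\equiv 0$ both terms on the right vanish in the limit (the first because $\|v_n\|\to 0$; the second since $u_n^2\to u^2$ strongly in $L^2$ while $w_n^2\rightharpoonup 0$ weakly in $L^2$, by boundedness in $L^2$ together with $w_n\to 0$ in $L^p$ for $p<4$); combined with $|w_n|_2\to 0$ from compactness, this forces $|\nabla w_n|_2\to 0$, contradicting $\|w_n\|=1$. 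So $w\not\equiv 0$, and passing to the limit in the equation for $w_n$ yields
\[
-\Delta w+\lambda w=\beta u^2 w \text{ in }\Omega,\qquad \partial_\nu w=0\text{ on }\partial\Omega.
\]

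I would then split into two subcases. \emph{If $u\not\equiv 0$:} for $\lambda=0$ (the sub-condition of (i), and case (ii)), testing against $w$ gives $|\nabla w|_2^2=\beta\int u^2w^2\le 0$, hence $w\equiv c\ne 0$ is constant and $c^2\int u^2=0$ forces $u\equiv 0$, a contradiction; for $\lambda<0$ with $|\lambda|\notin\sigma(-\Delta_N)$ the operator $-\Delta+\lambda\colon H^1\to(H^1)'$ is invertible, so $w=\beta(-\Delta+\lambda)^{-1}(u^2 w)$ together with H\"older and Sobolev yields $\|w\|\le C|\beta|\|u\|^2\|w\|$, incompatible with $w\not\equiv 0$ for $|\beta|$ small enough. \emph{If $u\equiv 0$:} then $(u_n,v_n)\to(0,0)$ in $H$, and I also normalize $U_n:=u_n/\|u_n\|$. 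For $|\lambda|\notin\sigma(-\Delta_N)$, applying the invertibility of $-\Delta+\lambda$ to the rescaled equation for $U_n$ (whose right-hand side is $o(1)$ in $(H^1)'$) forces $\|U_n\|\to 0$, contradicting $\|U_n\|=1$. For $\lambda=0$, splitting $U_n$ into mean and mean-free parts and using Poincar\'e on the latter shows $U_n\to c$ and, likewise, $V_n:=v_n/\|v_n\|\to d$ strongly in $H^1$ with constants satisfying $|c|=|d|=|\Omega|^{-1/2}$; then integrating the two equations of \eqref{Pb} with Neumann BC, dividing by $\|u_n\|^3$ and $\|v_n\|^3$ respectively, and extracting $\|v_n\|/\|u_n\|\to\alpha\in[0,+\infty]$, the boundary cases $\alpha\in\{0,+\infty\}$ directly force $c=0$ or $d=0$, while $\alpha\in(0,+\infty)$ gives $c^2=-\beta\alpha^2 d^2$ and $d^2=-\beta\alpha^{-2} c^2$, whose product is $\beta^2=1$; this is incompatible with $|\beta|<1$ in (i) and with $|\beta|>1$ in (ii).

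The most delicate point in the plan is the operator-theoretic argument in the subcase $u\not\equiv 0$ of (i) with $\lambda<0$: the smallness threshold on $|\beta|$ must absorb the a priori bound on $\|u\|$, which is inherited from the bounded converging sequence, possibly supplemented by the Brezis--Kato/Moser $L^\infty$-regularity of Proposition~\ref{regularity} in order to phrase the argument as a spectral perturbation around the invertible operator $-\Delta+\lambda$.
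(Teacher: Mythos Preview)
Your overall strategy---normalize the vanishing component, show its weak limit $w$ is nontrivial, and derive a contradiction from the limiting linear equation---matches the paper's approach closely, and most subcases are handled correctly (in particular, your treatment of both components vanishing when $|\lambda|\notin\sigma(-\Delta_N)$, and your $\beta^2=1$ argument for $\lambda=0$, are essentially equivalent to the paper's spectral decomposition and H\"older arguments in Step~1).

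There is, however, a genuine gap in the subcase of (i) with $\lambda<0$, $|\lambda|$ not an eigenvalue, and $u\not\equiv 0$. Your spectral perturbation argument concludes $\|w\|\le C|\beta|\,\|u\|^2\|w\|$, which gives a contradiction only if $|\beta|<1/(C\|u\|^2)$. But $u$ here is the limit of the sequence, and it solves the scalar equation $-\Delta u+\lambda u=u^3$; this equation has infinitely many solutions with arbitrarily large $H^1$ (and $L^\infty$) norm, so there is no uniform bound on $\|u\|$ over all possible limits. Consequently the smallness threshold on $|\beta|$ that your argument requires depends on the particular sequence, which is circular: the threshold must be fixed \emph{before} the sequence is chosen. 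Your final paragraph flags this as ``the most delicate point'' and proposes to absorb the bound on $\|u\|$ via Brezis--Kato regularity, but Brezis--Kato only upgrades $H^1$ bounds to $L^\infty$ bounds and does not provide the uniform control you need. The paper sidesteps this issue by citing \cite{xu2025least} for precisely this subcase; your plan would need a genuinely different idea here (for instance, an argument that does not rely on smallness of the perturbation $\beta u^2$, or a reason why not every scalar solution can arise as such a limit).
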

\begin{proof}
 \textbf{Step 1}. Assume $u,v\equiv0$.\\ The first part of this step (up to \eqref{stima vett} below) follows the proof of \cite[Lemma 3.3]{clapp2022solutions}.
We consider the decomposition $H=H^+\oplus\tilde H=H^+\oplus H^-\oplus H^0$, with $l:=\dim H_i^-, k:=\dim H_i^0$ for $i=1,2$, as in section \ref{cooperative} and we write 
\[
u_n=u_n^++\tilde u_n=u_n^++u_n^-+u_n^0,\qquad v_n=v_n^++\tilde v_n=v_n^++v_n^-+v_n^0.
\] 
Since $\langle\mathcal J_\beta'(u_n,v_n),(u_n^+,0)\rangle=0$, we obtain that 
    \begin{equation}\label{stima u_n^+}
\begin{aligned}
   C_0 \|u_n^+\|^2\le B_1(u_n^+,u_n^+)&=\int_\Omega u_n^3u_n^++\beta\int_\Omega u_nu_n^+v_n^2 \le|u_n|_4^3\cdot|u_n^+|_4+|\beta|\cdot|u_nu_n^+|_2\cdot|v_n|_4^2\\
   &\le c\|u_n\|^3\cdot\|u_n^+\|+c|\beta|\cdot\|u_n\|\cdot\|u_n^+\|\cdot\|v_n\|^2\le \tilde c\|(u_n,v_n)\|^4,
\end{aligned}
\end{equation}
where $C_0=\mu_{l+k+1}+\lambda$>0.
Similarly, we get $\|u_n^-\|^2\le \tilde c_1\|(u_n,v_n)\|^4$ and also $\|v_n^+\|,\|v_n^-\|\le \tilde c_2\|(u_n,v_n)\|^4$. If $H^0=\{0\}$, i.e. $|\lambda|$ is not an eigenvalue, we conclude that 
\begin{equation}\label{stima vett}
    \|(u_n,v_n)\|^2\ge C_1,\ \text{ for some } C_1>0.
\end{equation}
Then \eqref{stima vett} is a contradiction with the assumption $u_n,v_n\to0$. 

Now, suppose that $H^0\ne\{0\}$, so that  $|\lambda|$  is an eigenvalue. We define $\rho_n:=\|(u_n,v_n)\|$ and $z_n:=\left(u_n/\rho_n,v_n/\rho_n\right)$.
Hence, $\|z_n\|=1$. By \eqref{stima u_n^+}, $\|z_n^+\|=\|u_n^+\|/\rho_n\le c\rho_n=o(1)$.
Thus $z_n^+\to0$, and we have a similar estimates for $\|z_n^-\|$.
Therefore, $z_n^0\to z^0\not\equiv0$. Since $z^0\in H^0$, there exist $\alpha,\gamma\in\R^k$ with 
\[
z^0=\left(\sum_{j=l+1}^{l+k}\alpha_j\varphi_j,\sum_{j=l+1}^{l+k}\gamma_j\varphi_j\right),
\]
where  $|\alpha|^2+|\gamma|^2=1$. Testing the equations in \eqref{Pb} with $z^0$, we obtain 
\begin{align*}
    0=\int_\Omega u_n^3z_1^0+\beta\int_\Omega u_nv_n^2z_1^0,\qquad 0=\int_\Omega v_n^3z_2^0+\beta\int_\Omega v_nu_n^2z_2^0.
\end{align*} Dividing by $\rho_n^3$, we obtain 
\begin{equation}\label{eq star}
\begin{aligned}
    0=\int_\Omega |z_1^0|^4+\beta\int_\Omega|z_1^0|^2|z_2^0|^2,\qquad 0=\int_\Omega|z_2^0|^4+\beta\int_\Omega|z_1^0|^2|z_2^0|^2.
\end{aligned}
\end{equation}
If either $z_1^0\equiv0$ or $z_2^0\equiv0$, we obtain $z^0\equiv(0,0)$, a contradiction. Hence,  $z_1^0,z_2^0\not\equiv0$. By H\"older's inequality, we infer that 
\begin{align*}\left(\int_\Omega |z_1^0|^4\right)\left(\int_\Omega|z_2^0|^4\right)&=\beta^2\left(\int_\Omega|z_1^0|^2|z_2^0|^2\right)^2\le\beta^2\left(\int_\Omega |z_1^0|^4\right)\left(\int_\Omega|z_2^0|^4\right),\end{align*} 
which is a contradiction for $\beta\in(-1,0)$. 

Let us now consider the case $|\beta|$ large.  
Since $\|z^0\|=1$, we have (up to a subsequence) that $z^0=:z_\beta^0\to z_\infty^0$ as $\beta\to-\infty$ and $\|z_\infty^0\|=1$. Now, if 
\[\lim_{\beta\to-\infty}\beta\int_\Omega|z_{\beta,1}^0|^2|z_{\beta,2}^0|^2=-\infty,\] we get a contradiction due to \eqref{eq star} . Otherwise,
\begin{equation}\label{eq star2}
\int_\Omega|z_{\infty,1}^0|^2|z_{\infty,2}^0|^2=\lim_{\beta\to-\infty}\int_\Omega|z_{\beta,1}^0|^2|z_{\beta,2}^0|^2=0.
\end{equation}
Therefore, since the eigenfunctions can not be zero in a open subset $\tilde\Omega\subset\Omega$ by the Unique Continuation Principle, we can assume that $z_{\infty,1}^0\equiv0$. 
Exploiting equations \eqref{eq star2} and \eqref{eq star}, we obtain the contradiction $z_\infty^0\equiv(0,0)$.\\
   \textbf{Step 2:}
   Assume that either $u\equiv0$ or $v\equiv0$. We focus on the case $u\equiv0$. In particular, there exists $\delta_\beta>0$ such that $\|v_n\|\ge\delta_\beta>0$. We define $z_n=u_n/\|u_n\|$ and $z$ the weak limit of $z_n$ in $H^1(\Omega)$. Assume that $z\equiv0$. Testing \eqref{Pb} with $(u_n,0)$ and dividing by $\|u_n\|^2$ we obtain:
    \[\int_\Omega|\nabla z_n|^2+\lambda\int_\Omega z_n^2\le\|u_n\|^{-2}\int_\Omega u_n^4\le C\|u_n\|^2.\]
    Thus $z_n\to0$, and this is not possible because $\|z_n\|=1$. Now, assume that $\lambda=0$.
    Since $\langle\mathcal J_\beta'(u_n,v_n),(1,0)\rangle=0$, we obtain
    \[
    0=\int_\Omega z_n^3+\frac{\beta}{\|u_n\|^2}\int_\Omega z_nv_n^2,
    \]
    and we get a contradiction passing to the limit as $n\to+\infty$.  This, also recalling Step 1, shows $(i)-(ii)$ in the case $\lambda=0$.
   Finally, assume that $|\lambda|$ is not an eigenvalue, this case follows by \cite[ proof of Theorem 1.3]{xu2025least}. 
\end{proof}

\subsection{Weakly competitive for \texorpdfstring{$\lambda:=\lambda_1=\lambda_2\le0$}{lambda1=lambda2<=0}}
Here, we show Theorem \ref{thm 5}.
We start proving compactness properties of the energy functional $\mathcal J_\beta$.

\begin{theorem}\label{compattezza w.c.}
    Let $\beta\in(-1,0)$ and let $N\le 4$. The following results hold
    \begin{itemize}
        \item[$(i)$] if $N\le3$, then the $(PS)_c$-condition is satisfied for every $c\in\R$,
        \item[$(ii)$] if $N=4$, then the $(PS)_c$-condition is satisfied for $c<L_\lambda+\frac{S^2}{8}$.
    \end{itemize}
\end{theorem}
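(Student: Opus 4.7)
The plan follows the Palais-Smale analysis carried out in Theorems \ref{PS condition beta>0} and \ref{Compactness beta<0}, with adjustments needed for the indefinite case $\lambda \le 0$. Let $\{(u_n, v_n)\}\subset H$ be a Palais-Smale sequence at level $c$. For boundedness, valid for every $N \le 4$, I would combine the two identities
\[
4\mathcal J_\beta(u_n, v_n) - 2\langle \mathcal J_\beta'(u_n, v_n), (u_n, v_n)\rangle = \int_\Omega(u_n^4 + 2\beta u_n^2 v_n^2 + v_n^4) \ge (1+\beta)(|u_n|_4^4 + |v_n|_4^4),
\]
(the inequality using $\beta > -1$) and $4\mathcal J_\beta(u_n, v_n) - \langle \mathcal J_\beta'(u_n, v_n), (u_n, v_n)\rangle = B_1(u_n, u_n) + B_2(v_n, v_n)$. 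The first bounds the $L^4$-norms by $C(1 + \|(u_n, v_n)\|)$; since $\Omega$ is bounded, H\"older's inequality $|u_n|_2 \le |\Omega|^{1/4}|u_n|_4$ transfers this to an $L^2$ bound, and then the second identity controls the gradient norms. Setting $s_n := \|(u_n, v_n)\|$, these assemble into an inequality of the form $s_n^2 \le C_1(1 + s_n) + C_2(1 + s_n)^{1/2}$, which forces $\{s_n\}$ bounded.

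For part $(i)$, since $N \le 3$ the embedding $H^1(\Omega) \hookrightarrow L^4(\Omega)$ is compact, so a subsequence of $(u_n, v_n)$ converges strongly in $L^4$ to the weak limit $(u, v)$. Passing to the limit in $\mathcal J_\beta'(u_n, v_n) \to 0$ yields $\mathcal J_\beta'(u, v) = 0$. Writing $\theta_n = u_n - u$ and $\sigma_n = v_n - v$, the identity $\langle \mathcal J_\beta'(u_n, v_n) - \mathcal J_\beta'(u, v), (\theta_n, \sigma_n)\rangle = o(1)$, together with the strong $L^4$-convergence that kills the quartic cross-terms, gives $B_1(\theta_n, \theta_n) + B_2(\sigma_n, \sigma_n) = o(1)$. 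Since $\theta_n, \sigma_n \to 0$ in $L^2$, the $\lambda$-contribution is absorbed and $|\nabla \theta_n|_2, |\nabla \sigma_n|_2 \to 0$, yielding strong convergence in $H$.

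For part $(ii)$, I would combine the Brezis-Lieb lemma, the expansion of $\int_\Omega u_n^2 v_n^2$ in equation \eqref{Jb'=0 eq2}, and Cherrier's inequality (Lemma \ref{Cherrier}). The weak limit satisfies $\mathcal J_\beta'(u, v) = 0$; subtracting its Nehari identities from those of the sequence produces $|\nabla \theta_n|_2^2 - \int_\Omega \theta_n^4 - \beta \int_\Omega \theta_n^2 \sigma_n^2 = o(1)$ and the analogous identity for $\sigma_n$. Since $\beta < 0$, one has $|\nabla \theta_n|_2^2 \le \int_\Omega \theta_n^4 + o(1)$, and Cherrier's inequality (exploiting $|\theta_n|_2 \to 0$) shows that whenever $\xi_1 := \lim |\nabla \theta_n|_2^2 > 0$ one must have $\xi_1 \ge S^2/2$; likewise for $\xi_2 := \lim |\nabla \sigma_n|_2^2$. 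Adding the two Nehari-type residual identities collapses the energy into $c = \mathcal J_\beta(u, v) + \frac{1}{4}(\xi_1 + \xi_2)$. A case analysis then closes the argument: if both $\xi_i > 0$, then $c \ge S^2/4 > L_\lambda + S^2/8$ by Lemma \ref{ground state scalar} (which gives $L_\lambda < S^2/8$); if only one concentrates, say $\xi_2 > 0$, and $u \not\equiv 0$, then $u$ solves the scalar equation with $\mathcal J_\beta(u, 0) = \Psi_\lambda(u) \ge L_\lambda$, whence $c \ge L_\lambda + S^2/8$. Each contradicts the hypothesis $c < L_\lambda + S^2/8$, so $\xi_1 = \xi_2 = 0$ and the convergence is strong.

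The main obstacle is the residual configuration in part $(ii)$ where $(u, v) = (0, 0)$ and exactly one $\xi_i$ is positive: the lower bound then drops to $S^2/8$, which does not by itself exceed the threshold $L_\lambda + S^2/8$. This pathology does not arise in the compactness argument of Theorem \ref{Compactness beta<0} because the Nehari constraint forces $|u_n|_4, |v_n|_4 \ge C > 0$ and both components must concentrate simultaneously; in our unconstrained setting this case must be ruled out through additional information on the sequence (a refined scalar profile decomposition, or the structure of the specific minimization from which the PS sequence arises), which is the delicate point of the proof.
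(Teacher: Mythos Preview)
Your approach matches the paper's line for line: bound the sequence via $\mathcal J_\beta(u_n,v_n)-\tfrac12\langle\mathcal J_\beta'(u_n,v_n),(u_n,v_n)\rangle=\tfrac14\int(u_n^4+2\beta u_n^2v_n^2+v_n^4)\ge\tfrac{1+\beta}4(|u_n|_4^4+|v_n|_4^4)$ (this is the paper's first displayed inequality), then control $L^2$- and gradient norms; for $N=4$, run the Brezis--Lieb/Cherrier residual analysis. The paper's proof of (ii) is literally the sentence ``the rest of the proof is the same of Theorem~\ref{Compactness beta<0}''.

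The obstacle you isolate --- weak limit $(u,v)=(0,0)$ with exactly one $\xi_i>0$ --- is a genuine gap, and the paper's deferral to Theorem~\ref{Compactness beta<0} does not close it. Step~1 there excludes $(u,v)=(0,0)$ by invoking the Nehari lower bound $|u_n|_4,|v_n|_4\ge C>0$ of Remark~\ref{Nehari property 1}, which forces \emph{both} $\xi_i\ge S^2/2$; for an unconstrained Palais--Smale sequence that bound is unavailable. In fact the semitrivial sequence $(u_n,0)$, with $u_n$ a boundary-concentrating scalar PS sequence at level $S^2/8$, is a $(PS)_{S^2/8}$-sequence for $\mathcal J_\beta$ with $S^2/8<L_\lambda+S^2/8$ that does not converge, so the threshold in~(ii) is too generous for arbitrary PS sequences. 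The statement becomes correct (and is only needed) for sequences drawn from $X_\beta$, as in the proof of Theorem~\ref{thm 5}; your suspicion that ``the structure of the specific minimization from which the PS sequence arises'' supplies the missing ingredient is exactly right.
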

\begin{proof}
    Let $\{(u_n,v_n)\}$ be a Palais-Smale sequence at level $c\in\R$. The item $(i)$ can be proved as in \cite[Lemma 3.1]{xu2025least}. Now, we focus on $(ii)$. Since $\beta\in(-1,0)$, we have:
    \[
    c+o(1)=\mathcal J_\beta(u_n,v_n)=\frac14\int_\Omega(u_n^4+2\beta u_n^2v_n^2+v_n^4)+o(1) \ge(\underbrace{1-|\beta|}_{>0})\int_\Omega (u_n^4+v_n^4)+o(1),
    \]
    which implies that there exists $C_1>0$ such that $|u_n|_4,|v_n|_4\le C_1$ and also $|u_n|_2,|v_n|_2\le C_1$. Moreover, 
    \[\frac14B_1(u_n,u_n)+\frac 14B_2(v_n,v_n)+o(1)=\frac14\int_\Omega (u_n^4+2\beta u_n^2v_n^2+v_n^4).\]
  Then
 $\|(u_n,v_n)\|\le C_2$, for some $C_2>0$. The rest of the proof is the same of Theorem \ref{Compactness beta<0}.
\end{proof}

\begin{proposition}\label{soglia w.c.}
    Let $N=4$, $\lambda\in\R$ and $|\beta|$  small enough, then $l_\beta=\inf_{X_\beta}\mathcal J_\beta<L_{\lambda}+\frac{S^2}{8}$.
\end{proposition}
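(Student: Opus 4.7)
The plan is to exhibit an explicit element of $X_\beta$ whose energy is small enough, and then take the limit $\beta\to0^-$. By Lemma \ref{ground state scalar}, the scalar problem \eqref{scalar eq} admits a least energy solution $\omega\in H^1(\Omega)$, and by the Nehari identity one has $|\nabla\omega|_2^2+\lambda|\omega|_2^2=|\omega|_4^4=4L_\lambda$. The crucial fact will be the strict inequality $L_\lambda<S^2/8$, also guaranteed by Lemma \ref{ground state scalar}.

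As remarked at the beginning of Section \ref{competitive new2}, for every $\beta\in(-1,0)$ the pair
\[
\boldsymbol{u}_\beta:=\left(\tfrac{\omega}{\sqrt{1+\beta}},\tfrac{\omega}{\sqrt{1+\beta}}\right)
\]
is a fully nontrivial solution of \eqref{Pb}, hence $\boldsymbol{u}_\beta\in X_\beta$. A direct computation, using $\lambda_1=\lambda_2=\lambda$ and the Nehari identity for $\omega$, will give
\[
\mathcal J_\beta(\boldsymbol{u}_\beta)=\frac{|\omega|_4^4}{1+\beta}-\frac{(1+\beta)}{2}\cdot\frac{|\omega|_4^4}{(1+\beta)^2}=\frac{|\omega|_4^4}{2(1+\beta)}=\frac{2L_\lambda}{1+\beta}.
\]
Consequently
\[
l_\beta\le\frac{2L_\lambda}{1+\beta}.
\]

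It remains to verify that this upper bound is strictly below $L_\lambda+S^2/8$ for $|\beta|$ small. A short algebraic manipulation will show that $\tfrac{2L_\lambda}{1+\beta}<L_\lambda+\tfrac{S^2}{8}$ is equivalent to $\beta>(L_\lambda-S^2/8)/(L_\lambda+S^2/8)$, whose right-hand side is strictly negative by Lemma \ref{ground state scalar}. Hence the inequality holds for all $\beta\in(-1,0)$ with $|\beta|$ sufficiently small, concluding the proof.

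There is no real obstacle: the argument reduces to an explicit energy calculation on the known symmetric solution, followed by a continuity-in-$\beta$ remark. The only point that must be kept in mind is that the strict inequality $L_\lambda<S^2/8$ (which embeds the information that concentration on a boundary point with positive mean curvature lowers the scalar ground state below the Sobolev threshold) is exactly what creates the room to accommodate the factor $2/(1+\beta)$ as $\beta\to0^-$.
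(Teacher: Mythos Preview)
Your proof is correct and follows essentially the same approach as the paper: both test $l_\beta$ with the symmetric solution $\left(\tfrac{\omega}{\sqrt{1+\beta}},\tfrac{\omega}{\sqrt{1+\beta}}\right)\in X_\beta$, obtain $l_\beta\le \tfrac{2L_\lambda}{1+\beta}$, and conclude via the strict inequality $L_\lambda<S^2/8$ from Lemma~\ref{ground state scalar}. The only difference is cosmetic: the paper passes to the limit $\beta\to0^-$ to get $2L_\lambda<L_\lambda+S^2/8$, whereas you solve the inequality explicitly to exhibit the threshold $\beta>(L_\lambda-S^2/8)/(L_\lambda+S^2/8)$.
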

\begin{proof}
Testing the energy level $l_\beta$, see \eqref{db}, with
    \[( u, v)=\left(\frac{1}{\sqrt{1+\beta}}\omega,\frac{1}{\sqrt{1+\beta}}\omega\right),\]
    we obtain
    \[l_\beta\le\frac{L_\lambda+L_\lambda}{1+\beta}=\frac{2L_\lambda}{1+\beta}.\]
       Thus $\lim_{\beta\to0^-}l_\beta\le 2L_\lambda<L_{\lambda}+\frac{S^2}{8}.$ Hence,
        we can find $\beta_1>0$ such that, for every $|\beta|<\beta_1$, $l_\beta<L_\lambda+\frac{S^2}{8}$.
\end{proof}
\begin{proof}[Proof of Theorem \ref{thm 5}]
Let $\{(u_n,v_n)\}\subset X_\beta$ be a minimizing sequence for $l_\beta$ (defined in \eqref{db}). Then $\{(u_n,v_n)\}$ is a Palais-Smale sequence at level $l_\beta$. By Proposition \ref{soglia w.c.} and Theorem \ref{compattezza w.c.}, we infer that $u_n\to u, v_n\to u$ strongly in $H^1(\Omega)$, up to a subsequence. 
 Moreover, since $\lambda_1=\lambda_2\le0$ and $\beta\in(-1,0)$ the fully non-trivial constant solutions in \eqref{costanti non banali} are not well defined, and the proof is complete.
\end{proof}

\subsection{Strongly  competitive case for \texorpdfstring{$\lambda_1=\lambda_2\le0$}{lambda1=lambda2<=0} and \texorpdfstring{$N\le3$}{N<=3}}\label{competitive NR}
Here, we prove Theorem \ref{thm 6}.
We adapt \cite{noris2010existence, xu2022infinitely} to the Neumann case.
We define the energy functional $J:H\to\R$,
\[J(u,v)=\frac12\|u\|^2+\frac12\|v\|^2+\frac\mu 2\int_\Omega(u_+)^2+\frac\mu 2\int_\Omega(v_+)^2-\frac14\int_\Omega(|u_+|^4+|v_+|^4+2\beta u^2v^2).\]
where $\mu:=\lambda-1$ and $u_+,v_+$ are the positive parts of $u,v$ respectively.
We use the minimax principle developed by Noris and Ramos in \cite{noris2010existence}.\\ We define $H_{k}=\text{span}\{\varphi_1,\dots,\varphi_{k}\}$ the subspace of dimension $k$ spanned by the first $k-$eigenfunctions of $-\Delta+\text{Id}$ with Neumann boundary conditions.
We define 
\[S_k:=\left\{(u,v)\in H\ :\ u-v\in H_{k}^{\perp},\ \|u-v\|=\rho_k\right\},\]
where $\rho_k>0$,  and we define $Q_k:=B_{R_k}\cap H_k$, where $R_k>\rho_k$. Finally, we set the  minimax class:
\[\Gamma_k:=\left\{\gamma\in C(Q_k;H)\ :\ \gamma(-u)=(\gamma_2(u),\gamma_1(u))\  in \ Q_k,\ \gamma(u)=(u_+,u_-)\ on\ \partial Q_k\right\},\]
\begin{equation}\label{ck}d_k:=\inf_{\gamma\in\Gamma_k}\max_{u\in Q_k}J(\gamma(u)).\end{equation}
It is standard to prove that  $(\Gamma_k,d_k)$ is an admissible minimax class and $d_k$ is a critical level when $J$ satisfies the $(PS)_{d_k}$-condition, see \cite[ Proposition 2.4]{noris2010existence}.  
Taking $\gamma(z)=(z_+,z_-)$, we deduce that 
\[ J(\gamma(z))=J(z_+,z_-)\le\frac{c_\mu}{2}\|z\|^2-\frac14\int_\Omega z^4<0,\ \ \forall\ z\in \partial Q_k,\]
with $R_k>0$ large enough and $c_\mu>0$ is a positive constant such that $c_\mu=1$ when $\mu\le0$.
\begin{lemma}[{\cite[Lemma 2.1]{noris2010existence} and \cite[Lemma 3.2]{xu2022infinitely}}]\label{geometry NR}
    For any $\alpha>0$ there exists $\rho_k$, $k_0\ge1$ and $\beta_0>1$ such that 
    \[
    \inf_{(u,v)\in S_k}J(u,v)\ge\alpha,\ \ \forall\ \ k\ge k_0,\quad |\beta|>\beta_0.
    \]
\end{lemma}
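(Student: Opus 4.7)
The argument will follow the Noris--Ramos/Xu linking construction, adapted to the present Neumann setting and with careful tracking of the dependence on $k$ and $\beta$. For $(u,v)\in S_k$, the constraint $u-v\in H_k^\perp$ forces $\pi_k u=\pi_k v$, so I would write
\[
u=w+z_1,\qquad v=w+z_2,\qquad w\in H_k,\qquad z_1,z_2\in H_k^\perp,\qquad \|z_1-z_2\|=\rho_k.
\]
Orthogonality of the eigenfunctions in $H^1$ gives the parallelogram-type identity $\|u\|^2+\|v\|^2=2\|w\|^2+\|z_1\|^2+\|z_2\|^2\geq 2\|w\|^2+\tfrac{1}{2}\rho_k^2$. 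On $H_k^\perp$, the spectral bound $|z|_2^2\leq (\mu_{k+1}+1)^{-1}\|z\|^2$ combined with Gagliardo--Nirenberg interpolation (valid since $N\leq 3$) produces a constant $\varepsilon_k\to 0$ such that $|z|_4\leq \varepsilon_k\|z\|$ on $H_k^\perp$.

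Next, I would expand the quartic and coupling terms. The factorisation $(w+z_1)(w+z_2)=w^2+w(z_1+z_2)+z_1z_2$ together with Young's inequality yields
\[
|uv|_2^2\geq \tfrac{1}{2}|w|_4^4 - C\bigl[|w|_4^2(|z_1|_4^2+|z_2|_4^2)+|z_1|_4^2|z_2|_4^2\bigr],
\]
while $|u_+|_4^4+|v_+|_4^4\leq 8(|w|_4^4+|z_1|_4^4+|z_2|_4^4)$. Combining these with $|z_i|_4\leq \varepsilon_k\|z_i\|$ and taking $|\beta|\geq \beta_0\geq 8$ gives
\[
\tfrac{|\beta|}{2}|uv|_2^2-\tfrac{1}{4}\bigl(|u_+|_4^4+|v_+|_4^4\bigr)\geq \bigl(\tfrac{|\beta|}{4}-2\bigr)|w|_4^4-\eta_k\bigl(\|w\|^2+\|z_1\|^2+\|z_2\|^2\bigr),
\]
with $\eta_k\to 0$ as $k\to\infty$.

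The remaining indefinite term $(\mu/2)(|u_+|_2^2+|v_+|_2^2)$ with $\mu=\lambda-1\leq -1$ would be controlled by splitting $w=w_{\mathrm{low}}+w_{\mathrm{high}}$ along the eigenbasis according to whether $\mu_j<|\lambda|$: only the $k$-independent finite-dimensional low-frequency piece $w_{\mathrm{low}}$ can contribute negatively, and that negative contribution is absorbed into $(\tfrac{|\beta|}{4}-2)|w|_4^4$ via the $k$-independent norm equivalence on the fixed subspace $\pi_J H$ (with $J$ determined by $\mu_J<|\lambda|<\mu_{J+1}$). Assembling everything would give
\[
J(u,v)\geq \tfrac{1}{4}\rho_k^2-C(|\lambda|)/c_\beta-o_k(1)\bigl(\|w\|^2+\|z_1\|^2+\|z_2\|^2\bigr),
\]
after which I would successively choose $\beta_0$ large (so $C(|\lambda|)/c_\beta\leq \alpha/2$), $k_0$ large (so the $o_k(1)$ terms are absorbed and the coefficients of $\|z_i\|^2$ remain non-negative), and $\rho_k$ with $\rho_k^2\geq 8\alpha$. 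The main obstacle is the interaction between the finite-dimensional $w$-component, where Sobolev constants may deteriorate with $k$, and the orthogonal $z_i$: the argument works precisely because the negative part of the indefinite quadratic form affects only a $k$-independent finite-dimensional subspace, while the decay $\varepsilon_k\to 0$ on $H_k^\perp$ is strong enough to absorb all mixed terms uniformly in $k$.
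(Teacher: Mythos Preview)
Your decomposition $u=w+z_1$, $v=w+z_2$ with $w\in H_k$ loses the structural feature that makes the Noris--Ramos argument work. The gap is in your displayed inequality for the combined quartic terms: after bounding $|u_+|_4^4+|v_+|_4^4\le 8(|w|_4^4+|z_1|_4^4+|z_2|_4^4)$ and expanding $|uv|_2^2$, the error terms you actually produce are of the form $|\beta|\,\varepsilon_k^2\,|w|_4^2\|z_i\|^2$, $|\beta|\,\varepsilon_k^4\|z_1\|^2\|z_2\|^2$ and $\varepsilon_k^4\|z_i\|^4$ --- all \emph{quartic} in $(\|w\|,\|z_1\|,\|z_2\|)$, not of the quadratic form $\eta_k(\|w\|^2+\|z_1\|^2+\|z_2\|^2)$ with $\eta_k\to 0$ that you claim. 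On $S_k$ only $\|z_1-z_2\|=\rho_k$ is prescribed, while $\|z_1\|,\|z_2\|,\|w\|$ are free; the positive quadratic contribution $\tfrac12\|z_i\|^2$ cannot dominate a negative $\varepsilon_k^4\|z_i\|^4$, so your lower bound breaks down as $\|z_i\|\to\infty$.

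The argument in the cited references avoids this by \emph{not} expanding the quartic terms along $H_k\oplus H_k^\perp$. One uses the pointwise identity $u^4+v^4=2u^2v^2+(u-v)^2(u+v)^2$ directly, obtaining
\[
-\tfrac14\bigl(|u_+|_4^4+|v_+|_4^4\bigr)+\tfrac{|\beta|}{2}|uv|_2^2\;\ge\;\tfrac{|\beta|-1}{2}|uv|_2^2-\tfrac14|u-v|_4^2\,|u+v|_4^2,
\]
and then $|u-v|_4^2\le \varepsilon_k^2\rho_k^2$ (since $u-v\in H_k^\perp$) together with Sobolev on $|u+v|_4^2$ gives an error $C\varepsilon_k^2\rho_k^2(\|u\|^2+\|v\|^2)$ that is \emph{quadratic} in $(u,v)$ with small coefficient, hence absorbable into $\tfrac12(\|u\|^2+\|v\|^2)$ after choosing $\rho_k$ suitably. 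Your treatment of the indefinite term $(\mu/2)(|u_+|_2^2+|v_+|_2^2)$ via a fixed low-frequency space is the right idea, and it meshes with this estimate: for $k>\dim\tilde H$ one has $\pi_{\tilde H}u=\pi_{\tilde H}v$, and the negative contribution on that fixed finite-dimensional space is then controlled by the coupling term $\tfrac{|\beta|-1}{2}|uv|_2^2$ for $|\beta|$ large. The point is that this last step must be carried out \emph{after} the quartic terms have been reduced via the identity above, not after a full $H_k$-expansion.
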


As a consequence, for each $\alpha>0$, $d_k\ge\alpha$ for $k$ large enough and $d_k\to+\infty$, see also \cite[Lemma 2.2]{noris2010existence}.
\begin{proposition}[{\cite[Lemma 3.5]{xu2022infinitely}}]\label{PS condition NR}
    Assume that $\beta\ne-1$ and $c>0$ Then
    \begin{itemize}
        \item[$(i)$] any Palais-Smale sequence of $J$ at level $c$ is bounded;
        \item[$(ii)$] the functional $J$ satisfies the $(PS)_c$ condition.
    \end{itemize}
    The same holds also for the functional $\mathcal J_\beta$.
\end{proposition}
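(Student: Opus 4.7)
My plan is to follow the standard two-step procedure: first prove boundedness of Palais--Smale sequences, then derive strong convergence via subcritical compactness. The compact embedding $H^1(\Omega)\hookrightarrow L^p(\Omega)$ for $p<2^*$ (available under the standing assumption $N\le 3$) is the key analytic tool, and the scheme applies to both $J$ and $\mathcal J_\beta$ in parallel; the only difference is that $J$ uses positive parts in the nonlinearity and has a slightly shifted quadratic form. I describe the argument for $J$, the modifications for $\mathcal J_\beta$ being straightforward (replace $\|u\|^2+\mu|u_+|_2^2$ by $\|u\|_\lambda^2$ and $|u_+|_4^4$ by $|u|_4^4$).

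\emph{Step 1 (boundedness).} For a $(PS)_c$ sequence $(u_n,v_n)$, combining $J(u_n,v_n)=c+o(1)$ with $\langle J'(u_n,v_n),(u_n,v_n)\rangle=o(\|(u_n,v_n)\|_H)$ gives the two identities
\begin{align*}
\|u_n\|^2+\|v_n\|^2+\mu(|u_{n,+}|_2^2+|v_{n,+}|_2^2) &= 4c+o(\|(u_n,v_n)\|_H),\\
|u_{n,+}|_4^4+|v_{n,+}|_4^4+2\beta\!\int_\Omega u_n^2v_n^2 &= 4c+o(\|(u_n,v_n)\|_H).
\end{align*}
If $\mu\ge 0$ the first identity immediately bounds $\|(u_n,v_n)\|_H$. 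Otherwise argue by contradiction: set $t_n=\|(u_n,v_n)\|_H\to\infty$ and $(\hat u_n,\hat v_n)=(u_n,v_n)/t_n$. The compact Sobolev embedding into $L^4$ and the weak lower semicontinuity of the norm produce a limit $(\hat u_\infty,\hat v_\infty)$ with $\|\hat u_\infty\|^2+\|\hat v_\infty\|^2\le 1$; dividing the two identities by $t_n^2$ and $t_n^4$ respectively gives
\[
1+\mu(|\hat u_{\infty,+}|_2^2+|\hat v_{\infty,+}|_2^2)=0,\qquad |\hat u_{\infty,+}|_4^4+|\hat v_{\infty,+}|_4^4+2\beta\!\int_\Omega \hat u_\infty^2\hat v_\infty^2=0.
\]
For $\beta\in(-1,0)$, testing $\langle J'(u_n,v_n),(-u_{n,-},0)\rangle$ gives $\|u_{n,-}\|^2+|\beta|\!\int u_{n,-}^2v_n^2=o(\|u_{n,-}\|)$, so $u_{n,-},v_{n,-}\to 0$ in $H^1$ and $\hat u_\infty,\hat v_\infty\ge 0$; combining with $2\int\hat u_\infty^2\hat v_\infty^2\le |\hat u_\infty|_4^4+|\hat v_\infty|_4^4$ and $1+\beta>0$ forces $\hat u_\infty\equiv\hat v_\infty\equiv 0$ in $L^4$, contradicting the first limit identity. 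For $\beta\ge 0$ the same conclusion follows directly since all three terms in the second identity are non-negative. The delicate case is $\beta<-1$: the second identity is no longer coercive, and one combines both limit identities with the saturated Cauchy--Schwarz case $\hat u_\infty\propto\hat v_\infty$ (the only nontrivial admissible configuration), the constraint $\|(\hat u_\infty,\hat v_\infty)\|_H\le 1$, and the hypothesis $c>0$ to reach a contradiction. The boundary value $\beta=-1$ is genuinely excluded, as there the quadratic form becomes a perfect square and the above scheme breaks down.

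\emph{Step 2 ($(PS)_c$ condition).} Given boundedness, extract $(u_n,v_n)\rightharpoonup(u,v)$ weakly in $H$; subcritical compactness yields strong $L^4$-convergence, so all nonlinear terms pass to the limit and $J'(u,v)=0$. Testing $\langle J'(u_n,v_n)-J'(u,v),(u_n-u,v_n-v)\rangle=o(1)$ and cancelling the nonlinear contribution via $L^4$-strong convergence reduces the identity to $\|u_n-u\|^2+\|v_n-v\|^2=o(1)$, giving the desired strong convergence in $H$.

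\emph{Main obstacle.} The technical heart of the argument is boundedness in the strongly competitive regime $\beta<-1$: the quadratic form $|u|_4^4+|v|_4^4+2\beta\!\int u^2v^2$ is indefinite, so AM--GM alone does not furnish a contradiction and one must carefully combine the limiting algebraic constraints with auxiliary Nehari-type relations (obtained by testing $J'$ against $(u_{n,\pm},0)$ and $(0,v_{n,\pm})$) and the hypothesis $c>0$ to rule out all admissible limiting profiles. The compactness of $H^1\hookrightarrow L^4$ valid only for $N\le 3$ is used decisively here, justifying the dimensional restriction.
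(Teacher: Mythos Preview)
The paper does not supply a proof of this proposition; it simply cites \cite[Lemma~3.5]{xu2022infinitely}. Your outline is correct for Step~2 (subcritical compactness) and for the boundedness when $|\beta|<1$ or $\beta>0$: there the pointwise lower bound $u^4+2\beta u^2v^2+v^4\ge c_\beta(u^4+v^4)$, $c_\beta>0$, together with the identity $4c+o(\|(u_n,v_n)\|)=\|u_n\|^2+\|v_n\|^2+\mu(|u_{n,+}|_2^2+|v_{n,+}|_2^2)$, closes the argument cleanly. The observation that $\langle J'(u_n,v_n),(-u_{n,-},0)\rangle=\|u_{n,-}\|^2+|\beta|\int u_{n,-}^2v_n^2=o(\|u_{n,-}\|)$ for $\beta<0$, forcing $u_{n,-},v_{n,-}\to 0$, is also correct and in fact works for \emph{all} $\beta<0$, not only $\beta\in(-1,0)$.

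However, for $\beta<-1$ --- which is precisely the regime of Section~\ref{competitive NR} --- your argument has a genuine gap. From the integrated limits $|\hat u_\infty|_4^4=|\hat v_\infty|_4^4=|\beta|\!\int \hat u_\infty^2\hat v_\infty^2$ (obtained by testing with $(u_n,0)$, $(0,v_n)$ and dividing by $t_n^4$) one cannot conclude that $\hat u_\infty\propto\hat v_\infty$; these relations admit many nontrivial pairs, and the hypothesis $c>0$ plays no role after dividing by $t_n^2$ or $t_n^4$. The correct way to close the gap is to test $J'(u_n,v_n)$ against an \emph{arbitrary} $(\phi,0)$ and divide by $t_n^3$: passing to the limit gives the \emph{pointwise} relation $\hat u_\infty(\hat u_\infty^2-|\beta|\hat v_\infty^2)=0$ a.e., and symmetrically $\hat v_\infty(\hat v_\infty^2-|\beta|\hat u_\infty^2)=0$ a.e. At any point where both are nonzero this forces $|\beta|^2=1$, impossible since $\beta\neq -1$; hence $\hat u_\infty\hat v_\infty\equiv 0$, so $\int\hat u_\infty^2\hat v_\infty^2=0$, and the integrated relations then give $\hat u_\infty\equiv\hat v_\infty\equiv 0$, contradicting $1+\mu(|\hat u_\infty|_2^2+|\hat v_\infty|_2^2)=0$. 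The same pointwise argument handles $\mathcal J_\beta$ without any reference to positive parts.
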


\begin{proof}[Proof of Theorem \ref{thm 6}]
Fix $\alpha>0$. Let $d_k$ be the critical level defined in \eqref{ck} and choose $k\ge1$ large enough such that $d_k\ge\alpha>0$ and $d_k\to+\infty$ by Lemma \ref{geometry NR}. Thus,  Proposition \ref{PS condition NR} implies the existence of a critical point $(u_k,v_k)$ of $J$ at level $d_k$. Moreover, $(u_k,v_k)$ is not constant, otherwise $u_k^2=v_k^2\equiv\frac{\lambda}{1+\beta}$ by \eqref{costanti non banali} and 
   \[
   0<\alpha\le J(u_k,v_k)=\frac{\lambda^2}{2(1+\beta)}\le0,
   \]
   which is a contradiction. We recall also that any critical point of $J$ is a weak solution of \eqref{Pb}, see \cite[Lemma 4.1]{dancer2010priori}.
   Hence, $(i)$ is proved  
    and we have also that
 \[  X_\beta=\{ (u,v) \in H : u \neq 0, v \neq 0, \mathcal J_\beta' (u,v)= 0 \}\ne\emptyset.\]
    Let $\{(u_n,v_n)\}\subset X_\beta$ 
    be such that $\mathcal J_\beta(u_n,v_n)=l_\beta+o(1)$. Proposition \ref{PS condition NR} implies also that $\mathcal J_\beta$ satisfies the $(PS)_{l_\beta}$-condition and that there exists $(u,v)\in H$ such that $u_n\to u,v_n\to v$ in $H^1(\Omega)$. According to Proposition \ref{closure Xb}-$(ii)$,  $u,v\not\equiv0$ for $\lambda=0$, thus $(ii)$ holds.
    \end{proof}

\begin{remark}
When $\lambda<0$ and $\beta<-1$, one has
\[
l_\beta \;\le\; \mathcal J_\beta(c_1,c_2) \;=\; \frac{\lambda^2}{2(1+\beta)} \;<\;0\;<\;L_\lambda,
\]
so in particular $l_\beta$ is attained. However, in this situation we cannot conclude whether the least energy solution is constant or not. Moreover, this argument works only under Neumann boundary conditions.
\end{remark}

\section*{Acknowledgments} 
Delia Schiera and Hugo Tavares are partially supported by the Portuguese government through FCT- Funda\c c\~ao para a Ci\^encia e a Tecnologia, I.P., under the projects UIDB/04459/2020 with DOI identifier 10-54499/UIDP/04459/2020 (CAMGSD).

Delia Schiera is also partially supported by
FCT, I.P., under the Scientific Employment Stimulus - Individual
Call (CEEC Individual), with DOI identifier 10.54499/2020.02540.CEECIND/CP1587/CT0008, and by GNAMPA (Gruppo Nazionale per l'Analisi, Probabilit\`a e le loro Applicazioni) -- INdAM (Istituto Nazionale di Alta Matematica), through the project `Critical and limiting phenomena in nonlinear elliptic systems', CUP E5324001950001. 

The author Simone Mauro was supported by a PhD scholarship (granted by the University of Calabria) and thanks Instituto Superior Técnico, Universidade de Lisboa, for the hospitality during the visiting period in which this work was developed.

\noindent\textbf{Simone Mauro}\\
 Dipartimento di Matematica e Informatica, \\
 Universit\`a della Calabria\\
 Ponte Pietro Bucci cubo 31B\\
 87036 Arcavacata di Rende, Cosenza, Italy\\
\texttt{simone.mauro@unical.it}

\bigskip

\noindent\textbf{Delia Schiera and Hugo Tavares}\\
CAMGSD - Centro de An\'alise Matem\'atica, Geometria e Sistemas Din\^amicos\\
Departamento de Matem\'atica do Instituto Superior T\'ecnico\\
Universidade de Lisboa\\
1049-001 Lisboa, Portugal\\
\texttt{hugo.n.tavares@tecnico.ulisboa.pt}

\end{document}